\definecolor{rouge}{rgb}{0.85,0.1,.4}
\definecolor{bleu}{rgb}{0.1,0.2,0.9}
\definecolor{violet}{rgb}{0.7,0,0.8}
\newcommand{\nc}{\newcommand}
\nc{\We}[2]{\mathbb{V}^{#1}_{#2}}
\nc{\Si}[2]{\mathbb{L}^{#1}_{#2}}
\nc{\mi}{\varphi}
\nc{\ppart}{(\!(t)\!)}
\nc{\al}{\alpha}
\nc{\ka}{\kappa}
\nc{\LP}{{}^L\neg P}
\nc{\n}{{\mathfrak n}}
\nc{\ghat}{\wh{\g}}
\def\neg{\negthinspace}
\newcommand{\wh}{\widehat}
\newcommand{\mc}{\mathcal}
\newcommand{\mf}{\mathfrak}
\newcommand{\mb}{\mathbb}
\newcommand{\g}{\mf{g}}
\renewcommand{\aa}{\mf{a}}
\newcommand{\ga}{\mf{g}^\natural}
\newcommand{\h}{\mf{h}}
\newcommand{\affg}{\widehat{\mf{g}}}
\newcommand{\Z}{\mathbb{Z}}
\newcommand{\C}{\mathbb{C}}
\newcommand{\Q}{\mathbb{Q}}
\newcommand{\W}{\mathscr{W}}
\newcommand{\lam}{\lambda}
\def\leq{\leqslant}
\def\geq{\geqslant}
\DeclareMathOperator{\Hom}{Hom}
\DeclareMathOperator{\Com}{Com}
\def\commentPM#1{{\color{magenta}Pierluigi:#1}}
\theoremstyle{theorem}
\newtheorem{theorem}{Theorem}[section]
\newtheorem{Pro}[theorem]{Proposition}
\newtheorem{Lem}[theorem]{Lemma}
\newtheorem{lemma}[theorem]{Lemma}
\newtheorem{criterion}[theorem]{Criterion}
\newtheorem{Co}[theorem]{Corollary}
\theoremstyle{remark}
\newtheorem{Rem}[theorem]{Remark}
\newtheorem{Conj}{Conjecture}
\newtheorem{Ex}[theorem]{Example}
\newcommand{\bs}{\boldsymbol}
\title{$\W$-algebras as conformal extensions of 
affine VOAs}
\author[Adamovi\'c]{Dra{\v z}en~Adamovi\'c} 
\address[Dra{\v z}en~Adamovi\'c]{Department of Mathematics, Faculty of science, University of Zagreb, Bijeni\v cka 30, 10 000 Zagreb, Croatia}
\email{adamovic@math.hr}
\author[Arakawa]{Tomoyuki Arakawa}
\address[Tomoyuki Arakawa]{Okinawa Institute of
Science and Technology (OIST),
1919-1 Tancha, Onna-son, Kunigami-gun, Okinawa, 904-0495, JAPAN}
\email{tomoyuki.arakawa@oist.jp}
\author[Creutzig]{Thomas Creutzig}
\address[Thomas Creutzig]{Department Mathematik, Algebra und Geometrie,
Friedrich-Alexander Universit\"at Erlangen}
\email{thomas.creutzig@fau.de}
\author[Linshaw]{Andrew R. Linshaw}
\address[Andrew R. Linshaw]{Department of Mathematics, 
University of Denver, 
2390 S.~York St., Denver, CO 80208 USA}
\email{andrew.linshaw@du.edu}
\author[Moreau]{Anne Moreau}
\address[Anne Moreau]{Universit\'{e} Paris-Saclay, CNRS, Laboratoire de Math\'{e}matiques d'Orsay, 
Rue Michel Magat, B\^{a}t. 307, 
91405 Orsay, France}
\email{anne.moreau@universite-paris-saclay.fr} 
\author[M\"oseneder]{Pierluigi M\"oseneder Frajria}
\address[Pierluigi M\"oseneder Frajria]{Politecnico di Milano, Polo regionale di Como,  Via Anzani 42, 22100, Como, Italy}
\email{pierluigi.moseneder@polimi.it}
\author[Papi]{Paolo Papi}
\address[Paolo Papi]{Sapienza Universit\`a di Roma, 
Piazzale Aldo Moro 2, 
00185 Roma, Italy}
\email{papi@mat.uniroma1.it}
\begin{document}
\begin{abstract}
We provide a criterion for a vertex operator superalgebra homomorphism from an affine vertex algebra to another vertex superalgebra to be conformal, and an additional criterion that guarantees that this homomorphism is surjective. 

This situation is applied to $\W$-algebras and $\W$-superalgebras and we list all cases where our criterion applies. This gives many new examples of $\W$-algebras that collapse to affine vertex algebras or are conformal extensions. In particular, we provide many examples of simple $\W$-algebras at non-admissible levels that collapse to admissible level affine vertex algebras. 
\end{abstract}

\maketitle

 \section{Introduction}
 Let $\g=\g_{\bar 0}\oplus\g_{\bar 1}$ be a basic classical Lie superalgebra, i.e., a finite-dimensional simple Lie superalgebra with reductive even part, endowed with an even invariant non-degenerate symmetric bilinear form.
Let $\W^k(\g,f)$ be the $\W$-algebra 
associated with $\g$ 
and with a nilpotent element $f\in \g_{\bar 0}$ at level $k$ \cite{FF90,KacRoaWak03}. 
Its simple quotient is denoted by $\W_k(\g,f)$ and understanding 
these simple quotients is in general an interesting and difficult question. 
Sometimes these simple quotients can be described via conformal embeddings  
of vertex algebras that are better understood. 
Here
an embedding $\iota \colon V \hookrightarrow W$ 
of a vertex operator algebra $(V,\omega^V)$ 
into a vertex operator algebra $(W,\omega^W)$ 
is called {\em conformal} if the 
conformal vector is mapped to the 
conformal vector: $\iota(\omega^V) = \omega^W$. 
The notion is very natural, and was 
studied by part of the authors and their collaborators 
in several works \cite{AP13,AKMPP17,AKMPP18,AKMPP20,AMP23} in the case 
where $V,W$ are affine vertex (super)algebras, 
or when $W$ is an affine $\W$-algebra. We continue these studies.


The notion of collapsing level is specific to $\W$-algebras. 
The level $k$ is called {\em collapsing} 
if the simple quotient $\W_k(\g,f)$ of $\W^k(\g,f)$ 
is isomorphic to its affine vertex algebra $L_{k^\natural}
(\g^\natural)$, where $\g^\natural$ is the centralizer 
of an $\mf{sl}_2$-triple $(e,h,f)$ in~$\g$, 
$L_{k^\natural}
(\g^\natural)$ the simple quotient of the universal affine 
vertex algebra $V^{k^\natural}(\g^\natural)$, 
and $k^\natural$ a level entirely 
determined by the condition that 
$V^{k^\natural}(\g^\natural)$ is a vertex subalgebra of $\W^k(\g,f)$. 
See~\Cref{Sec:conditions} and~\Cref{Sec:W-algebras} 
for more details. 
The study of collapsing levels was initiated in \cite{AKMPP17,AKMPP18} 
where the authors were able to classify all 
such levels for minimal $\W$-algebras (that is, 
$f$ lies in the minimal nilpotent orbit), 
including the supercase. 
Recall that a minimal $\W$-algebra has a minimal strong generating set given by elements in conformal weight $1$ and $\frac{3}{2}$ together with the Virasoro element. 
In this case an explicit $\lambda$-bracket formula for the weight $\frac{3}{2}$ 
can be derived \cite[eq.~(1.1)]{AKMPP17}, using the work of \cite{KW04}. 
This facilitates the studies of conformal embeddings enormously. 
For general $\W$-algebras concrete $\lambda$-brackets or operator product formula are not computable and so one needs to establish other strategies for the studies of conformal embeddings. 

The results for minimal $\W$-algebras have interesting applications in  representation theory because in this special case the quantum Hamiltonian reduction functor is exact and non-zero in the Kazhdan--Lusztig category KL$_k$ for $k \notin {\Z}_{\geq 0}$. 
The classification of collapsing levels in \cite{AKMPP18} was used in \cite{AKMPP20} to prove that the category KL$_k$ is semi-simple when $k$ is collapsing. 
Next development is given in \cite{CY21}, where it was proved that 
KL$_k$ is a braided tensor category  for 
collapsing $k$. 
The rigidity can also be proved by using certain decompositions of conformal embeddings.  
In \cite{ACPV} it was proved that the category KL$_{k_n}(\mf{sl}_{2n})$ 
for $k_n=-n-1/2$ is a semisimple, rigid braided tensor category, 
although the level $k_n$ is not collapsing for minimal affine $\W$-algebras. 
But it was proved in \cite{AMP23} that the level $k_n$ is collapsing for hook type $\W$-algebras. 
These results indicate that it is important to construct and classify 
collapsing levels for non-minimal affine $\W$-algebras.  
So let us conjecture:

\begin{Conj}
Assume that $\g^\natural$ is a reductive Lie algebra and $k$ is a collapsing level for $\W_k(\g, f)$, then KL$_k(\g)$ is a semisimple, braided tensor category.
\end{Conj}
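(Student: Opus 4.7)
The plan would be to adapt the strategy of \cite{AKMPP20} (carried out there for minimal $\W$-algebras) to arbitrary nilpotent $f$, with the quantum Hamiltonian reduction functor $H_{DS,f}\colon \on{KL}_k(\g)\to \W^k(\g,f)\text{-mod}$ as the main bridge. The first and most serious requirement is exactness of $H_{DS,f}$ on $\on{KL}_k(\g)$. For minimal $f$ this is classical, and for admissible $k$ and general $f$ it is a theorem of Arakawa; however, the interesting cases in this paper include collapsing levels that are \emph{not} admissible, so exactness must be re-established under the collapsing hypothesis. One natural attempt is to show that the surjection $\W^k(\g,f)\twoheadrightarrow L_{k^\natural}(\g^\natural)$, together with reductivity of $\g^\natural$, forces vanishing of the higher reductions $H^i_{DS,f}$ on $\on{KL}_k(\g)$ by a spectral-sequence argument against the associated-variety stratification.

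Granted exactness, the reduction lands in $L_{k^\natural}(\g^\natural)\text{-mod}$, and one then needs to see that (i) the essential image lies in $\on{KL}_{k^\natural}(\g^\natural)$, and (ii) $\on{KL}_{k^\natural}(\g^\natural)$ is itself semisimple. For (i), one would track the action of $V^{k^\natural}(\g^\natural)\subset \W^k(\g,f)$ on reductions of generalized Verma modules; for (ii), one would argue that the collapsing condition, combined with reductivity of $\g^\natural$, pins $k^\natural$ down to a value at which semisimplicity of $\on{KL}_{k^\natural}(\g^\natural)$ is known (for instance, admissible or boundary admissible). This last point should be a finite, case-by-case verification using the classification tables produced later in the paper in the spirit of \cite{AKMPP17,AKMPP18,AMP23}.

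To transfer semisimplicity back from $\on{KL}_{k^\natural}(\g^\natural)$ to $\on{KL}_k(\g)$, one would construct an inverse Hamiltonian reduction \`a la Semikhatov--Feigin--Semikhatov, or alternatively a Wakimoto-type free-field realization that embeds $\on{KL}_k(\g)$-modules inside tensor products of $\on{KL}_{k^\natural}(\g^\natural)$-modules with ghost and lattice vertex algebras; semisimplicity then descends provided one controls the auxiliary factors. Finally, for the braided tensor category structure one appeals to the Huang--Lepowsky--Zhang machinery as packaged in \cite{CY21}: once semisimplicity and ordinary-module finiteness are in hand, the existence of the tensor product, associators, and braiding reduces to verifying convergence and grading-restriction, both of which are controlled by the reductivity of $\g^\natural$ and the identification of $\W_k(\g,f)$ with an honest affine vertex algebra.

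The main obstacle is unambiguously the exactness of $H_{DS,f}$ at non-admissible collapsing $k$; this is precisely the reason the statement is a conjecture rather than a theorem, and any serious attack on the full statement will have to start there. The secondary obstacle, namely controlling $k^\natural$ so that $\on{KL}_{k^\natural}(\g^\natural)$ is semisimple, looks tractable by combining the collapsing-level classification with Arakawa's semisimplicity results at admissible levels, but would need to be handled uniformly rather than example by example.
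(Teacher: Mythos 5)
The statement you are addressing is stated in the paper as a \emph{conjecture}; the paper supplies no proof of it, so there is no argument of the authors to measure your proposal against. What the paper does provide is the roadmap you are following: for minimal $f$, semisimplicity of KL$_k(\g)$ at collapsing levels was proved in \cite{AKMPP20} using the classification of \cite{AKMPP18}, and the braided tensor structure in \cite{CY21}; the present paper's contribution is to produce many new collapsing levels for non-minimal $f$, including non-admissible ones, which is exactly what makes the general statement plausible but also what breaks the minimal-case proof.

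Your proposal is a strategy sketch rather than a proof, and you are candid about this, so the assessment is short: the decisive gap you identify is real and is precisely why the statement remains a conjecture. Exactness of $H_{DS,f}$ on KL$_k(\g)$ is available for minimal $f$ (for $k\notin\Z_{\geq 0}$) and, for general $f$, essentially only at admissible $k$; the new collapsing levels here are non-admissible with non-minimal $f$, so no existing exactness theorem applies, and the spectral-sequence argument you gesture at is not carried out. Two further points in your sketch would need substance even granting exactness. First, you propose to pin $k^\natural$ down to a value where KL$_{k^\natural}(\g^\natural)$ is known to be semisimple (admissible or boundary admissible); but the paper does not establish that $k^\natural$ is always admissible at collapsing levels, and several of its arguments invoke only quasi-lisse-ness of $L_{k^\natural}(\g^\natural)$, which does not by itself give semisimplicity of KL$_{k^\natural}$. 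Second, the inverse-reduction or free-field step transferring semisimplicity back from KL$_{k^\natural}(\g^\natural)$ to KL$_k(\g)$ does not currently exist for general $f$ and is the least developed part of even the minimal-case literature. In short: there is no proof in the paper to compare with, your plan is consistent with the route the authors themselves suggest, but it does not constitute a proof, and the obstruction you name at the outset is the genuine one.
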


%

Assume that $\g$ is a simple Lie algebra. 
Recall that a level $k$ is said to be {\em admissible} 
\cite{KW88,KW89, KW08}
if 
$$k+h^\vee = \frac{p}{q} \in \Q_{>0} 
\quad \text{with} \quad (p,q)=1 
\quad \text{and} \quad
\begin{cases}p \geq h^\vee &\text{if }(q,r^\vee)=1,\\
p \geq h &\text{if }(q,r^\vee)\not=1,
\end{cases}$$ 
where $h^\vee$ and $h$ are the dual Coxeter number 
and the Coxeter numbers of $\g$, respectively, and 
$r^\vee$ is the lacety of $\g$: $r^\vee=1$ for $\g$ of type $A,D,E$, 
$r^\vee=2$ for $\g$ of type $B,C,F_4$ 
and $r^\vee=3$ for $\g$ of type $G_2$. 

For admissible levels,  a strategy to study 
 collapsing levels for 
 any affine $\W$-algebra  
 was found in
\cite{AEM}.
 A necessary condition for a $\W$-algebra to collapse to an affine vertex algebra is of course that their associated varieties \cite{Ar12} coincide. 
 This condition was used to detect possible collapsing levels 
 in the classical types. 
 A sufficient condition for collapsing is the agreement of 
 the asymptotic data of characters: using this condition  
a list of admissible collapsing levels has been determined 
in \cite{AEM}.  
This list is conjecturally exhaustive, and proven to be exhaustive in the 
 exceptional types.

\subsection{Results}
The aim of the present article is first to summarize and harmonize 
the previously obtained results in a concise way, 
and second to extract specific conditions which allow us 
to detect new examples (for both conformal 
embeddings and collapsing levels at both admissible and non-admissible levels).

Our main criterion is as follows, and we refer to \Cref{Sec:conditions} 
for the more precise assumptions for the validity of the statements.
\begin{criterion}[\Cref{criterion}]
Let $V$ be a conical vertex operator superalgebra. 
Let $\aa$ be a direct sum of a reductive Lie algebra 
and Lie superalgebras of type $\mathfrak{osp}_{1|2n}$.
Assume that 
there exists a vertex algebra homomorphism 
$\varphi \colon V^k(\aa) \to V$ 
satisfying the following conditions: 
\begin{enumerate}[{\rm (A)}]
\item 
$k+h^\vee \not\in \mathbb Q_{\leq 0}$, 
\item 
$\varphi(x)$ is primary for all $x\in\g$ and 
$\varphi(\aa)=V_1$,
\item 
the central charges of $V$ and $V^k(\aa)$ coincide, 
\item 
the weight-two subspace of $\text{\rm Com}(\varphi(V^k(\aa)), V)$ is one-dimensional, 
\item 
we have $h_i - h_{\lambda_i}\neq 0$ 
for all $i$ such that $h_i >1$. 
\end{enumerate}
Our conclusions are the following:  
\begin{enumerate} [{\rm (i)}]
\item 
if all the conditions \ref{crit-1}--\ref{crit-iv} hold, 
then 
$$L \cong L_k(\aa),$$ 
where $L$ is the simple quotient of $V$, 
\item 
if the conditions \ref{crit-1}--\ref{crit-iii} hold, 
then 
$L$ is a 
conformal extension of ${\tilde V}_k(\aa)$, the image 
of $V^k(\aa)$ in $L$ through the composition map 
$\pi \circ \varphi \colon V^k(\aa) \to L$, 
with $\pi \colon V \to L$ the natural projection.
\end{enumerate}
\end{criterion}
We apply this criterion to the case where 
$V = \W^k(\g, f)$, $\aa=\g^\natural$ and $\varphi \colon V^{k^\natural}(\g^\natural) \to \W^k(\g, f)$.  
We find all pairs  $(\W^k(\g, f), V^{k^\natural}(\g^\natural))$ that are covered by this criterion.
For this,  
 one firstly needs to find those $\W$-algebras for which condition \ref{crit-iii} applies,  
 that is the commutant 
$\text{\rm Com}(V^{k^\natural}(\g^\natural), \W^k(\g, f))$ has one-dimensional weight-two subspace: see \Cref{Pro:partitions}, \Cref{confBCD}. 
Once that is done, one has to enumerate the levels for which central charges 
of $\W^k(\g, f)$ and $V^{k^\natural}(\g^\natural)$ coincide (condition \ref{crit-ii}).
Our conclusions for the classical types, that is for $\g=\mf{sl}_{m|n}$ or $\g=\mf{osp}_{m|n}$, 
are summarized in \Cref{cesl} and \Cref{pro:osp1}. Then we proceed to verify whether condition \ref{crit-iv} is verified. The outcome of this analysis
appears in \Cref{Pro:concl_Type-A} and \Cref{Pro:concl_Type-BCD}, and summarized in \Cref{BCD}.  
There are also plenty of examples of conformal embeddings 
or collapsing levels which are not covered by our criterion. 
For the case where $f$ is minimal, we refer to \cite{AKMPP18},  
for the case where $k$ is admissible, we 
refer to \cite{AEM}.


Our conclusions for the exceptional types 
are summarized in \Cref{Tab:Data-E6}--\Cref{Tab:Data-E8-6}. 
The tables are established using either the criterion of the present paper, 
or the results of \cite{AEM} when the level is admissible,  
or other specific methods (for instance, explicit computations 
of OPEs). 
In particular, combining all our techniques, we obtain a complete list 
of collapsing levels for~$\g=G_2$.

\subsection{History, motivation and outlook}
As so often, the first studies of conformal embeddings have been done 
in the physics of conformal field theory. 
The motivation for these studies was the search for new exceptional conformal field theories. 
The best-known example is the Capelli--Itzykson--Zuber $ADE$ classification 
of rational conformal field theories associated to $SU(2)$ \cite{CIZ}. 
For each $A_n, D_n$, $n \in \mathbb Z_{>0}$ and $E_6, E_7, E_8$ they found exactly one rational conformal field theory. 
The case of $A_n$ is just the usual Wess--Zumino--Witten theory of $L_n(\mathfrak{sl}_2)$. 
The cases of $D_n$ are explained by simple currents, the cases of 
$E_6$ and $E_8$ correspond to the conformal embeddings 
$L_{10}(\mathfrak{sl}_2) \hookrightarrow L_1(\mathfrak{sp}_4)$ and $L_{28}(\mathfrak{sl}_2) \hookrightarrow L_1(\mathfrak{g}_2)$, 
while the case of $E_7$ is of a different nature. 

The modern interest of physics in vertex operator algebra is thanks to their correspondences 
with higher-dimensional quantum field theories \cite{B+}. 
One such family of quantum field theories are Argyres--Douglas theories \cite{AD}, 
that are labelled by pairs  of Dynkin diagrams. 
The associated vertex algebras seem to be possibly trivial conformal extension 
of two different affine vertex algebras or $\W$-algebras. 
In particular, many conformal embedding conjectures appeared in this context. 
Typical examples come from boundary admissible levels, 
\cite{SonXieYan17,XieYan,XieYanYau,C17}. 
Results of \cite{AEM, AMP23, ACGY} confirm several such conjectures. 
For low rank 4D $\mc{N}=2$ SFCTs, 
a list of expected isomorphisms 
between VOAs coming from a single theory  have been 
exhibited in \cite[Tables 11--14]{LiXieYan}. 
Our results confirm some of their predictions. 
For example, we obtain the following isomorphisms (at non-admissible 
levels for the $\W$-algebras), 
where we use the Bala--Carter classification for the nilpotent orbits:
$$\W_{-12}(E_7,A_2+3 A_1) 
\cong 
\W_{-12}(E_7,2A_2)
\cong 
\W_{-6}(F_4,\tilde A_2) 
\cong L_{-2}(G_2).
$$

Another famous family of collapsing $\W$-algebras is 
the one associated with the Cvitanovi\'{c}--Deligne 
exceptional series, that is $\g=A_1, A_2, G_2, D_4, F_4, E_6, E_7, E_8$,  
and a minimal nilpotent element $f_\theta$. 
For such a $\g$, one has 
that $\W_{-\frac{h^\vee}{6}-1}(\g, f_\theta) = \mathbb C$, i.e., 
the $\W$-algebra becomes trivial.  
By  \cite{AM}, the above cases and  $k=-\frac{1}{2}$ when $\g=\mathfrak{sp}_{2n}$ are the only  ones for which  $\W_{k}(\g, f_\theta)=\mathbb C$  if $\g$ is a Lie algebra. These results have been extended to the super case in \cite{AKMPP18}. 
Note that these levels are non-admissible for types $D_4, E_6, E_7, E_8$ so 
one gets new strongly rational (lisse and rational) $\W$-algebras at non-admissible levels. 
This becomes somehow even more interesting by noting that if we shift the level by $+1$, 
then $\W_{-\frac{h^\vee}{6}}(\g, f_\theta)$ is a conformal extension of 
$L_{k^\natural}(\g^\natural) \otimes \W_{-\frac{1}{3}}(\mathfrak{sl}_2)$ \cite{Kaw}. 
In particular, for  types $D_4, E_6, E_7, E_8$ one again gets new strongly rational 
$\W$-algebras at non-admissible levels. 
If we shift the level again by plus one, then similar phenomena seem to appear, 
see \cite{ACK24}. 
%

An example occurs for   $\W_{-k}(\mathfrak{so}_{2n}, f_\theta)$ 
for $k=-1, -2$ and $n \in \mathbb Z_{\geq 4}$. 
Indeed, 
it is proved in \cite{AKMPP18} that
$\W_{-2}(\mathfrak{so}_{2n}, f_\theta) \cong L_{n-4}(\mathfrak{sl}_2)$ 
while one of the main results of \cite{CFKLN} is that $\W_{-1}(\mathfrak{so}_{2n}, f_\theta) \cong \left(L_{n-3}(\mathfrak{osp}_{1|2}) \otimes \mathcal F^{2n-4}\right)^{\mathbb Z/2\mathbb Z}$, 
with $\mathcal F^{2n-4}$ the vertex superalgebra of free fermions of rank $2n-4$. 
These two results prove a conjecture of two of us on strong rationality \cite{AM}. 
Also, in  \cite{AKMPP20}, the maximal ideal of 
$V^{-2} (\mathfrak{so}_{2n})$ was described using the  isomorphism $\W_{-2}(\mathfrak{so}_{2n}, f_\theta) \cong L_{n-4}(\mathfrak{sl}_2)$.
Here we reprove this result: see  the case of $q=2, \ell =2$, $ k = k'_4$ of \Cref{pro:osp1} 
and~\Cref{Pro:concl_Type-BCD}. 

The general guiding principle is that if a $\W$-algebra $\W_k(\g, f)$ 
at a non-admissible level $k$ collapses to an affine vertex algebra at an admissible level, 
then, if we shift the level by plus one and the shifted level is still non-admissible, 
 it is worthwhile to try to understand if $\W_{k+1}(\g, f)$ is a conformal extension of 
$L_{(k+1)^\natural}(\g^\natural) \otimes W$ with $W$ a principal $\W$-algebra 
at a non-degenerate admissible level. 
We expect to find plenty of new and interesting results in this way. 

In general, conformal embeddings give us a hint on representation theory. 
Not much is known on the representation theory of simple $\W$-algebras 
(except when they are strongly rational \cite{Ar15,AvE23,Mc21}). 
If the $\W_k(\g, f)$-algebra collapses to an affine vertex algebra 
and if the representation theory of the latter is known, 
then we  understand the representation theory of $\W_k(\g, f)$ as well. 
Besides category $\mc{O}$ at admissible levels, there are presently only good results for the much larger category of weight modules of $L_k(\mathfrak{sl}_2)$ 
at any admissible level.
This category is neither finite nor semi-simple, but it has enough injectives and projectives \cite{ACK25}, 
it is a vertex tensor category \cite{C1}, it is ribbon \cite{CMY2, NOCW}, 
and its fusion rules are computed \cite{C2, NOCW}. 
A major goal is to make substantial progress on these representation theoretical properties 
for the categories of weight modules of simple $\W$-algebras and so far conformal extensions 
have been a major aid on this, 
e.g., all these properties have been established for the category of weight modules of 
$\W_{-n + \frac{n}{n+1}}(\mathfrak{sl}_n, f_{\text{sub}})$ \cite{CMY3}, 
where $f_{\text{sub}}$ denotes a subregular nilpotent element.

Finally, collapsing levels can be useful in determining, or at least predicting, associated varieties of $\W$-algebras at non-admissible levels. 
First, we find  many new quasi-lisse $\W$-algebras. 
Recall here that a vertex algebra is called quasi-lisse 
\cite{AK}
if its associated variety has finitely many symplectic 
leaves (it is lisse if it is just a point). 
An affine vertex algebra  
and its Drinfeld--Sokolov reduction 
are quasi-lisse if and only if 
the associated variety is contained in the nilpotent cone. 
Hence, if a $\W$-algebra collapses to an admissible 
affine vertex algebra, which is quasi-lisse by 
\cite{Ara09b}, then it is quasi-lisse as well. 
For instance, we obtain that 
$$\W_{-12+9/4}(E_6,D_4) \cong L_{-3+3/2}(A_2),$$
so $\W_{-12+9/4}(E_6,D_4)$ yields a new 
non admissible quasi-lisse $\W$-algebra. 

Second, from such an isomorphism, that is, 
$$\W_{k}(\g,f) \cong L_{k^\natural}(\g^\natural),$$
one can sometimes 
predict the variety of $L_{k}(\g)$ 
using the fact that the 
associated variety of the 
Drinfeld--Sokolov reduction corresponding to $f$ of $L_k(\g)$ 
is the intersection of the associated variety of $L_k(\g)$ 
with the nilpotent Slodowy slice of $f$ (\cite{Ara09b}). 
Here, for instance, we expect 
that the associated variety of $L_{-12+9/4}(E_6)$ 
is the closure of the  nilpotent orbit labelled $D_5(a_1)$ 
(see \Cref{Conj:E6:D5a1}).  
We refer to \Cref{Sec:examples_exceptional} for other examples 
and details about the strategy. 
The recent works of Shan--Yan--Zhao \cite{ShanYanZhao26}
provides a conjectural uniform description for the 
associated varieties of the simple affine vertex algebra 
$L_{k}(\g)$ attached to a 
simple Lie algebra $\g$ 
of simply-laced type and any rational level $k$ greater than the critical level. 
The conjectures presented in \Cref{Sec:examples_exceptional} 
are consistent with their predictions.





The key point for the detection of conformal embeddings is the structure 
of the vertex operator algebra 
$V=\mbox{Com}(V^{k^{\natural}} (\g^{\natural}), \W^k(\g, f))$. 
Our main criterion works when the weight-two space $V_2$ of $V$ 
is one-dimensional  (cf.~\Cref{Pro:paolo}).
In \Cref{Griess} we notice that $V_2$ 
can be equipped with the structure of a Griess algebra.  
We obtain this result in full generality,  for any $\W$-algebra. 
It is known that, when the Griess algebra is semisimple and associative, it generates a vertex subalgebra which is isomorphic to the tensor product of certain copies of the Virasoro vertex algebras. 
This implies the following: 
\begin{itemize}
\item assume that the Griess algebra $V_2$ is semisimple and associative,  then $\W^k(\g, f)$ has a vertex subalgebra isomorphic to
$$ V^{k^{\natural}} (\g^{\natural}) \otimes V^{Vir} _{c_1} \otimes \cdots \otimes V^{Vir}_{c_t},$$
where $V^{Vir} _c$ denotes the universal Virasoro vertex algebra of central charge $c$, 
\item the embedding 
$V^{k^{\natural}} (\g^{\natural}) \hookrightarrow \W^k(\g, f)$ 
is conformal if and only if $c_i =0$ for $i= 1, \dots, t$.
\end{itemize}
Using this observation, in \Cref{Griess} we find some new cases 
of conformal embedding in the cases where \Cref{Pro:paolo} 
cannot be applied.

\subsection*{Acknowledgements}
We would like to thank Antun Milas,  Ozren Per\v se,  Shigenori Nakatsuka, Wenbin Yan, 
for long time discussion related to conformal embeddings and collapsing levels, and to Ching Hung Lam for discussion related to  the Griess algebras. 
We are 
grateful to the organisers of the conferences  
Representation Theory XVIII and XIX in Dubrovnik where this project 
was initiated. 
We also thank Peng Shan, Wenbin Yan and Qixian Zhao for relevant discussions. 
%

D.A.  is  partially supported by the Croatian Science Foundation under the project IP-2022-10-9006 
 and by the project “Implementation of cuttingedge research and its application as part of the Scientific Center of Excellence QuantiXLie“, PK.1.1.02, European Union, European Regional Development Fund.

T.A is partially supported by JSPS Kakenhi Grant numbers 26H01997, 25K21659,
21H04993 and 19KK0065.

T.C. is partially supported by DFG Ptojektnummer 551865932.

A.L.  is partially supported by National Science Foundation Grant DMS-2401382 and Simon Foundation Travel Support for Mathematicians MPS-TSM-00007694.

A.M. is partially supported by the European Research 
Council (ERC) under the European Union's Horizon 2020 research innovation programme
(ERC-2020-SyG-854361-HyperK) and the ANR project ANR-24-CE40-3389 (GRAW). 

P.MF and P.P.  are partially supported by the PRIN project 2022S8SSW2 - Algebraic and geometric aspects of Lie theory - CUP B53D2300942 0006, a project cofinanced
by European Union - Next Generation EU fund.

\section{Sufficient conditions for conformal embeddings}
\label{Sec:conditions}

\subsection{Assumptions} 
\label{Sub:assumptions}
Let $V$ be a conical vertex operator superalgebra, 
that is, $V$ is a conformally $\frac{1}{2}\mathbb Z_{\geq 0}$-graded 
vertex algebra,  
$$V = \bigoplus_{n \in \frac{1}{2}\mathbb Z_{\geq 0}} V_n,$$ 
with 
$V_0 = \mathbb C \mathbf 1$ and $\dim V_n < \infty$ 
for all $n  \geq 0$.  Denote the state-field correspondence by  $x\mapsto Y(x,z)=\sum_{n\in\mathbb Z} x_{(n)}z^{-n-1}$. 
Assume that $V$ is finitely strongly 
generated by fields 
$X^1=Y(x_1, z), \ldots, X^s=Y(x_s, z)$ 
such that $X^1=L(z)$ is the Virasoro field of~$V$, 
where we write 
$$L(z) = \sum\limits_{n\in\Z} L_n z^{-n-2}.$$

Assume that $L_1V_1=\{0\}$. By \cite[Theorem 3.1]{Li} there is an invariant symmetric bilinear  form $(-|-)$ 
on $V$ such that $(\mathbf 1,\mathbf 1)=1$
(more precisely, \cite{Li} deals with vertex operator algebras with integral conformal weight; one can generalize to our setting  as in \cite{KMP22}).  One shows that   $(- |-)$ is 
uniquely determined 
by $(y_{(m)} \mathbf 1 | z_{(n)} \mathbf 1) 
= (\mathbf 1 | \theta(y_{(m)})z_{(n)} \mathbf 1)$ 
together with $(\mathbf 1 | \mathbf 1)=1$, 
where $\theta$ is the anti-homomorphism 
on the algebra of 
modes defined by 
$$\theta(v_n)= e^{-\pi\sqrt{-1}(\frac{1}{2} p(v) + \Delta_v)}(e^{L_1}v)_{-n},$$
where $p(v)\in\{0,1\}$ and the parity of $v$ is $\overline{p(v)}$.
	

\begin{Lem}
\label{Lem:trivial_quotient}
Assume that $V$ has central charge zero 
and that $(x_1 | x_j) =0$ for all $j>1$. 
Then the simple quotient $L$ of $V$ is trivial, i.e., $L \cong \mathbb C$. 
\end{Lem}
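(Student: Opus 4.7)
My plan is to show that the conformal vector $\omega=x_1$ lies in the radical $R$ of the invariant bilinear form $(-|-)$ on $V$, and then to deduce that $L\cong\mathbb{C}$.

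The deduction is immediate: if $\omega\in R$, then its image $\bar\omega$ in $L=V/R$ vanishes. Since $\bar\omega$ is the conformal vector of $L$, we have $Y_L(\omega,z)=0$, hence every Virasoro operator $L_n^L$ acts as zero on $L$. But the $L_0$-grading on $L$ inherited from $V$ makes $L_n$ an eigenspace of $L_0^L$ with eigenvalue $n$, so $L_n=0$ for all $n>0$, and therefore $L=L_0=\mathbb{C}\mathbf{1}$.

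To show $\omega\in R$, since $(-|-)$ respects the conformal grading, I need only verify $(\omega|v)=0$ for $v\in V_2$. My main tool is the identity
\[
(\omega|v)=(L_{-2}\mathbf{1}|v)=(\mathbf{1}|L_2 v),
\]
which follows from invariance $(L_n u|w)=(u|L_{-n}w)$ (a consequence of the formula for $\theta$ applied to $\omega$: with $L_1\omega=0$ and $\Delta_\omega=2$ one computes $\theta(L_n)=L_{-n}$). Since $L_2 v\in V_0=\mathbb{C}\mathbf{1}$ whenever $v\in V_2$ and $(\mathbf{1}|\mathbf{1})=1$, the pairing $(\omega|v)$ equals the scalar such that $L_2 v$ is that multiple of $\mathbf{1}$. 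Thus verifying $\omega\in R$ reduces to checking that $L_2|_{V_2}=0$, which I carry out on a spanning set.

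By strong generation, $V_2$ is spanned by: the weight-two strong generators $x_j$; the derivatives $L_{-1}x_j$ of weight-one generators $x_j$; and normally-ordered products $(x_i)_{(-1)}x_j$ of generators of weight less than two. For the generators: $L_2\omega=[L_2,L_{-2}]\mathbf{1}=(4L_0+c/2)\mathbf{1}=0$ when $c=0$, while for $j>1$ the hypothesis $(x_1|x_j)=0$ translates directly via the key identity into $L_2 x_j=0$. For the derivatives, $L_2L_{-1}x_j=[L_2,L_{-1}]x_j=3L_1 x_j=0$ by the standing assumption $L_1V_1=0$. For products, using the primary-field commutator $[L_2,(x_i)_{(-1)}]=(x_i)_{(1)}$ for weight-one $x_i$, I obtain $L_2(x_i)_{(-1)}x_j=(x_i)_{(1)}x_j\in V_0$, and must show this scalar vanishes as well.

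The principal obstacle will be the last case: controlling the residual scalars produced from normally-ordered products of weight-one generators. I expect to iterate the Borcherds identity and skew-symmetry to reduce each such scalar pairing to pairings among the strong generators already covered by the hypothesis $(x_1|x_j)=0$, crucially using $L_1V_1=0$ to ensure that weight-one modes interact cleanly with the conformal structure. Once $L_2|_{V_2}=0$ is established, the reductions above yield $\omega\in R$, and the simple quotient $L$ collapses to $\mathbb{C}\mathbf{1}$ as required.
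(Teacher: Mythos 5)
Your strategy is the same as the paper's: place $\omega=x_1$ in the radical of the invariant form, identify that radical with the maximal proper ideal, and conclude $L\cong\mathbb C$ because $L_0$ then acts by zero on the quotient. Your reductions are correct as far as they go --- $(\omega\,|\,v)=(\mathbf 1\,|\,L_2v)$, $L_2\omega=\tfrac{c}{2}\mathbf 1=0$, $L_2L_{-1}x_j=3L_1x_j=0$, and $[L_2,a_{(-1)}]=a_{(1)}$ for quasi-primary $a$ of weight one --- and you are right that the real content is showing $(\omega\,|\,v)=0$ for \emph{every} $v\in V_2$, a point the paper's proof compresses into the single assertion that orthogonality of $x_1$ to the generating vectors already places it in the kernel of the form.

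The gap is precisely in the step you defer. For weight-one generators $a,b$ you obtain $L_2\bigl(a_{(-1)}b\bigr)=a_{(1)}b$, and by invariance $a_{(1)}b$ equals $(a\,|\,b)\,\mathbf 1$ up to a nonzero constant: it is the pairing of two \emph{weight-one} generators with each other. The hypothesis $(x_1\,|\,x_j)=0$ says nothing about such pairings --- since the form pairs $V_m$ only with $V_m$, that hypothesis is automatic, hence vacuous, unless $x_j$ has weight two --- so no Borcherds-identity or skew-symmetry manipulation can reduce $(a\,|\,b)$ to quantities the hypothesis controls. Concretely, $V^{k_1}(\mathfrak{g}_1)\otimes V^{k_2}(\mathfrak{g}_2)$ with $c_1+c_2=0$ and $k_1,k_2\neq 0$ satisfies every stated hypothesis, yet $a_{(1)}b=k_i(a,b)\mathbf 1\neq 0$ and the simple quotient is $L_{k_1}(\mathfrak{g}_1)\otimes L_{k_2}(\mathfrak{g}_2)\neq\mathbb C$; so the obstruction is not an artifact of your method. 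To close the argument one needs an additional input: for instance $(x_i\,|\,x_j)=0$ for all pairs of generators, or the hypothesis of \Cref{Co:trivial_quotient} that the weight-two subspace is one-dimensional (the form in which the statement is actually applied later in the paper), or $V_1=\{0\}$ together with the absence of weight-$\tfrac12$ generators, so that $V_2$ is spanned by the weight-two generators alone. Note also that your spanning set of $V_2$ omits the products involving weight-$\tfrac12$ and weight-$\tfrac32$ generators permitted by the $\tfrac12\mathbb Z_{\geq 0}$-grading.
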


\begin{proof}
The pairing of the Virasoro vector $(x_1 | x_1)$ is proportional 
to the central charge $c$ of $V$, 
because 
\begin{align*} 
(L_{-2}{\bf 1}| L_{-2}{\bf 1}) & = ({\bf 1} | \theta(L_{-2})L_{-2}{\bf 1}) &\\
& =({\bf 1} | L_2 L + \text{ terms of conformal weights }>0) 
= \frac{c}{2}({\bf 1} | {\bf 1}) = \frac{c}{2},&
\end{align*}
so it vanishes. 
Since $x_1$ is also orthogonal to  all other generating vectors, 
it is in the both right and left kernel of the bilinear form 
$(-|-)$, in particular it is in the maximal proper ideal $I$ of~$V$. 
Since all strong generators have nonzero conformal weight they are also 
in the ideal $I$. 
It follows that $V/I =\mathbb C$.
\end{proof}

The proof of the lemma clearly implies next corollary. 

\begin{Co}
\label{Co:trivial_quotient}
If $V$ has a one-dimensional weight-two subspace and has central charge zero, 
then the simple quotient $L$ of $V$ is trivial, 
i.e., $L \cong \mathbb C$.
\end{Co}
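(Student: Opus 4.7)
The plan is to reduce the corollary directly to \Cref{Lem:trivial_quotient} by showing that, once $V_2$ is one-dimensional, the orthogonality hypothesis $(x_1|x_j)=0$ for all $j>1$ is forced automatically. I would thus only need to verify these extra orthogonality conditions and then invoke the lemma verbatim.

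First I would recall the standard observation that the invariant bilinear form $(-|-)$ on a conformally graded vertex (super)algebra as in \Cref{Sub:assumptions} respects the conformal grading: distinct weight spaces $V_m$ and $V_n$ are orthogonal whenever $m\neq n$. This is essentially built into the invariance formula defining $(-|-)$, since $L_0$ acts self-adjointly with respect to the form and has the conformal weights as its eigenvalues.

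Next I would split according to the conformal weight $\Delta_j$ of the strong generator $x_j$ for $j>1$. If $\Delta_j\neq 2$, then $(x_1|x_j)=0$ by the grading compatibility just recalled, since $x_1=L_{-2}\mathbf{1}$ has weight $2$. If $\Delta_j=2$, then $x_j$ lies in $V_2$, which by the corollary's hypothesis equals $\mathbb{C}x_1$ (note $x_1$ is nonzero as the Virasoro vector), so $x_j=c_j x_1$ for some scalar $c_j$. Consequently $(x_1|x_j)=c_j (x_1|x_1)=c_j\,c/2=0$, by the very computation carried out in the proof of \Cref{Lem:trivial_quotient} and the hypothesis $c=0$. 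Either way $(x_1|x_j)=0$.

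The hypotheses of \Cref{Lem:trivial_quotient} are now all satisfied, and its conclusion $L\cong\mathbb{C}$ transfers directly. I do not anticipate any serious obstacle: the only mild point to check is the grading orthogonality of $(-|-)$, which is a standard feature of invariant bilinear forms in this setup.
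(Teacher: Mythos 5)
Your proposal is correct and follows essentially the same route as the paper, which simply notes that the proof of \Cref{Lem:trivial_quotient} implies the corollary: one reduces to the lemma by checking the orthogonality hypothesis. Your fleshing-out of that check (grading orthogonality of the invariant form for weights $\neq 2$, and $x_j\in V_2=\mathbb{C}x_1$ together with $(x_1|x_1)=c/2=0$ for weight $2$) is exactly the implicit content of the paper's one-line argument.
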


When $U$ is a vertex subalgebra of $V$, 
we denote by $\Com(U,V)$ the {\em commutant} 
of $U$ in $V$ defined by 
$$\Com(U,V)=\{v \in V \colon u_{(n)}v =0 \text{ for all } u \in U \text{ and } 
n \geq 0 \}.$$


Let $\aa$ be a Lie superalgebra 
which is either a simple Lie 
algebra or $\aa= \mf{osp}_{1|2n}$ 
equipped with a non-degenerate, super-symmetric, 
invariant, even bilinear form 
normalised such that long roots have
norm two. 
Let $V^k(\aa)$ be the corresponding affine Lie algebra. 
 
Next lemma is proven in \cite[Theorem 4.1]{ACK24}. 
We briefly outline the proof for the convenience of the reader. 
\begin{Lem}
\label{Lem:commutant}
Assume that $V$ is equipped with an action of $V^k(\aa)$, 
that is, there is a vertex superalgebra homomorphism 
$\varphi \colon V^k(\aa) \to V$. 
Assume furthermore that the following conditions hold: 
\begin{enumerate}[{\rm (i)}]
\item 
\label{Lem:commutant_simple-i} 
the form $(-|-)$ is non-degenerate, 
\item 
\label{Lem:commutant_simple-ii}
$\varphi(x)$
is primary of conformal weight one for all 
$x \in \aa$, 
\item 
\label{Lem:commutant_simple-iii}
$k+h^\vee \not\in \mathbb Q_{\leq 0}$. 
\end{enumerate} 
Then $\Com(\varphi(V^k(\aa)), V)=V^{\aa[t]}$ is simple. 
\end{Lem}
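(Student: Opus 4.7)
The plan is in two stages: first identify the commutant set-theoretically, then establish simplicity via non-degeneracy of the bilinear form.

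For the identification $\Com(\varphi(V^k(\aa)), V) = V^{\aa[t]}$, the inclusion $\subseteq$ is immediate by specialising the commutant condition to $u = \varphi(x)$ with $x \in \aa$. For the converse, since $\aa$ strongly generates $V^k(\aa)$ as a vertex algebra, I would apply the Borcherds/Dong--Li identity for modes of normally ordered products: if $v$ is annihilated by $\varphi(x)_{(n)}$ for all $x \in \aa$ and all $n \geq 0$, then by induction on the length of the iterated normally ordered products of the generators, $v$ is annihilated by the non-negative modes of every element of $\varphi(V^k(\aa))$. Condition \ref{Lem:commutant_simple-ii}, primality of conformal weight one, is what makes the degree bookkeeping consistent (so that $\varphi(x)_{(0)}$ preserves and $\varphi(x)_{(n)}$, $n>0$, strictly lowers conformal weight).

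For simplicity of $C := V^{\aa[t]}$, I would invoke the standard principle that a conical vertex operator superalgebra with $V_0 = \mathbb C \mathbf 1$ is simple if and only if its canonical invariant bilinear form is non-degenerate (its radical being the unique maximal ideal). The form $(-|-)$ restricts to an invariant bilinear form on $C$, normalised so that $(\mathbf 1|\mathbf 1) = 1$, so simplicity of $C$ reduces to non-degeneracy of this restriction. Suppose then that $v \in C$ pairs trivially with all of $C$. The $\wh{\aa}$-submodule $M_v \subset V$ generated by $v$ is a quotient of a Weyl module for $\wh{\aa}$ at level $k$, built on the finite-dimensional $\aa$-module $U(\aa)\cdot v$ (which is annihilated by $\aa \otimes t\mathbb C[t]$). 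The restriction of $(-|-)$ to $M_v$ is an invariant form that, upon identifying $v$ with a highest-weight vector, coincides with a Shapovalov-type contravariant form on this Weyl module. Condition \ref{Lem:commutant_simple-iii}, $k + h^\vee \notin \mathbb Q_{\leq 0}$, is precisely the Kac--Kazhdan genericity that keeps such a form non-degenerate. Since distinct $\wh{\aa}$-isotypic components are mutually orthogonal and every element of $V$ is obtained from $C$ by applying negative modes of $\wh{\aa}$, a vector in $C$ that pairs trivially with all of $C$ must pair trivially with all of $V$; non-degeneracy from condition \ref{Lem:commutant_simple-i} then forces $v = 0$.

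The main obstacle is the last step: making precise the identification of the restricted $V$-form on $M_v$ with the Shapovalov form at level $k$, and then propagating non-degeneracy from a single cyclic $\wh{\aa}$-submodule back to the whole of $C$. This requires a sufficiently controlled $\wh{\aa}$-module structure on $V$, for which the conical $\tfrac12\mathbb Z_{\geq 0}$-grading (so that all $\wh{\aa}$-highest-weight vectors sit in $C$) and the standing assumption $L_1 V_1 = 0$ are essential; the Kac--Kazhdan level condition \ref{Lem:commutant_simple-iii} is exactly what allows the generic-level argument to apply in this non-generic setting.
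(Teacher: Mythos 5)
There is a genuine gap in the second stage. The step you need is: if $v\in C=V^{\aa[t]}$ pairs trivially with $C$, then $v$ pairs trivially with all of $V$. You derive this from the claim that \emph{every element of $V$ is obtained from $C$ by applying negative modes of $\wh{\aa}$}. That claim is false in general: $V$ typically contains $\wh{\aa}$-submodules generated by highest-weight vectors carrying \emph{nontrivial} $\aa$-representations (for $V=\W^k(\g,f)$ these are the strong generators transforming in the modules $V(\lambda_i)$ with $\lambda_i\neq 0$), and such vectors lie in no $\wh{\aa}$-submodule generated by $C$; note also that if the claim were true, your appeal to orthogonality of distinct isotypic components would be vacuous, since there would be only one. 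The Shapovalov detour does not repair this: the restriction of a non-degenerate form to a submodule $M_v$ can be degenerate, and $k+h^\vee\notin\Q_{\leq 0}$ does \emph{not} make Weyl modules irreducible or their contravariant forms non-degenerate (take $\aa=\mathfrak{sl}_2$, $k=1$). So the non-degeneracy of $(-|-)$ on $C$ is not established.

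The paper closes this gap differently. Condition (iii) is used, via the projectivity of $V^k(\aa)$ in $\mathrm{KL}_k$ (\cite[Lemma 2.1]{ACL}), to identify $\Com(\varphi(V^k(\aa)),V)=V^{\aa[t]}$ with $V_{[0]}$, the eigenspace of the Sugawara operator $L_0^{\aa}$ for the eigenvalue $0$. Condition (ii) gives $L_1\varphi(\omega^{\aa})=0$, so $L_0^{\aa}=\varphi(\omega^{\aa})_{(1)}$ is self-adjoint for $(-|-)$; hence distinct $L_0^{\aa}$-eigenspaces are orthogonal, and a vector of $V_{[0]}$ orthogonal to $V_{[0]}$ is orthogonal to all of $V$, whence $v=0$ by condition (i). Together with the fact (from \cite{LL04}) that $\omega'=\omega-\varphi(\omega^{\aa})$ is a conformal vector of the commutant, this yields a conical vertex algebra with non-degenerate invariant form, hence simple. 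Your first stage (the identification of the commutant with $V^{\aa[t]}$ via strong generation) and your final reduction of simplicity to non-degeneracy of the form are fine; it is the orthogonality propagation that must be rebuilt around $L_0^{\aa}$ rather than around generation of $V$ by $C$.
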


In the lemma, assumption \ref{Lem:commutant_simple-ii}, and in all what follows, 
we identify $\aa$ with a subspace of $V^k(\aa)$ 
by $x \mapsto x_{(-1)}{\bf 1}$.


\begin{proof}
By Condition~\ref{Lem:commutant_simple-iii}, note that 
$V^k(\aa)$ is conformal by the Sugawara construction.  
Let $\omega^\aa$ be its conformal vector, 
and define the corresponding Virasoro 
field $L^\aa(z)$ by $L^\aa_n = \omega^\aa_{(n+1)}$. 
By Condition~\ref{Lem:commutant_simple-ii}, 
$\varphi(\omega^\aa) \in V_2$ and 
$L_1  \varphi(\omega^\aa) =0$. 
Hence by \cite[Theorem 3.11.12 and Remark 3.11.13]{LL04}, 
$L_n$ acts as $L_n^\aa$ on $\varphi(V^k(\aa))$ 
and as $\omega'_{(n+1)}$ on $\Com(\varphi(V^k(\aa)), V)$, 
where 
\begin{equation*}\label{omega'}\omega' := \omega - \varphi(\omega^\aa),\end{equation*}
with $\omega$ the conformal vector of $V$.  
Moreover, $\omega'$ belongs 
to $\Com(\varphi(V^k(\aa)), V)$ and is a conformal vector. 
It follows that the form $(-|-)$ restricts to 
an invariant symmetric bilinear form of $\Com(\varphi(V^k(\aa)), V)$. 

Next, because $V$ is conical, $V$ belongs to the Kazhdan--Lusztig category 
KL$_k$ of $\affg$. Moreover, by Condition~\ref{Lem:commutant_simple-iii} 
it follows from \cite[Lemma 2.1]{ACL} that 
$V^k(\aa)$ is projective in KL$_k$ 
and that 
$$\Com(\varphi(V^k(\aa)), V) = V^{\aa[t]} \cong \Hom_{\widehat \g}(V^k(\aa), V)=V_{[0]},$$ 
where 
$V_{[\lambda]}$ denotes the $L_0^\aa$-eigenspace 
relatively to the eigenvalue $\lambda$. 
So the restriction of the form $(-|-)$ to $V_{[0]}=\Com(\varphi(V^k(\aa)), V)$ 
is non-degenerate. 
Indeed, if $v$ is orthogonal to $V_{[0]}$, it is orthogonal to the whole $V$ 
using $(L_0^\aa v | w)=  (v | L_0^\aa w)$ for all $w \in V$, 
but $(-|-)$ is non-degenerate by Condition~\ref{Lem:commutant_simple-i}. 
This implies the simplicity of $\Com(\varphi(V^k(\aa)), V)$. 
\end{proof}

\begin{Co}
\label{Co:commutant}
Keep the hypothesis of \Cref{Lem:commutant}. 
Let $L$ be the simple quotient of $V$ 
and denote by $\widetilde V^k(\aa)$ the image of  $V^k(\aa)$ in $L$. 
Assume that the central charges of $V$ and $V^k(\aa)$ coincide 
and that the weight-two subspace of $\Com(\varphi(V^k(\aa)), V)$ 
is one-dimensional. 
Then $\Com(\widetilde V^k(\aa), L)\cong \mathbb C$.
\end{Co}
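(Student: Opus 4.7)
Let $C := \Com(\varphi(V^k(\aa)), V)$ and $C' := \Com(\widetilde V^k(\aa), L)$. The plan is to apply \Cref{Co:trivial_quotient} to $C$ and then transfer to $C'$ via the canonical projection $\pi \colon V \twoheadrightarrow L$.

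First I would verify the hypotheses of \Cref{Co:trivial_quotient} for $C$. From the proof of \Cref{Lem:commutant}, $C$ is a conical vertex subalgebra of $V$ with conformal vector $\omega' := \omega - \varphi(\omega^\aa)$, and its central charge equals $c(V) - c(V^k(\aa)) = 0$ by hypothesis; the weight-two subspace of $C$ is one-dimensional by assumption. Hence \Cref{Co:trivial_quotient} yields that the simple quotient of $C$ is $\mathbb{C}$. Because $C$ is conical, the sum of all proper graded ideals is again proper, so the maximal proper ideal of $C$ is unique.

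Next I would argue that $\pi$ restricts to a surjective vertex algebra homomorphism $C \twoheadrightarrow C'$. The map $\pi$ intertwines the $L_0^\aa$-actions defined by $\varphi$ and by $\tilde\varphi := \pi \circ \varphi$; since $L_0^\aa$ is semisimple on both $V$ and $L$ (both being objects of $\mathrm{KL}_k$ by conicality), one obtains $\pi(V_{[\lambda]}) = L_{[\lambda]}$ for every $\lambda$, and in particular $\pi(C) = C'$. Applying \Cref{Lem:commutant} now to $L$ with $\tilde\varphi$ in place of $\varphi$---conditions (ii) and (iii) being inherited from $V$, while (i) is automatic because $L$ is simple---gives that $C'$ is simple. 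Thus $C'$ is a simple quotient of $C$; combined with the uniqueness of the simple quotient of $C$ from the previous step, $C' \cong \mathbb{C}$.

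The step I expect to require the most care is the identification $\pi(V_{[\lambda]}) = L_{[\lambda]}$: one must verify that the $L_0^\aa$-eigenspace decomposition of $V$ descends cleanly to $L$, which rests on the projectivity of $V^k(\aa)$ in $\mathrm{KL}_k$ invoked in the proof of \Cref{Lem:commutant} and on the closure of $\mathrm{KL}_k$ under quotients.
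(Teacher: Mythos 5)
Your proof is correct and follows essentially the same route as the paper: show $\Com(\widetilde V^k(\aa),L)$ is simple via \Cref{Lem:commutant} applied to $L$, show it is a quotient of $\Com(\varphi(V^k(\aa)),V)$, and conclude with \Cref{Co:trivial_quotient}. The only difference is that where you argue the quotient step directly through the $L_0^\aa$-eigenspace decomposition $\pi(V_{[0]})=L_{[0]}$, the paper instead cites \cite[Theorem 8.1]{CL2}; your direct argument is a valid unwinding of the same mechanism.
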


\begin{proof}
Applying \Cref{Lem:commutant} to $L$, 
we obtain that $\Com(\widetilde V^k(\aa), L)$ is simple.  
Note that Condition \ref{Lem:commutant_simple-i} is satisfied 
because $L$ 
is simple. 
Since $V$ belongs to KL$_k$, the Cartan subalgebra $\h$ of $\aa$ acts semisimply 
on $V$ and so together with condition \ref{Lem:commutant_simple-iii} 
the assumptions of \cite[Theorem 8.1]{CL2} are satisfied, that is 
 $\Com(\widetilde V^k(\aa), L)$ is 
a quotient of $\Com(\varphi(V^k(\aa)), V)$ and, hence, is 
the simple quotient of $\Com(\varphi(V^k(\aa)), V)$.
Hence by \Cref{Co:trivial_quotient}, 
it is trivial. 
\end{proof}

In the setting of \Cref{Co:commutant}, 
we deduce that $L$ 
is a conformal extension 
of a quotient of $V^k(\aa)$. 
Indeed, let us denote by $\psi$ 
the composition map $\pi \circ \varphi$ 
where $\pi \colon V \to L$ is the natural projection. 
Since the weight-two space of $\Com(\widetilde V^k(\aa), L)=\C$ 
is zero, we have $\omega^L- \psi(\omega^\aa) =0$, 
where $\omega^L$ and $\omega^\aa$ 
are the Virasoro vectors of $L$ and $V^k(\aa)$, 
respectively. 
This means that $\psi\colon V^k(\aa) \to L$ is conformal, 
and so $L$ is a conformal extension of the quotient 
of $V^k(\aa)$ by the kernel of $\psi$.

\subsection{Main criterion}
\label{Sub:criterion}
Keep all assumptions on the vertex superalgebra 
$V=\bigoplus_{n \in \frac{1}{2}\Z}V_n$ as in \Cref{Sub:assumptions}. 
We also assume 
that $V$ comes equipped with an action of 
the affine vertex algebra $V^k(\aa)$, 
where $\aa$ is either simple or $\aa=\mathfrak{osp}_{1|2n}$, 
that is, 
there exists a vertex algebra homomorphism 
\begin{equation*} 
\varphi \colon V^k(\aa) \to V,
\end{equation*}
and, moreover that 
\begin{align*}
&k+h^\vee \not\in \mathbb Q_{\leq 0}, \\
& 
\varphi(x) 
\text{ is primary for all } x \in \aa. 
\end{align*}
Let $L$ be the simple quotient 
of $V$, $I$ the kernel of the natural projection $\pi \colon V \to L$, 
and $L_k(\aa)$ the simple quotient of $V^k(\aa)$. 

We aim to state a 
sufficient condition for establishing that $L \cong L_k(\aa)$.

Let $x_1, \dots, x_s$ be the strong generating 
vectors of $V$ 
as in \Cref{Sub:assumptions}. 
Assume that the linear span of these vectors 
is closed under the action of the zero-modes 
 of the weight-one fields in $V^k(\aa)$. 
 In other words, we assume that $\aa \cong V^k(\aa)_1$ 
 acts on this vector space. 
 Since $\aa$ is either a simple Lie algebra or $\aa=\mathfrak{osp}_{1|2n}$, 
 the action of $\aa$ is completely reducible 
 and we choose new generators $y_1, \dots, y_s$, 
such that each $y_i$ is in an irreducible representation of~$\aa$. 
Write $h_i$ the conformal weight of  $y_i$ 
so that 
$$L_0 y_i = h_i y_i.$$  

For each $i$, let $\lambda_i$ be the highest weight 
of the irreducible representation corresponding to $y_i$. 
Denote by $\mathbb{V}^k(\lambda_i)$ the Weyl module of $V^k(\aa)$ 
 whose top level is the highest weight representation $V(\lambda_i)$ 
 of $\aa$ corresponding to $\lambda_i$. 
The conformal weight of the top level $V(\lambda_i)$  is 
\begin{align}
\label{eq:h_lami}
h_{\lambda_i} =\frac{(\lambda_i, \lambda_i + 2\rho)}{2(k+h^\vee)},
\end{align}
where $\rho$ is the half-sum of positive roots of $\aa$. 
Here $(-,-)$ is the invariant non-degenerate 
bilinear form on $\aa\cong \aa^*$ 
such that $(\theta,\theta)=2$ with $\theta$ the highest 
positive root of $\aa$. 


Our main criterion is the following where, as before, 
$\aa$ is viewed as a subspace of $V^k(\aa)$ 
using $x \mapsto x_{(-1)}{\bf 1}$.

\begin{criterion}
\label{criterion} 
Assume that 
there exists a vertex algebra homomorphism 
$\varphi \colon V^k(\aa) \to V$ 
satisfying the following conditions: 
\begin{enumerate}[{\rm (A)}]
\item 
\label{crit-1} 
$k+h^\vee \not\in \mathbb Q_{\leq 0}$, 
\item 
\label{crit-i} 
$\varphi(x)$ is primary for all $x\in\aa$ and 
$\varphi(\aa)=V_1$, 
\item 
\label{crit-ii}
the central charges of $V$ and $V^k(\aa)$ coincide, 
\item 
\label{crit-iii}
the weight-two subspace of $\text{\rm Com}(\varphi(V^k(\aa)), V)$ 
is one-dimensional, 
\item 
\label{crit-iv}
we have $h_i - h_{\lambda_i}\neq 0$ 
for all $i$ such that $h_i >1$. 
\end{enumerate}
Our conclusions are the following:  
\begin{enumerate} [{\rm (i)}]
\item 
\label{criterion-i}
if all the conditions \ref{crit-1}--\ref{crit-iv} hold, 
then 
$$L \cong L_k(\aa),$$ 
where $L$ is the simple quotient of $V$, 
\item 
\label{criterion-ii}
if the conditions \ref{crit-1}--\ref{crit-iii} hold, 
then 
$L$ is a 
conformal extension of ${\tilde V}_k(\aa)$, the image 
of $V^k(\aa)$ in $L$ through the composition map 
$\pi \circ \varphi \colon V^k(\aa) \to L$.  
\end{enumerate}
\end{criterion}

\begin{proof}
First, if the conditions \ref{crit-1}--\ref{crit-iii} hold, then 
by \Cref{Co:commutant} 
we get that $\Com(\tilde V_k(\aa),L)$ $\cong \C$ 
and that the embedding ${\tilde V}_k(\aa) \hookrightarrow 
L$ is conformal. 

Next, because $\Com(\tilde V_k(\aa),L)\cong \C$, 
the conformal vector $\omega'$ of $\Com(\varphi(V^k(\aa)), V)$ is contained 
in a proper ideal of $V$. 
Indeed, by \Cref{Lem:commutant},  $\Com(\tilde V_k(\aa),L)$ 
is the simple quotient of $\Com(\varphi(V^k(\aa)), V)$,  
but $\psi(\omega')=0$ in $L=V/I$, where $\psi= \pi \circ \varphi$, 
since $\Com(\tilde V_k(\aa),L)\cong \C$ while $\omega' \in V_2$. 
So if in addition Condition \ref{crit-iv} holds, 
then all $y_i$ such that $h_i>1$ belongs 
to this ideal using $L'_0 y_i = \omega'_{(1)} y_i =(h_i - h_{\lambda_i})y_i$. 
Using the assumption $\varphi(\aa)=V_1$ of Condition~\ref{crit-i},  
we deduce that 
the composition map 
$\psi$ is surjective. 
So it factorizes through 
$L_k(\aa)$ since $L=V/I$ is simple. 
\end{proof}


\begin{Rem} 
\label{Rem:reductive}
All statements of this section 
hold for a reductive Lie algebra $\aa$ 
with summands either being reductive Lie algebras 
or of type $\mathfrak{osp}_{1|2n}$. 
More precisely, 
let us decompose $\aa$ as 
$\aa  =  \aa_0 \oplus  \aa_1 \oplus \cdots \oplus \aa_s$,  
where $\aa_0$ is the center of $\aa$ and all $\aa_i$ are simple. 
If $k$ is an invariant form on $\aa$, we set 
\begin{equation}
\label{Vkg}
V^{k}(\aa)=V^{k_0}(\aa_0) \otimes V^{k_1}(\aa_1) \otimes \dots 
\otimes V^{k_s}(\aa_s),
\end{equation} 
where $k_0:=k_{|\aa_0\times \aa_0}$ and for $i\geq 1$, 
$k_{|\aa_i\times \aa_i}=k_i (-,-)_i$ 
and  $(-,-)_i$ is the normalised form on $\aa_i$.
Then the condition $k+h^\vee \not\in\Q_{\leq 0}$ means that 
$k_j + h_j^\vee \not\in\Q_{\leq 0}$
for all $i =1,\ldots,s,$ 
where $h_j^\vee$ is the dual Coxeter number of $\aa_j$. 
\newline\indent 
From now on we  assume that the bilinear form $k_0$ is non degenerate, unless otherwise specified.
In particular, $V^k(\aa)$ is conformal and 
the non-degeneracy of $k_0$ ensures that the  
Heisenberg vertex algebra $V^{k_0}(\aa_0)$ is simple. 
Conditions \ref{crit-i}--\ref{crit-iii} remain unchanged, 
and Condition \ref{crit-iv} means that it holds 
for all simple summands $\aa_j$. 
\end{Rem}

\begin{Pro}
\label{prop:ext}  
Assume that the conditions \ref{crit-1}--\ref{crit-iii} hold. 
Then
\begin{enumerate}[{\rm (i)}]
\item  
\label{prop:ext-i}
If $\aa = \mathbb C$. 
Then $L$ is either a rank one Heisenberg vertex algebra $M(1)$, or a lattice vertex algebra $V_{\sqrt{p}\mathbb Z}$ for some $p \in 2 \mathbb Z$ 
if $L$ is a vertex algebra, 
and   
$p \in \mathbb Z$ if $L$ is a vertex superalgebra.
\item 
\label{prop:ext-ii} If $\aa = \aa^\perp \oplus \mathbb C$ with $\aa^\perp$ semisimple 
and $k$ is admissible for $\aa^\perp$, 
then one of the two situations happens: 
\begin{itemize}
\item there is a (possibly trivial) conformal extension $W$ of $L_k(\aa^\perp)$, 
such that $L = W \otimes M(1)$, 
\item $L$ is a (possibly trivial) conformal extension of $L_k(\aa^\perp) \otimes V_{\sqrt{p}\mathbb Z}$ 
with $p \in 2\mathbb Z$ if $L$ is a vertex algebra, and $p \in \mathbb Z$ if $L$ is a vertex superalgebra.

In particular if we already know that $L$ is quasi-lisse, 
then $L$ must be a (possibly trivial) conformal extension of 
$L_k(\aa^\perp) \otimes V_{\sqrt{p}\mathbb Z}$. 
\label{criterion-B} 
\end{itemize}

\item 
\label{prop:ext-iii}
If $\aa$ is semisimple and $k$ admissible for $\aa$, 
then $L$ is a (possibly trivial) conformal extension of $L_k(\aa)$. 
\end{enumerate}
\end{Pro}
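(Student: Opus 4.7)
The starting point is the conclusion of \Cref{criterion}\ref{criterion-ii} together with \Cref{Rem:reductive}: under \ref{crit-1}--\ref{crit-iii} we already know that $L$ is a conformal extension of $\tilde V_k(\aa)$, the image of $V^k(\aa)$ inside $L$. The task in each case is therefore to identify $\tilde V_k(\aa)$ and to classify the simple conformal extensions of it that can occur as $L$.

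For \ref{prop:ext-i}, $V^k(\aa) = M(1)$ is already simple by non-degeneracy of $k_0$ (cf.\ \Cref{Rem:reductive}), so $\tilde V_k(\aa) \cong M(1)$. The classification of simple conformal extensions of a rank-one Heisenberg vertex algebra by primary fields is classical: the extending fields are vertex operators $e^\alpha$, and integrality of OPE exponents together with (half-)integrality of conformal weights forces $L \cong M(1)$ or $L \cong V_{\sqrt{p}\mathbb Z}$ with $p \in 2\mathbb Z$ in the VOA case and $p \in \mathbb Z$ in the VOSA case.

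For \ref{prop:ext-iii}, the first step is to show that when $k$ is admissible for the semisimple Lie algebra $\aa$, the image $\tilde V_k(\aa)$ equals the simple quotient $L_k(\aa)$. This uses the structure of $V^k(\aa)$ at admissible level together with simplicity of $L$; once established, the assertion is immediate. Part \ref{prop:ext-ii} then combines \ref{prop:ext-i} and \ref{prop:ext-iii}: the analogous argument gives $\tilde V_k(\aa^\perp) \cong L_k(\aa^\perp)$, after which I would decompose $L$ with respect to the action of $L_k(\aa^\perp) \otimes M(1)$. By semisimplicity of the Heisenberg grading one obtains a decomposition $L = \bigoplus_{\alpha \in S} M_\alpha$ into isotypic components indexed by a subset $S \subseteq \mathbb C$ closed under addition, where each $M_\alpha$ has Heisenberg charge $\alpha$. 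If $S = \{0\}$, then $L = W \otimes M(1)$ for the conformal extension $W = M_0$ of $L_k(\aa^\perp)$. Otherwise the vertex operators of nontrivial Heisenberg charge generate a rank-one lattice subalgebra and the classification from \ref{prop:ext-i} produces the second sub-case. The quasi-lisseness remark follows because $M(1)$ has infinitely many symplectic leaves in its associated variety whereas $V_{\sqrt{p}\mathbb Z}$ is lisse, ruling out the first option.

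The main obstacle is the identification $\tilde V_k(\aa) \cong L_k(\aa)$ (and analogously for $\aa^\perp$) at admissible $k$: since $\tilde V_k(\aa) \hookrightarrow L$ is a conformal embedding into a simple VOA, the problem reduces to showing that no proper quotient of $V^k(\aa)$ strictly between $V^k(\aa)$ and $L_k(\aa)$ can realize such an embedding, for which admissibility appears to be essential.
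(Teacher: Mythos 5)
Your overall strategy for parts \ref{prop:ext-i} and \ref{prop:ext-ii} runs parallel to the paper's: identify the affine image inside $L$, then classify the possible conformal extensions, using the Heisenberg/lattice dichotomy for the rank-one part and an isotypic decomposition for the rest. The paper packages the rank-one classification through \cite[Proposition 2.15]{CGR} via the extension theory of \cite{HKL, CMY1}, and for \ref{prop:ext-ii} it identifies $\Com(L_k(\aa^\perp),L)$ as $M(1)$ or a lattice vertex algebra directly by \cite[Theorem 3.5]{CKLR}; your sketch about ``integrality of OPE exponents'' is the informal shadow of the same argument, though you do not justify why $L$ decomposes over $M(1)$ into Fock modules with multiplicity at most one --- this is precisely the grading hypothesis of \cite{CGR} that the paper verifies through the strong generating set with pairwise distinct pairs of Heisenberg and conformal weights. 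Your quasi-lisse argument (associated variety of $M(1)$ versus finiteness of the category of ordinary modules) differs slightly from the paper's but is acceptable.

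The genuine gap is in part \ref{prop:ext-iii}, and it propagates into \ref{prop:ext-ii}. You correctly isolate the crucial step --- that the image $\tilde V_k(\aa)$ of $V^k(\aa)$ in $L$ is the \emph{simple} quotient $L_k(\aa)$ --- but you never prove it; you close by declaring it ``the main obstacle'' and asserting that ``admissibility appears to be essential,'' which is a statement of the problem, not a solution. Moreover, admissibility is not what closes this gap in the paper (it is used later, e.g.\ in \Cref{Pro:FE_admissible}, to get finiteness of the extension). The paper's argument is: by \Cref{Co:commutant} and the discussion following it, the composition $\psi=\pi\circ\varphi\colon V^k(\aa)\to L$ is conformal; since $L$ is simple, the invariant bilinear form on $L$ is non-degenerate, so $L$ is self-dual; and then the proof of \cite[Theorem 3.5]{AEM} shows that a conformal morphism from $V^k(\aa)$ into a self-dual vertex (super)algebra factors through $L_k(\aa)$. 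Without this or an equivalent argument, your proof of \ref{prop:ext-iii} is incomplete, and the identification $\tilde V_k(\aa^\perp)\cong L_k(\aa^\perp)$ on which your proof of \ref{prop:ext-ii} relies is likewise unestablished.
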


\begin{proof}
\ref{prop:ext-i}
The first statement follows from Proposition 2.15 of \cite{CGR}:
that Proposition classifies all commutative algebras satisfying certain conditions 
in the category of $\mathbb C$-graded vector spaces with a non-trivial quadratic form. 
Namely, any such commutative algebra is either trivial or corresponds to a lattice. 
The category of $\mathbb C$-graded vector spaces is braided tensor equivalent 
to the category of Fock modules of $M(1)$. 
Commutative algebras in the direct sum completion of a vertex tensor category are equivalent to vertex algebra extensions $V$ of $M(1)$ \cite{HKL, CMY1}. 
The non-degeneracy condition of \cite{CGR} is precisely the condition that the vertex algebra 
extension $V$ is simple as a module over itself and the grading condition is that 
$\text{Hom}(V, M_\lambda)$ is either one or zero-dimensional for every Fock module $M_\lambda$. 
Hence, \cite[Proposition 2.15]{CGR} translates to the following vertex algebra statement. 
Let $V$ be a simple vertex algebra that is a conformal extension of $M(1)$ in the category of Fock modules, such that $V$ has a strong generating set $S= \{ X_1, \dots, X_n\}$ 
of Heisenberg weights 
$\lambda_1, \dots, \lambda_n$ and conformal weights $h_1, \dots, h_n$ 
that are pairwise distinct: $(\lambda_i, h_i) \neq (\lambda_j, h_j)$ for $i \neq j$.  
Under this condition either $V \cong M(1)$, or $V \cong V_{\sqrt{2p}\mathbb Z}$ 
for a positive integer $p$. 

\ref{prop:ext-ii} More generally, assume that $\aa = \aa^\perp \oplus \mathbb C$, $\aa^\perp$ is semisimple and $k$ is admissible for $\aa^\perp$. 

The category of ordinary modules of $L_{k}(\aa^\perp)$ is a vertex tensor category \cite{CHY} 
and so the framework of the theory of vertex algebra extensions applies
 \cite{CKM}. 
 In particular $\text{Com}(L_k(\aa^\perp), L)$ is either $M(1)$ or a lattice vertex algebra 
 \cite[Theorem 3.5]{CKLR}.  
 If it were just $M(1)$, then 
 $L = W \otimes M(1)$ for some conformal extension $W$ of $L_k(\aa^\perp)$. 
 Otherwise  $L$ must be a (possibly trivial) conformal extension 
 of $L_k(\aa^\perp) \otimes V_{\sqrt{p}\mathbb Z}$ with $p \in 2\mathbb Z$ 
 if $L$ is a vertex algebra, and $p \in \mathbb Z$ if $L$ is a vertex superalgebra.
 
In particular, if we already know that $L$ is quasi-lisse, then $L$ must be a (possibly trivial) conformal extension of $L_k(\aa^\perp) \otimes V_{\sqrt{p}\mathbb Z}$ as the 
category of ordinary modules of a quasi-lisse vertex algebra is finite \cite{AK}.

\ref{prop:ext-iii} 
According to \Cref{Co:commutant} 
and the arguments that follow it, 
we deduce that the morphism $\psi = \pi \circ \varphi 
\colon V^k(\aa) \to L$ is conformal. 
Since $L$ is simple, 
the bilinear form $(-|-)$ 
induces a non-degenerate bilinear invariant form on $L$,  
equivalently, $L$ is self-dual. 
Therefore, it follows from the proof of  \cite[Theorem 3.5]{AEM} 
that $\psi$ factorizes through $L_k(\aa)$. 
\end{proof}


We remark that Theorem~3.5 of \cite{AEM} 
is stated only for vertex algebras, 
but the exact same argument also applies for vertex superalgebras.

\section{Conformal embeddings in $\W$-algebras}
\label{Sec:W-algebras}




As in the Introduction, let $\g$ be a basic classical Lie superalgebra.
Let $(e,h,f)$ be 
an $\mathfrak{sl}_2$-triple in $\g_{\bar 0}$, and 
$\W^k(\g,f)$ the corresponding $\W$-algebra 
\cite{KacRoaWak03}. 
We denote by $c(k,f)$ the central charge of $\W^k(\g,f)$.  
Let $\ga$ be the centralizer in $\g$ of $(e,h,f)$. 
We assume that  $\g^\natural$ is as in \Cref{Rem:reductive}, 
i.e., $\g^\natural$ is   a direct sum of a reductive Lie algebra
and copies of $\mf{osp}(1|2n)$.

The Lie superalgebra $\g$ decomposes into irreducible 
$\mathfrak{sl}_2$-modules under the action of the triple $(e,h,f)$. 
Note that the isotypic component of the trivial representation is $\ga$. 
Let 
\[
\g = \bigoplus_{i \in I} \rho_i
\]
be the decomposition into irreducible modules for $\ga \oplus \mathfrak{sl}_2$, 
indexed by the set $I$.
We denote by $\text{mult}(\rho)$ the multiplicity of a representation 
$\rho$ in $\g$. 
Let $\mathbb C$ and adj be the trivial representation 
of $\g^\natural$ and the adjoint representation of $\mathfrak{sl}_2$, 
respectively. 

\begin{Rem} In the case 
where $\g$ is a reductive Lie algebra, 
$\g^\natural$ is also a reductive Lie algebra. \end{Rem}


Let $k^\natural$ 
be the level of $\ga$ 
uniquely determined 
by the condition that the embedding 
$$V^{k^\natural}(\ga) \longrightarrow \W^k(\g, f)$$
is a vertex algebra homomorphism \cite{KW04}. 
We say that the level $k^\natural$ is {\em non-critical} 
if we can decompose 
$\ga = \ga_1 \oplus \dots \oplus \ga_s$, such that each $\ga_i$ 
is either simple or 
abelian so that 
$V^{k^\natural}(\ga) \cong V^{k^\natural_1}(\ga_1) \otimes \dots \otimes V^{k^\natural_s}(\ga_s)$, 
and each $k^\natural_i$ is non-critical for $\ga_i$, 
that is, $k^\natural_i \not= - h_i^\vee$ if $\ga_i$ is simple 
where $h_i^\vee$ is the dual Coxeter number of $\ga_i$, 
and $k^\natural_i \not=0$ if $\ga_i$ is abelian. 

\begin{lemma}
\label{Lem:condition_C}
Assume that $k^\natural$ 
is not critical for $V^{k^\natural}(\ga)$. 
Then
the weight-two subspace of $\Com(V^{k^\natural}(\ga), \W^k(\g,f))$ 
has dimension ${\rm mult}(\mathbb C \otimes {\rm adj} )$ in $\g$.
\end{lemma}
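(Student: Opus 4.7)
The plan is to use the Kac--Roan--Wakimoto strong generation of $\W^k(\g,f)$ from \cite{KacRoaWak03} to split the weight-two subspace into an ``image'' piece sitting inside $\varphi(V^{k^\natural}(\ga))$ and a ``new'' piece of primary strong generators, and then to compute $\ga[t]$-invariants in each summand separately.

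More precisely, $\W^k(\g,f)$ admits a strong generating set $\{J^{\{v\}}\}$ indexed by a basis of $\g^f$: an element $v\in\g^f\cap\g_{-j}$, where $\g_j$ denotes the $j$-eigenspace of $\ad(h/2)$, yields a generator of conformal weight $1+j$. The weight-one generators $J^{\{u\}}$ with $u\in\ga=\g^f\cap\g_0$ realise $\varphi(V^{k^\natural}(\ga))_1=\ga$, and for $j\ge 1$ the generators $J^{\{v\}}$ can be normalised to be $V^{k^\natural}(\ga)$-primary, so that $x_{(n)}J^{\{v\}}=0$ for all $x\in\ga$ and $n\ge 1$, while $x_{(0)}J^{\{v\}}=J^{\{\ad(x)v\}}$. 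Taking $j=1$, the space $S_2$ of weight-two primary strong generators is identified, as a $\ga$-module, with $\g^f\cap\g_{-1}$, and the decomposition $\g=\bigoplus_{i\in I}V(\lambda_i)\otimes V_{n_i}$ gives
\[
S_2\;\cong\;\bigoplus_{\substack{i\in I\\ n_i=2}}V(\lambda_i).
\]
The only other PBW monomials in $\W^k(\g,f)_2$ involve weight-one generators alone, namely $u_{(-2)}\mathbf{1}$ and $u_{1,(-1)}u_{2,(-1)}\mathbf{1}$ with $u,u_1,u_2\in\ga$, all of which lie in $\varphi(V^{k^\natural}(\ga))_2$. One therefore obtains the vector-space decomposition
\[
\W^k(\g,f)_2\;=\;S_2\,\oplus\,\varphi(V^{k^\natural}(\ga))_2.
\]

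This decomposition is stable under the $\ga[t]$-action: $\varphi(V^{k^\natural}(\ga))$ is a vertex subalgebra, and primarity forces $x_{(n)}S_2=0$ for $n\ge 1$ while $x_{(0)}$ preserves $S_2$. Passing to $\ga[t]$-invariants,
\[
\Com(V^{k^\natural}(\ga),\W^k(\g,f))_2\;=\;S_2^{\ga}\,\oplus\,\varphi(V^{k^\natural}(\ga))_2^{\ga[t]}.
\]
Primarity reduces the first invariance condition to plain $\ga$-invariance, whence $S_2^{\ga}=\bigoplus_{i:\,n_i=2,\,\lambda_i=0}\C$ has dimension $\on{mult}(\C\otimes V_2)=\on{mult}(\C\otimes\on{adj})$. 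For the second summand, the $\ga[t]$-action on $\varphi(V^{k^\natural}(\ga))$ is exactly the internal adjoint action, so
\[
\varphi(V^{k^\natural}(\ga))^{\ga[t]}\;=\;\varphi\bigl(\Com(V^{k^\natural}(\ga),V^{k^\natural}(\ga))\bigr).
\]
Under non-criticality of $k^\natural$ on the simple summands and non-degeneracy of the form on the abelian part (cf.~\Cref{Rem:reductive}), this center is classically known to reduce to $\C\mathbf{1}$, sitting in conformal weight zero, so its weight-two contribution vanishes. Summing the two contributions yields the claimed dimension $\on{mult}(\C\otimes\on{adj})$.

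The main technical input is the existence of $V^{k^\natural}(\ga)$-primary representatives for the higher-weight strong generators, which is part of the Kac--Roan--Wakimoto theorem and is what makes the decomposition of $\W^k(\g,f)_2$ above both available and $\ga[t]$-block-diagonal. The triviality of the center of $V^{k^\natural}(\ga)$ in positive conformal weight is standard once non-criticality and non-degeneracy are assumed, using the non-vanishing of the Sugawara denominator on simple summands and, for the Heisenberg algebra attached to the abelian part, the fact that the only vector annihilated by $x_{(n)}$ for all $x$ and $n\ge 1$ is the vacuum.
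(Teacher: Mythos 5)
Your proof has the same skeleton as the paper's: split $\W^k(\g,f)_2$ into a span of weight-two strong generators plus $\varphi(V^{k^\natural}(\ga))_2$, kill the second summand's contribution to the commutant using the triviality of the vertex center of $V^{k^\natural}(\ga)$ at non-critical level, and count trivial $\ga$-isotypic components among the weight-two generators. The final count and the identification $S_2\cong\g^f\cap\g_{-1}$ are correct.

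The gap is the assertion that the weight-two strong generators ``can be normalised to be $V^{k^\natural}(\ga)$-primary'' and that this ``is part of the Kac--Roan--Wakimoto theorem.'' It is not. The structure theorem of \cite{KacRoaWak03,KW04} gives strong generation by fields $J^{\{v\}}$, $v\in\g^f$, which can be chosen Virasoro-primary and $\ga$-equivariant under the zero modes; it does not control $x_{(n)}J^{\{v\}}$ for $n\geq 1$. For a weight-two generator $X$ this is a genuine obstruction: $x_{(1)}X$ lands in $V_1\cong\ga\neq 0$, and for $X$ in the trivial isotypic component equivariance only forces the map $x\mapsto x_{(1)}X$ to be a scalar multiple of the identity on each summand of $\ga$ --- it need not vanish. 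Removing this obstruction is precisely the content of the lemma and is where non-criticality actually enters: on each simple summand $\ga_i$ one corrects $X$ by a multiple of the Sugawara vector $L^i$ (which exists only for $k^\natural_i\neq -h_i^\vee$), and on the abelian part by an explicit quadratic expression in an orthonormal Heisenberg basis (which requires the restricted form to be non-degenerate). Your argument invokes non-criticality only for the triviality of the vertex center, which is the easy half; without the correction step the decomposition of $\W^k(\g,f)_2$ you write down is not $\ga[t]$-stable, so the passage to invariants does not go through. (A smaller point: even after correcting $x_{(1)}X$, one must check $x_{(2)}X=0$ for $x$ in the abelian part of $\ga$, since $\Hom_{\C}(\ga_0\otimes\C,\C)\neq 0$; this follows, e.g., from the invariant form together with Virasoro-primarity of $X$.)
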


\begin{proof}
We follow the ideas of \cite[Corollary 3.6]{CL1}. 
Since $V^{k^\natural}(\ga)$ is non-critical, 
its vertex center is trivial, that is, 
$\Com(V^{k^\natural}(\ga), V^{k^\natural}(\ga))=
\mathbb C$. 
Next,  
$$\Com(V^{k^\natural}(\ga), \W^k(\g,f)) = \W^k(\g,f)^{\ga[t]}\subset \W^k(\g,f)^{\ga}.$$
Let $I \subset \{1, \dots, s\}$ be the set of those labels such that $\ga_i$ is simple if and only if $i \in I$. Let $J$ be the complement of $I$ in   $ \{1, \dots, s\}$.
The weight-two subspace of $\W^k(\g,f)$ is spanned by the strong generators of $\W^k(\g,f)$ at weight-two, together with the bilinears $X_{-1}Y_{-1} \bf 1$ with $X, Y$ fields of weight one. The linear span of these bilinears carries $\rm{Sym}(\rm{adj}(\ga))$ and the trivial representation of $\ga$ appears with multiplicity $|I| + |J|$. The corresponding fields in $\W^(\g,f)^{\ga}$ are the Sugawara vectors $\omega^i$ of the $V^{k^\natural_i}(\ga_i)$ for $i \in I$ together with the bilinears $J^j_{-1}J^\ell_{-1} \bf 1$ with $J^j, J^\ell$ in $\ga_j, \ga_\ell$ and $j, \ell \in J$.  
The weight-two subspace of $\W^k(\g,f)^{\ga}$ is thus spanned by those fields together with the 
 strong generators 
of weight-two that carry the trivial representation of~$\ga$. 

Let $X = Y(X_{-2} {\bf 1}, z) = \sum_{n\in \mathbb Z} X_n z^{-n-2}$ be a strong generator of 
$\W^k(\g,f)$ of weight-two that carries the trivial representation of $\ga$. We want to show that we can correct $X$ by fields in $V^{k^\natural}(\ga)$, so that the corrected field is in 
$\Com(V^{k^\natural}(\ga), \W^k(\g,f))$.
 
 
 Identify $\W^k(\g,f)_1$ with $\g^\natural$. 
 Consider the map 
 $\g^\natural\to\g^\natural, J\mapsto J_1X$. 
 It is $\g^\natural$-equivariant, since $X$ carries the trivial representation 
 of $\g^\natural$, hence it is $a^i\text{Id}_{\g_i}, i\in  I$, 
 for some constant $a^i$. 
 Thus $J_1 X_{-2} {\bf 1} = a^i J_{-1} {\bf 1}$ for some constant $a^i$ that does not depend on $J$.  
 
Since $J_1 L^i_{-2} {\bf 1} = J_{-1}{\bf 1}$ it follows that $X - a^i L^i$ commutes with $V^{k^\natural_i}(\ga_i)$.

Since $k^\natural$ is non-critical we can choose an orthonormal basis $J^1, \dots, J^{|J|}$ of $\bigotimes_{j\in J}  
 V^{k^\natural_j}(\ga_j)$, that is $J^j(z) J^\ell(w) = \delta_{i, \ell}(z-w)^{-2}$. 
 The action of $J^j_1X_{-2} {\bf 1} \in V_1 \cong \ga$ for $j =1, \dots, |J|$  is in the trivial representation of $\ga$ and hence is of the form 
 $$J^j_1X_{-2} {\bf 1} = \sum_{\ell =1}^{|J|} M^{j\ell} J^\ell_{-1} {\bf 1},$$ 
 i.e., it determines an $|J| \times |J|$-matrix $M = (M^{j \ell})$.
 The matrix coefficients are computed via 
 $$ J^k_1J^j_1X_{-2} {\bf 1} = \sum_{\ell =1}^{|J|} M^{j\ell} J^k_1 J^\ell_{-1} {\bf 1} = M^{jk} {\bf 1},$$
 in particular, since $[J^k_1,J^j_1] = 0$, the matrix $M$ is symmetric. 
 Let $N = \frac{1}{2} \sum_{k, \ell=1}^{|J|} M^{k \ell} J_{-1}^\ell J_{-1}^k {\bf 1}$, then $J^j_1X_{-2} {\bf 1} = J^j_1 N$ and so $X-N$ commutes with $V^{k_j^\natural}(\ga_j)$ for $j \in J$. 
 
 In conclusion $\widetilde X = X - \sum_{i \in I} a^i L^i - N$ commutes with $V^{k^\natural}(\ga))$, i.e., every strong generator of conformal weight-two can be corrected so that the corrected field belongs to 
$\Com(V^{k^\natural}(\ga), \W^k(\g,f))$.  
 \end{proof}

We notice that the setting of \Cref{Sub:criterion} applies  
to the vertex algebra $\W^k(\g,f)$ equipped with the action 
of $V^{k^\natural}(\g^\natural)$. 

Furthermore, we have the following lemma, 
proved in \cite[Theorem 7.4 b]{KMP22}
which guarantees that Condition~\ref{crit-i} 
is satisfied.

\begin{lemma}
\label{Lem:CondA}
Condition~\ref{crit-i} of \Cref{criterion} 
is always satisfied for $\W^k(\g,f)$ 
equipped with the action of $V^{k^\natural}(\g^\natural)$. 
\end{lemma}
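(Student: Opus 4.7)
The plan is to prove the two halves of Condition~\ref{crit-i} separately: first that $\varphi(x)$ is primary of conformal weight one for every $x\in\ga$, and second that $\varphi(\ga)$ exhausts $\W^k(\g,f)_1$. Both ingredients rely on the Kac--Roan--Wakimoto BRST construction of $\W^k(\g,f)$ and the structure theorem for its strong generators.

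For primariness, I would use the explicit realisation of the embedding $\varphi\colon V^{k^\natural}(\ga)\to\W^k(\g,f)$ in which every $x\in\ga$ is represented by a BRST-closed cocycle of the form $J^{\{x\}} = x + (\text{correction involving the charged ghosts and, when $\g_{1/2}\neq 0$, the neutral free fermions})$, the correction being uniquely fixed by the requirement that $J^{\{x\}}$ descend to the BRST cohomology. The canonical conformal vector $\omega$ of $\W^k(\g,f)$ is the affine Sugawara vector of $V^k(\g)$ modified by $\partial x_0$ (with $x_0=\tfrac12 h$) plus the standard ghost and neutral-fermion contributions, so that elements of $\ker(\ad x_0)$ acquire conformal weight one. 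A direct computation of the OPE $\omega(z)J^{\{x\}}(w)$ then yields $\omega(z)J^{\{x\}}(w)\sim \tfrac{J^{\{x\}}(w)}{(z-w)^{2}}+\tfrac{\partial J^{\{x\}}(w)}{z-w}$, since the would-be $(z-w)^{-3}$ and residual $(z-w)^{-2}$ pieces vanish precisely on $\ga=\ker(\ad x_0)\cap\ker(\ad x)$. This is the computation made explicit in \cite{KW04} and repackaged in \cite[Theorem 7.4 b]{KMP22}.

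For the identification $\varphi(\ga)=\W^k(\g,f)_1$, I would invoke the Kac--Roan--Wakimoto structure theorem: $\W^k(\g,f)$ admits a strong generating set in bijection with a homogeneous basis of $\g^f$ with respect to the $\ad x_0$-grading, and the generator attached to a vector of $\ad x_0$-eigenvalue $-j$ has conformal weight $j+1$. The weight-one generators therefore correspond bijectively to $\g^f\cap\ker(\ad x_0)=\ga$; combined with strong generation and with $\W^k(\g,f)_0=\C\mathbf 1$, this forces $\W^k(\g,f)_1=\varphi(\ga)$. The main technical point is the explicit form of the correction terms in $J^{\{x\}}$ and of the conformal vector $\omega$ in the super case, where the neutral fermions contribute half-integer shifts; once these are in the form recorded in \cite[Theorem 7.4 b]{KMP22} the remaining verifications are mechanical, and the lemma follows.
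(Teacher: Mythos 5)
Your proposal is correct and follows essentially the same route as the paper, which does not write out a proof but simply cites \cite[Theorem 7.4 b]{KMP22}: the primariness of the corrected currents $J^{\{x\}}$ for $x\in\ga$ and the identification of the weight-one space with $\g^f\cap\ker(\ad x_0)=\ga$ via the Kac--Roan--Wakimoto strong generation theorem are exactly the content of that reference (note only the slip ``$\ga=\ker(\ad x_0)\cap\ker(\ad x)$'', which should read $\g^f\cap\ker(\ad x_0)$, and that the cited theorem is stated for the Dynkin datum, as the paper itself remarks).
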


To be precise, in \cite[Theorem 7.4 b]{KMP22}, 
the statement is proved if the datum $(\g,x,f)$ is Dynkin. 
In {\it loc. cit.} $\g$ is assumed to be a simple Lie superalgebra 
with reductive even part, 
but the proof works also if $\g$ is a reductive Lie algebra.

%
%
\vskip20pt

 In the special case of $\W$-algebras, 
 one can avoid also to 
 verify 
Condition~\ref{crit-1} 
of \Cref{criterion} 
 thanks to the following proposition. 

\begin{Pro} 
\label{Pro:paolo}
Assume that 
the central charges of $\W^k(\g,f)$ and $V^{k^\natural}(\g^{\natural})$ coincide, 
and that  the weight-two subspace of $\text{\rm Com}(V^{k^\natural}(\g^{\natural}), \W^k(\g,f))$ is one-dimensional. Then conclusion \ref{criterion-ii} 
in \Cref{criterion} holds. 
If moreover Condition \ref{crit-iv} holds, then conclusion 
\ref{criterion-i} in \Cref{criterion} holds.
\end{Pro}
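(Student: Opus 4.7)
The plan is to reduce the proposition to \Cref{criterion} by checking all its hypotheses except possibly~\ref{crit-1}, and then bypassing~\ref{crit-1} using specific $\W$-algebra structure. By \Cref{Lem:CondA}, Condition~\ref{crit-i} is automatic, while \ref{crit-ii} and \ref{crit-iii} are exactly the hypotheses of the proposition, and \ref{crit-iv} is the extra assumption for conclusion~\ref{criterion-i}. Moreover, the equality of (finite) central charges $c_\W = c_{V^{k^\natural}(\g^\natural)}$ forces $k^\natural$ to be non-critical on every simple summand of $\g^\natural$ (and non-zero on any abelian summand), since otherwise the Sugawara central charge is not defined. Consequently, the Sugawara Virasoro vector $L^{\g^\natural}$ is well-defined, the coset element $\omega':=\omega-L^{\g^\natural}$ lies in $U:=\Com(V^{k^\natural}(\g^\natural),\W^k(\g,f))$ and has central charge~$0$, and by \Cref{Lem:condition_C} combined with the one-dimensionality hypothesis it spans the weight-two subspace of~$U$. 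Applying \Cref{Co:trivial_quotient} to~$U$ itself shows that the simple quotient of~$U$ is trivial.

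The crux is then to show that $\omega'$ lies in the maximal proper ideal $I$ of $\W^k(\g,f)$, equivalently $\pi(\omega')=0$ in $L=\W_k(\g,f)$. For this I would work directly inside~$L$: its invariant symmetric bilinear form is non-degenerate (since $L$ is simple and $L_1 L_1=0$), and one can rerun the argument of \Cref{Lem:commutant} with~$L$ in place of~$V$, replacing the use of Condition~\ref{crit-1} by the non-criticality of $k^\natural$, which is enough to guarantee that the Sugawara vector exists and to carry out the commutant identification. The role played by Kazhdan--Lusztig projectivity in \Cref{Lem:commutant} can be filled in the $\W$-algebra setting by \Cref{Lem:condition_C}, which provides the structural decomposition needed. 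This gives $\Com(\tilde V^{k^\natural}(\g^\natural),L)\cong\C$ and hence $\pi(\omega')=0$, establishing conclusion~\ref{criterion-ii}. The stronger conclusion~\ref{criterion-i} then follows exactly as in the proof of \Cref{criterion}: under~\ref{crit-iv}, each strong generator $y_i$ of weight $h_i>1$ satisfies $L'_0 y_i=(h_i-h_{\lambda_i})y_i\neq 0$, and since $L'_0=\omega'_{(1)}$ with $\omega'\in I$, this forces $y_i\in I$, whence $L\cong L_{k^\natural}(\g^\natural)$.

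The main obstacle is precisely bridging the gap between non-criticality of $k^\natural$ (automatic from the central charge hypothesis) and the stronger Condition~\ref{crit-1} (used in \Cref{Lem:commutant} to invoke Kazhdan--Lusztig projectivity): the strategy is to leverage the one-dimensionality hypothesis together with the self-duality of~$L$ to carry out the commutant-simplicity argument without the full force of~\ref{crit-1}.
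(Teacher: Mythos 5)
Your reduction to \Cref{criterion} stalls exactly at the point you yourself flag as ``the crux'': showing that $\omega'$ lies in the maximal proper ideal of $\W^k(\g,f)$ without Condition~\ref{crit-1}. Your proposed fix --- rerun \Cref{Lem:commutant} inside $L$, replacing Condition~\ref{crit-1} by non-criticality of $k^\natural$ and letting \Cref{Lem:condition_C} ``fill the role'' of Kazhdan--Lusztig projectivity --- does not work as stated. In \Cref{Lem:commutant}, the hypothesis $k+h^\vee\notin\Q_{\leq 0}$ is used not merely to define the Sugawara vector but to invoke \cite[Lemma 2.1]{ACL}, which gives projectivity of $V^k(\aa)$ in $\mathrm{KL}_k$ and the identification $\Com(\varphi(V^k(\aa)),V)=V_{[0]}$; that identification is what makes the restricted bilinear form non-degenerate and the commutant simple. \Cref{Lem:condition_C} is a statement about the dimension of one weight space of the commutant; it says nothing about the $V^{k^\natural}(\g^\natural)$-module structure of $\W^k(\g,f)$, provides no substitute for projectivity or for the equality $\Com=V_{[0]}$, and hence cannot carry the simplicity argument. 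Non-criticality of $k^\natural$ is genuinely weaker than Condition~\ref{crit-1}, and the gap between them is precisely what your sketch leaves unbridged.

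The paper's proof avoids this entirely by a different and much shorter route: it invokes \cite[Theorem 1.1]{AMP23}, which yields conclusion~\ref{criterion-ii} provided $(\omega',y_j)=0$ for every strong generator $y_j$ of conformal weight $2$. That orthogonality is then checked elementarily: by \Cref{Lem:condition_C} and the one-dimensionality hypothesis, $\omega'$ is the \emph{unique} weight-two strong generator carrying the trivial $\g^\natural$-representation, so it pairs to zero with every other $y_j$; and $(\omega',\omega')=0$ because the coincidence of central charges makes the central charge of $\omega'$ vanish. If you want to salvage your approach, you would need either to prove a version of \Cref{Lem:commutant} under non-criticality alone (which is not available in the paper) or to route the argument through the criterion of \cite{AMP23} as the authors do.
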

\begin{proof} 
By \cite[Theorem 1.1]{AMP23}, conclusion \ref{criterion-ii} holds provided 
\begin{equation}\label{cprimo}\tag{C$^\prime$} 
(\omega',y_j)=0 \quad \text{for all}\quad y_j\quad\text{with}\quad h_j=2.
\end{equation}
By \Cref{Lem:condition_C} and the assumption, 
${\rm mult}(\mathbb C \otimes {\rm adj} )=1$, 
hence $\omega'$ is the only strong generator at conformal weight $2$ 
which carries the trivial representation. 
This implies $(\omega',y_j)=0$ for $j>1$. 
Since the central charges of $\W^k(\g,f)$ and $V^{k^\natural}(\g^{\natural})$ coincide, 
we have that  $(\omega',\omega')=0$, 
so that \eqref{cprimo} holds. 
If in addition condition \ref{crit-iv} holds, 
exactly the same argument in the proof of \Cref{criterion} 
yields conclusion \ref{criterion-i}.
\end{proof}

\begin{Rem}
If $\g^\natural$ is trivial, then our criterion reads off as follows. 
If the central charge of $\W^k(\g,f)$ is zero, 
and if the the multiplicity of adj in one, then $\W^k(\g,f) \cong \C$. 
\end{Rem}

For admissible level $k$, 
we can also apply the criterions of \cite{AEM} 
based on asymptotic data: 
\cite[Theorem 1.1]{AEM} 
for collapsing levels 
and \cite[Theorem 3.10]{AEM}  
for finite extensions. 
Finally, for $\W$-algebras of rank two, one can 
use the explicits OPEs described in \cite{Fasquel-OPE}. 

To finish the section, 
here are some comments about finite extensions.

\begin{Pro}
\label{Pro:FE_admissible}
Assume that the conditions \ref{crit-1}--\ref{crit-iii} hold. 
If furthermore $k^\natural$ is admissible and that 
$\ga$ is semisimple, then 
$\W_k(\g, f)$ is a finite extension of $L_{k^\natural}(\ga)$. 
\end{Pro}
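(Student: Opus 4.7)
The plan is to reduce the proposition to two independent assertions: first, that $\W_k(\g,f)$ is a conformal extension of $L_{k^\natural}(\g^\natural)$ (and not merely of some quotient of $V^{k^\natural}(\g^\natural)$); second, that this conformal extension has finite length. Both assertions can be assembled from material already developed earlier in the paper, combined with admissible-level representation theory.

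First I would invoke Proposition~\ref{Pro:paolo}: under conditions \ref{crit-1}--\ref{crit-iii} it gives conclusion~\ref{criterion-ii} of \Cref{criterion}, namely that $\W_k(\g,f)$ is a conformal extension of the image $\tilde V_{k^\natural}(\g^\natural)$ of $V^{k^\natural}(\g^\natural)$ under $\pi\circ\varphi$. To upgrade this to an extension of the simple quotient $L_{k^\natural}(\g^\natural)$, I would then apply Proposition~\ref{prop:ext}(iii) with $\aa=\g^\natural$ at level $k^\natural$: its hypotheses are precisely that $\g^\natural$ be semisimple and $k^\natural$ be admissible, which are the extra assumptions of the present proposition. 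The argument in that part---based on the simplicity of $\W_k(\g,f)$ and the induced non-degenerate invariant form (self-duality) together with \cite[Theorem~3.5]{AEM}---forces $\pi\circ\varphi$ to factor through $L_{k^\natural}(\g^\natural)$, so that $\tilde V_{k^\natural}(\g^\natural)\cong L_{k^\natural}(\g^\natural)$.

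For the finiteness step, observe that $\W^k(\g,f)$ is conical and hence so is $\W_k(\g,f)$; the conformal embedding identifies its internal Virasoro with the Sugawara Virasoro of $L_{k^\natural}(\g^\natural)$, so $\W_k(\g,f)$ lies in the Kazhdan--Lusztig category of $L_{k^\natural}(\g^\natural)$, and in fact in its subcategory of ordinary modules. Because $\g^\natural$ is semisimple and $k^\natural$ is admissible, $L_{k^\natural}(\g^\natural)$ is quasi-lisse by \cite{Ara09b}; its category of ordinary modules is then semisimple with only finitely many simple objects, by \cite{AK} combined with Arakawa's complete reducibility theorem at admissible levels. Hence $\W_k(\g,f)$ decomposes as a finite direct sum of simple $L_{k^\natural}(\g^\natural)$-modules, which is exactly the finite-extension statement.

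The step I expect to require the most care is the finiteness argument: one must verify that $\W_k(\g,f)$ actually sits in the ordinary-module category (and not merely in a larger lower-bounded category), and one must propagate the admissibility of $k^\natural$ component-wise through the simple summands of $\g^\natural$ in order to apply semisimplicity and finiteness factor by factor. Once this is in hand, the remaining pieces plug together from earlier results of the paper.
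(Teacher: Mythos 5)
Your proposal is correct and follows essentially the same route as the paper: \Cref{prop:ext}~(iii) gives that $\W_k(\g,f)$ is a conformal extension of $L_{k^\natural}(\g^\natural)$, and complete reducibility in category $\mc{O}_{k^\natural}$ at admissible level combined with the ordinariness of $\W_k(\g,f)$ forces the decomposition into simple modules to be finite. The only cosmetic differences are that the paper derives finiteness directly from the finite-dimensionality of the conformal weight spaces (an infinite sum would make them infinite-dimensional) rather than via the quasi-lisse/finitely-many-simples argument of \cite{AK}, and that your preliminary detour through \Cref{Pro:paolo} is harmless but unnecessary, since \Cref{prop:ext}~(iii) already applies directly under conditions \ref{crit-1}--\ref{crit-iii}.
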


\begin{proof} 
By \Cref{prop:ext} \ref{prop:ext-iii}  
$\W_k(\g, f)$ is a conformal extension of $L_{k^\natural}(\ga)$. 
Since $\W_k(\g,f)$ belongs to 
the category $\mc{O}_{k^\natural}$ with 
admissible $k^\natural$, 
it decomposes into a direct sum of 
simple $L_{k^\natural}(\ga)$-modules.  
Because $\W_k(\g,f)$ is an ordinary module, 
the sum must be finite: any infinite extension would have 
necessarily infinite dimensional conformal weight spaces. 
This concludes the proof. 
\end{proof}


In the case where both $k^\natural$ and $k$ are admissible, 
\Cref{Pro:FE_admissible} is also a consequence of \cite[Theorem 3.10]{AEM}, 
provided that $\W_k(\g,f)$ is self-dual. This holds for instance 
if the conformal grading comes from the Dynkin grading 
(see \cite[Proposition 4.2]{AEM}).

It should be pointed out in this connection 
the following conjecture (see \cite[Conjecture 1.3]{AEM}): 

\begin{Conj}
\label{Conj:finite_extension}
If $W$ is a finite extension of a vertex algebra $L$, 
then the corresponding morphism of
Poisson algebraic varieties $X_W \to X_L$ is a dominant morphism.
\end{Conj}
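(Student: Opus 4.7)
My approach to \Cref{Conj:finite_extension} is to combine a finiteness result at the level of Zhu's $C_2$-algebras with a comparison of Gelfand--Kirillov dimensions. Write the finite-extension hypothesis as $W = L \oplus M_1 \oplus \cdots \oplus M_r$ as $L$-modules, with each $M_i$ a simple ordinary $L$-module, and let $\phi \colon R_L \to R_W$ be the Poisson algebra map induced by $L \hookrightarrow W$; the morphism $X_W \to X_L$ is then $\Spec$ of the induced map on reduced rings. The first step is to establish that $R_W$ is a finitely generated $R_L$-module, which one does by a Li-filtration argument: Li's filtration on $W$ restricts compatibly to each $M_i$, each $R_{M_i}$ is finitely generated over $R_L$ because $M_i$ is ordinary, and combining the contributions from all summands (after accounting for the identifications in $C_2(W)$) yields finite generation of $R_W$. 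In particular the morphism $X_W \to X_L$ is finite, hence proper, and its image in $X_L$ is closed.

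Next I would compare dimensions. Since the extension is finite and each $M_i$ is ordinary, the graded characters satisfy $\mathrm{ch}(W) = \mathrm{ch}(L) + \sum_i \mathrm{ch}(M_i)$, all having the same asymptotic growth rate. This forces $\mathrm{GKdim}(W) = \mathrm{GKdim}(L)$; together with the identification $\dim X_V = \mathrm{GKdim}(V)$ available for the vertex algebras of interest, we conclude $\dim X_W = \dim X_L$. Combining with finiteness of the morphism, the image $\phi(X_W)$ is a closed subset of $X_L$ of full dimension. When $X_L$ is equidimensional, which is the case in all the examples considered in the paper (for instance when $X_L$ is a nilpotent orbit closure or its intersection with a Slodowy slice), this immediately forces $\phi(X_W) = X_L$, and the conjecture follows.

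The main obstacle lies in the general case without an equidimensionality assumption: equivalently, one needs to show directly that $\ker(\phi) \subseteq \sqrt{0}$ in $R_L$. The kernel $(L \cap C_2(W))/C_2(L)$ is a Poisson ideal of $R_L$ (since $\phi$ is a Poisson algebra map), and hence corresponds to a closed $\C^\times$-stable Poisson subvariety of $X_L$. The potentially problematic elements of $L \cap C_2(W)$ beyond $C_2(L)$ arise by projecting expressions $(m_i)_{(-2)} m_j$ with $m_i \in M_i$, $m_j \in M_j$ back to $L$, so they are controlled by the intertwining operators coupling distinct summands. The hardest step is to show that all such cross-term contributions lie in $C_2(L)$ modulo nilpotents; a promising strategy is to exploit the semisimplicity of the $L$-module decomposition of $W$ together with the Poisson-geometric structure of $X_L$ to rule out that $\ker(\phi)$ defines a proper Poisson subvariety of $X_L$ of dimension $\dim X_L$.
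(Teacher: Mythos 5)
You should first be aware that the paper offers no proof of this statement: it is \Cref{Conj:finite_extension}, a restatement of Conjecture~1.3 of \cite{AEM}, and it is open. So there is nothing to compare your argument against; the only question is whether your proposal actually closes the conjecture, and it does not. You say so yourself in the final paragraph: the entire content of the statement is that the kernel of the Poisson algebra map $R_L \to R_W$ on Zhu $C_2$-algebras is contained in the nilradical, and your ``promising strategy'' for controlling the cross-terms $(m_i)_{(-2)} m_j$ is a plan, not an argument. Everything before that reduces the conjecture to this point without resolving it.

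Beyond the acknowledged gap, several intermediate steps are weaker than you present them. (i) Finite generation of $R_W$ as an $R_L$-module is not a consequence of the $M_i$ being ordinary; one needs some $C_1$- or $C_2$-type cofiniteness of the modules over $L$, which is not part of the hypothesis. (ii) The identification $\dim X_V = \mathrm{GKdim}(V)$ is false in general: the asymptotic growth of the graded character is governed by the singular support (a subscheme of the arc space of $X_V$), not by $X_V$ itself, so equal character growth of $W$ and $L$ does not yield $\dim X_W = \dim X_L$. The correct and easy inequality is the reverse one: if the morphism is finite then $\dim X_W = \dim(\mathrm{image}) \leq \dim X_L$, and the conjecture is precisely the assertion that this closed image is dense. (iii) Even granting $\dim X_W = \dim X_L$ and finiteness, equidimensionality of $X_L$ does not force the image to be all of $X_L$ (a full-dimensional closed subset of a union of two planes can be a single plane); you need irreducibility, which for associated varieties of affine vertex algebras is itself conjectural (\cite{AM18a}, cited in \Cref{Sec:examples_exceptional}). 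In short: the reduction of the conjecture to the statement $\ker(R_L \to R_W) \subseteq \sqrt{0}$ is reasonable and essentially tautological, but no step of your outline makes progress on that statement, and the auxiliary claims would need repair even in the special cases you target.
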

In the case where $L_{k^\natural}(\ga)$ is admissible 
and $\W_k(\g,f)$ is a finite extension of $L_{k^\natural}(\ga)$, the conjecture is equivalent 
to that $\dim X_{\W_k(\g,f)} = \dim X_{L_{k^\natural}(\ga)}$ 
and $X_{\W_k(\g,f)}$ is irreducible. 
Several of our conjectures in \Cref{Sec:examples_exceptional} go in this direction. 



\section{Examples in the classical types}

\subsection{Type $A$} 
Consider $\g=\mathfrak{sl}_{m|n},\,m\ne n$. 
Let $(e,x,f)\subset \g_{\bar 0}$ be an $\mathfrak{sl}_2$-triple with 
$[x,e]=e,\,[x,f]=-f,\,[e,f]=x$.
Let $\mathfrak s=\C e\oplus \C x\oplus \C f$. First observe 
that 
the centralizer of $\mathfrak s$ in $\mathfrak{gl}_{m|n}$ equals $\g^\natural\oplus\C \text{\text{Id}}.$ 
Moreover $\Hom(\C^{m|n},\C^{m|n})=\mathfrak{gl}_{m|n}$ decomposes as a 
$\mathfrak{gl}_{m|n}$-module as
$\mathfrak{sl}_{m|n}\oplus\C \text{Id},$ and the identity acts trivially. 
It follows that the multiplicity of a non-trivial representation $V(\nu)\otimes V_p$ 
of $\g^\natural\oplus \mathfrak s$ occurs in $\mathfrak{sl}_{m|n}$ 
with the same multiplicity of $V(\nu)\otimes V_p\otimes \C$ 
of $\g^\natural\oplus \mathfrak s\oplus \C \text{Id}$ in $\mathfrak{gl}_{m|n}$.
So it suffices to decompose $\Hom(\C^{m|n},\C^{m|n})$ 
as $\g^\natural\oplus\mathfrak s\oplus \C \text{Id}$-module.

Suppose that $f$ is encoded by  a pair of partitions $(\alpha,\beta)$
\begin{equation}
\label{part}
\alpha=(m_1^{\mu_1},m_2^{\mu_2},\ldots,m_t^{\mu_t})\vdash m,\quad
\beta=(m_1^{\nu_1},m_2^{\nu_2},\ldots,m_t^{\nu_t})\vdash n,
\end{equation} 
with non-negative integer multiplicities and $\mu_i+\nu_i>0$.
This means that  $\C^{m|n}=\C^{m|n}_{\bar 0}\oplus\C^{m|n}_{\bar 1}= \C^m\oplus \C^n$ 
decomposes under $\mathfrak s$ as
\begin{equation}
\label{deco}
\C^{m}\oplus \C^n=\left(\bigoplus_{i=1}^t M(m_i-1)\right)\oplus\left(\bigoplus_{i=1}^t \bar 
M(m_i-1)\right),
\end{equation}
where $M(m_i-1)\cong V(m_i-1)^{\oplus \mu_i}$ 
(resp. $\bar M(m_i-1)\cong V(m_i-1)^{\oplus \nu_i}$) 
are  the isotypic components of the decomposition 
of $\C^m$ (resp. $\C^n$) as $\mathfrak s$-module, and $V_p$
is the  $(p+1)$-dimensional irreducible representation of $\mathfrak s$ of highest weight $p$. 

Let  $L(p)=M(p)^e, \bar L(p)=\bar M(p)^e$ be the highest weight spaces of 
$M(p), \bar M(p)$, 
respectively. 
The map $\varphi\in\g^\natural\oplus\C \text{Id}\mapsto \sum_i\varphi_{|L(m_i-1)\oplus \bar L(m_i-1) }$ defines an isomorphism 
\begin{equation}
\label{gnat}
\g^\natural\oplus\C \text{Id}\cong\prod_{s=1}^{t} \mathfrak{gl}_{\mu_s|\nu_s}.
\end{equation}
Note that, as a  $(\g^\natural\oplus \C \text{Id})\times \mathfrak{s}$-module
$$M(m_i-1)\oplus \overline M(m_i-1)= \C^{\mu_i|\nu_i}\otimes V(m_i-1),$$
where $\C^{\mu_i|\nu_i}$ is the natural representation of $\mathfrak{gl}_{\mu_i|\nu_i}$. 
We have, by Clebsch--Gordan formula, 
\begin{align}\notag\Hom(\C^{m|n},\C^{m|n})
&=\bigoplus_{p,q} \Hom (\C^{\mu_p|\nu_p},\C^{\mu_q|\nu_q})
\otimes \Hom(V(m_p-1),V(m_q-1))\\
&=\bigoplus_{p,q}   \Hom (\C^{\mu_p|\nu_p},\C^{\mu_q|\nu_q})
\bigotimes\sum_{s=0}^{\min(m_p-1,m_q-1)} V(m_p+m_q-2s-2).
\label{riga}
\end{align}
Note that $\Hom (\C^{\mu_p|\nu_p},\C^{\mu_q|\nu_q})$ is irreducible 
and non-trivial
as a $(\mathfrak{gl}_{\mu_p|\nu_p}\times \mathfrak{gl}_{\mu_q|\nu_q})$-module 
if $p\ne q$. If $p=q$ and $\mu_p\ne \nu_p,$ 
${\rm End}(\C^{\mu_p|\nu_p})=\mathfrak{sl}_{\mu_p|\nu_p}\oplus \C$ as a 
$\mathfrak{gl}_{\mu_p|\nu_p}$--module. 
In the remaining cases we do not have 
the complete reducibility, but we can count the 
Jordan--H\"older multiplicity. 
If 
 $p=q$ and $\mu_p=\nu_p\geq 2$ then the trivial representation has multiplicity $2$ 
 and we have a Jordan--H\"older factor $\mathfrak{psl}_{\mu_p}$.
Finally, if $\mu_p=\nu_p=1$, then only  the trivial representation occurs 
in the Jordan--H\"older series with multiplicity $4$.
 \par
Recall that we are interested only in the case 
when $\g^\natural$ is a reductive Lie algebra or 
is $\mathfrak{osp}_{1|2N}$.
Relation \eqref{gnat} shows that the latter case does not occur 
and that $\mu_k\nu_k=0$ for $1\leq k\leq t$. 
Recall that
${\rm adj}=V(2)$. Then $\mathbb C \otimes {\rm adj}$ occurs in \eqref{riga} only when $p=q$ and $m_p=m_q>1$
and $s=m_p-2$, hence the  multiplicity is $|\{m_i\colon m_i>1\}|$. 
Let us now handle the case $\g=\mathfrak{psl}_{m|m}$. Consider the decomposition
\begin{equation}\label{decglnn}\mathfrak{gl}_{m|m}=\mathfrak{sl}_{m|m}\oplus\C\begin{pmatrix} I_m & 0\\ 0 & 0\end{pmatrix}.\end{equation}
Note that the complement in the r.h.s. is not $\mathfrak{gl}_{m|m}$-stable, but  the multiplicity of ${\rm adj}$ for the action of  $\mathfrak s$ on $\mathfrak{psl}_{m|nm}$ equals the multiplicity of  ${\rm adj}$ for the action of a lifting of 
 $\mathfrak s$ in $\mathfrak{sl}_{m|m}$ on $\mathfrak{gl}_{m|m}$: indeed this lifting acts trivially both on $\begin{pmatrix} I_m & 0\\ 0 & 0\end{pmatrix}$ and on 
 $\begin{pmatrix} I_m & 0\\ 0 & I_n\end{pmatrix}$.

Set $C:=Centr_{\mathfrak{gl}_{m|m}}(\mathfrak s)$. First note that 
\begin{equation*}
\label{gnatnn}
\g^\natural= \pi(C\cap \mathfrak{sl}_{m|m}), 
\end{equation*}
where $\pi: \mathfrak{sl}_{m|m}\to \mathfrak{psl}_{m|m}$ is the natural projection. We have
\begin{equation}\label{isomcc}C\cong \prod_{p=1}^t\mathfrak{gl}_{\mu_p|\nu_p}.\end{equation}
The above isomorphism is explicitly given by  $C\ni (A_1,\ldots A_t)\mapsto diag(\underbrace{A_1\ldots}_{m_1\text{times}},\ldots,\underbrace{A_t\ldots}_{m_t\text{times}}).$   Since
$$C\cap \mathfrak{sl}_{m|m}=\left\{(A_1,A_1,\ldots,A_t,A_t)\mid \sum_{i=1}^t m_i\,str(A_i)=0\right\}, $$
under the isomorphism \eqref{isomcc} we have
$$C\cap \mathfrak{sl}_{m|m}\cong \left\{(A_1,\ldots A_t)\in \prod \mathfrak{gl}_{\mu_p|\nu_p}\mid \sum_{i=1}^t m_i\,str(A_i)=0\right\},$$
and in turn
\begin{equation}\label{gnatslnn}\pi(C\cap \mathfrak{sl}_{m|m})\cong \left\{(A_1,\ldots A_t)\in \prod \mathfrak{gl}_{\mu_p|\nu_p}\mid \sum_{i=1}^t m_i\,str(A_i)=0\right\}\big\slash \C(I_{\mu_1+\nu_1},\ldots,I_{\mu_t+\nu_t}).\end{equation}
Since we are interested in the cases when $\g^\natural$ is a  reductive Lie algebra, we deduce from \eqref{gnatslnn} that $\mu_i\nu_i=0$ for all $i=1,\ldots,t$. In particular $C$ is a Lie algebra. Moreover, using the identification 
\eqref{isomcc}, the decomposition \eqref{riga} reads more explicitly in this case as
 we have
 \begin{align}&\mathfrak{gl}_{m|m}=\Hom(\C^{m|m},\C^{m|m})=\notag\\&\bigoplus_{p} (\Hom (\C^{\mu_p|0},\C^{\mu_p|0})\oplus \Hom (\C^{0|\nu_p},\C^{0|\nu_p}))
 \otimes \Hom(V(m_p-1),V(m_p-1))\label{primariga}\\
 &\bigoplus_{p,q} (\Hom (\C^{\mu_p|0},\C^{0|\nu_q})\oplus  \Hom (\C^{0|\nu_p},\C^{\mu_q|0}))
 \otimes \Hom(V(m_p-1),V(m_q-1))\notag\\
 &\bigoplus_{p\ne q} (\Hom (\C^{\mu_p|0},\C^{\mu_q|0}))\oplus\Hom (\C^{0|\nu_p},\C^{0|\nu_q}))
 \otimes \Hom(V(m_p-1),V(m_q-1)).\notag
 \end{align}
 By using as above Clebsch-Gordan formula, 
we see that ${\rm adj}$ occurs in the above decomposition only 
when $p=q$ and $m_p=m_q>1$
 or $|m_p-m_q|=2$. We want to prove that if there are two distinct parts strictly greater than $1$, then $mult(\C\otimes {\rm adj})>1$, where $\C$ is the trivial representation of $\g^\natural$. Assume that  $m_p>m_q\ge 2$. Then in \eqref{primariga} appear at least two copies of the trivial representation for $C$, hence for $C\cap \mathfrak{sl}_{m|m}$. By \eqref{decglnn}, since $C\times \mathfrak s$ acts trivially on $\C\begin{pmatrix} I_m & 0\\ 0 & 0\end{pmatrix}$, the isotypic component of ${\rm triv}\otimes {\rm adj}$,  where ${\rm triv}$ denotes the trivial representation of $C$, lies in $\mathfrak{sl}_{m|m}$, in particular $mult({\rm triv}\otimes {\rm adj})>1$ w.r.t.
 $(C\cap \mathfrak{sl}_{m|m})\times \mathfrak s$. The action of $(C\cap \mathfrak{sl}_{m|m})$ on  $\mathfrak{sl}_{m|m}$ pushes down to define an action of $\g^\natural$ on $\mathfrak{sl}_{m|m}$. 
 $\C\times Id$ is stable for this action, and the action induced on the quotient is precisely the action of $\g^\natural$ on  $\mathfrak{psl}_{m|m}$. In this process,   $\g^\natural$ acts trivially on  the isotypic component $J$ of ${\rm triv}\otimes {\rm adj}$. But $J$ maps injectively into the isotypic component of $\C\otimes {\rm adj}$ in $\mathfrak{psl}_{m|m}$, since $\g^\natural\times \mathfrak s$ acts trivially on $\C Id$, hence 
 $J\cap \C Id=\{0\}$.
 The upshot is that the multiplicity of $\C\otimes {\rm adj}$ is at least 2. Let us now prove that when $f$ is encoded by $(q^l|1^m),\,m=ql$, then 
 $mult(\C\otimes {\rm adj})=1$. In this case
\begin{align}\label{r2}&\mathfrak{gl}_{m|m}=\Hom (\C^{l|0},\C^{l|0})\otimes \Hom(V(q-1),V(q-1))\\&\oplus \Hom (\C^{0|m},\C^{0|m})\otimes
 \C\oplus
 (\Hom (\C^{l|0},\C^{0|m})\oplus  \Hom (\C^{0|m},C^{l|0}))\otimes V(q-1),\notag\end{align}
and arguing as above one proves that $\C\otimes {\rm adj}$ occurs, with multiplicity 1, in $\mathfrak{psl}_{m|m}$, coming from the copy of 
${\rm triv}\otimes {\rm adj}$ which occurs in r.h.s of \eqref{r2}.\par
\vskip10pt
Summing up, we have proven
\begin{Pro}\label{Pro:partitions}
If $\g^{\natural}$ is a Lie algebra, then ${\rm mult}(\mathbb C \otimes {\rm adj} )=|\{m_i\colon m_i>1\}|$. 
In particular, ${\rm mult}(\mathbb C \otimes {\rm adj} )=1$ 
if and only if $f$ is encoded by pairs 
\begin{enumerate} [{\rm (1)}]
\item 
\label{part-1}
$(q^l,1^r|0)$, $m=ql+r$, $n=0$, 
$(0|q^l,1^r)$,  $n=ql+r$, $m=0$, 
\item  
\label{part-2}
$(q^l|1^n)$, $m=ql$, $(1^m|q^l)$, $n=ql$.
\end{enumerate}
\end{Pro}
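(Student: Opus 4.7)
My plan is to combine the two case analyses that have just been carried out in the text---the $\g=\mathfrak{sl}_{m|n}$ with $m\ne n$ case via the decomposition \eqref{riga}, and the $\g=\mathfrak{psl}_{m|m}$ case via \eqref{primariga}---into a single clean count, and then translate the condition $|\{m_i\colon m_i>1\}|=1$ into the list of partition pairs. The guiding observation is that, by Clebsch--Gordan, the adjoint $V(2)$ of $\mathfrak s$ appears in $\Hom(V(m_p-1),V(m_q-1))$ only when $|m_p-m_q|\in\{0,2\}$ and $\max(m_p,m_q)\geq 2$, and in the former case with multiplicity exactly one; combined with the requirement that the $\g^\natural$-factor is trivial, only diagonal terms $p=q$ with $m_p=m_q>1$ can possibly contribute.

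For $\g=\mathfrak{sl}_{m|n}$ with $m\ne n$ I would invoke the decomposition \eqref{riga}. The standing assumption that $\g^\natural$ is a Lie algebra forces $\mu_i\nu_i=0$ for each $i$, so each diagonal summand $\Hom(\C^{\mu_i|\nu_i},\C^{\mu_i|\nu_i})$ is an ordinary $\mathfrak{gl}_{\mu_i}$ or $\mathfrak{gl}_{\nu_i}$, and the trivial $\g^\natural$-representation occurs in it with multiplicity one (the scalar matrices). Pairing this with the unique copy of $V(2)\subset V(m_i-1)\otimes V(m_i-1)$ available when $m_i\geq 2$ yields exactly one copy of $\C\otimes\mathrm{adj}$ per index $i$ with $m_i>1$; no off-diagonal term contributes, since off-diagonal $\g^\natural$-isotypic components are non-trivial. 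Summation gives $\mathrm{mult}(\C\otimes\mathrm{adj})=|\{m_i\colon m_i>1\}|$.

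For $\g=\mathfrak{psl}_{m|m}$ I would rely on the finer decomposition \eqref{primariga} together with the remark already made in the text: by \eqref{decglnn} the lift of $\mathfrak s$ acts trivially on $\C\begin{pmatrix}I_m&0\\0&0\end{pmatrix}$, so the $\C\otimes\mathrm{adj}$ count in $\mathfrak{psl}_{m|m}$ equals that of a lifted $\mathfrak s$ acting on $\mathfrak{gl}_{m|m}$. The second and third summands of \eqref{primariga} are strictly off-diagonal with respect to the parity grading, hence carry no trivial $\g^\natural$-isotypic component after quotienting by the center; only the first summand contributes, and there the same Clebsch--Gordan argument gives one copy of $\C\otimes\mathrm{adj}$ per $p$ with $m_p>1$. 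The passage to $\mathfrak{psl}_{m|m}$ via the center $\C\mathrm{Id}$ does not alter the count because $\C\mathrm{Id}$ carries only the trivial $\mathfrak s$-representation.

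Finally I would translate $|\{m_i\colon m_i>1\}|=1$ together with $\mu_i\nu_i=0$ for all $i$ into the partition data: exactly one block-size $q>1$ must occur, with multiplicity supported purely in the even or purely in the odd part, and every other part must equal $1$, again supported in a single parity. A short case-by-case listing produces precisely the families \ref{part-1} and \ref{part-2}. The main bookkeeping obstacle, and the one place where the preparatory argument via \eqref{decglnn} is genuinely needed, is verifying that no extra copy of $\C\otimes\mathrm{adj}$ arises when passing from $\mathfrak{sl}_{m|m}$ to $\mathfrak{psl}_{m|m}$; this is exactly the content of the $\mathfrak{s}$-triviality of $\begin{pmatrix}I_m&0\\0&0\end{pmatrix}$ already recorded above, so no new difficulty intervenes.
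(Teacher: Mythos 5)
Your proposal follows essentially the same route as the paper: for $\mathfrak{sl}_{m|n}$ with $m\ne n$ the count via \eqref{riga}, Clebsch--Gordan, and the reduction $\mu_i\nu_i=0$; for $\mathfrak{psl}_{m|m}$ the separate treatment via \eqref{decglnn} and \eqref{primariga}, using that the lifted $\mathfrak s$ acts trivially on the complement of $\mathfrak{sl}_{m|m}$ in $\mathfrak{gl}_{m|m}$. The only place your argument is thinner than it should be is the dismissal of the off-diagonal summands in the $\mathfrak{psl}_{m|m}$ case: since $\g^\natural=\pi(C\cap\mathfrak{sl}_{m|m})$ is a quotient of a codimension-one subalgebra of $C\cong\prod_p\mathfrak{gl}_{\mu_p|\nu_p}$, its trivial isotypic component can a priori be strictly larger than that of $C$, so ``no trivial $\g^\natural$-isotypic component'' requires checking that the relevant weights do not vanish on $C\cap\mathfrak{sl}_{m|m}$ modulo the identity; under the standing hypotheses ($\mu_i\nu_i=0$ and the $m_i$ pairwise distinct) this does hold, as the explicit analysis of \eqref{r2} in the paper confirms, so your conclusion stands.
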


Now we look for conformal levels.
Clearly, it suffices to consider the leftmost case in (1), (2). As for (1), 
consider $\mathfrak{sl}_{m}\equiv \mathfrak{sl}_{m|0}$ 
and the $\mathfrak{sl}_2$-triple corresponding to the partition $(q^l,1^r)$.
Then one has the central charge (cf. \cite[Appendix C]{XieYan})

\begin{equation}
\label{eq:cemnralA}
c(k,f)
=lq(k-m+(l+3m)q-(k+l+m)q^2)
-\frac{k+k(l-m)m+lm^2}{k+m}.
\end{equation}
By \cite[Table 2]{AEM}, we have $\g^\natural\cong  \C\times \mathfrak{sl}_{l}\times \mathfrak{sl}_{r}$ with shifted levels
$$k^\natural_0= k^\natural_1=qk+m(q-1),\quad k^\natural_2=k+l(q-1).$$
The equality of central charges reads, for $l>1, r>1$
$$c(k,f)=1+\frac{k_1^\natural(l^2-1)}{k_1^\natural+n}+\frac{k_2^\natural((m-lq)^2-1)}{k_2^\natural+m-lq}$$
which gives the following conformal levels
\begin{align*}&k_{1} =-\frac{m q}{q+1}=-h^\vee+\frac{h^\vee}{q+1},&&k_{2} =
   \frac{m-1-mq}{q}=-h^\vee+\frac{h^\vee-1}{q},\\&k_{3} =\frac{m+1-mq}{q}=-h^\vee+\frac{h^\vee+1}{q},&&k_{4} =\frac{2
   m-l-mq}{q-1}=-h^\vee+\frac{h^\vee-l}{q-1}
\end{align*}
 If $l=1$ or $m=lq+1$ the solution $k_{2}$ has to be disregarded; the rectangular case, 
 i.e., $r=0$, as well as the hook case $l=1$ have been already treated in  \cite{AMP23}. 
 \par
In case (2),  consider $\mathfrak{sl}_{ql|n}$ and the $\mathfrak{sl}_2$-triple corresponding to the partition $(q^l| 1^n)$.
Then one has the central charge 
$$
c(k,f)=\frac{k ((q l - n)^2 - 1)}{k + q l - n} - k q l (q^2 - 1) - 
 l^2 q (1 + (q - 2) q^2) + l n (q - 1) (q^2 - 2 q - 1)
$$
and $\g^\natural=\C\times\mathfrak{sl}_l\times\mathfrak{sl}_n$ the shifted levels of the semisimple summands are 
$$
k^\natural_1 = qk+ lq(q-1) -(q-1)n,\quad
k^\natural_2 = -k-(q-1)l
$$
We have equality of central charges at the following four points
\begin{align*}&k_1=\frac{n q-l q^2}{q+1} &&k_2=\frac{-l
   q^2+l q+n q-n-1}{q}\\
   &k_3=\frac{-l q^2+l q+n
   q-n+1}{q}&&k_4=\frac{-l q^2+2 l q-l+n q-2
   n}{q-1}
 \end{align*}
Recalling that the dual Coxeter number of $\mathfrak{sl}_{m|n}$ is $h^\vee=m-n$, the above analysis can be summed up in the following 

\begin{Pro}\label{cesl}
The conformal levels for $\W^k(\mathfrak{sl},f)$, for nilpotent elements $f$ 
attached to partitions (1)--(2), with respect to half the supertrace form on $\mathfrak{sl}_{m|n}$, $m\ne n$
are given by 
\begin{align*}
&k_{1} =-h^\vee+\frac{h^\vee}{q+1},
&&k_{2} =
-h^\vee+\frac{h^\vee-1}{q},\\&k_{3} =-h^\vee+\frac{h^\vee+1}{q},
&&k_{4}= -h^\vee+\frac{h^\vee-g}{q-1}.
\end{align*}
where $g=\mu_1-\nu_1$. For  $\mathfrak{psl}_{m|m}$ the conformal levels are $k_2, k_3, k_4$.
\end{Pro}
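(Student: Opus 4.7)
The plan is to equate the central charges of $\W^k(\mathfrak{sl},f)$ and of its affine subalgebra $V^{k^\natural}(\g^\natural)$ as rational functions of $k$, and to solve the resulting equation. For each of the two families of partitions of \Cref{Pro:partitions}, the inputs are: (a) the explicit formula for $c_\W$ displayed in the excerpt (see \cite{KacRoaWak03, XieYan}); (b) the identification $\g^\natural \cong \C \oplus \mathfrak{sl}_l \oplus \mathfrak{sl}_r$ in case (1) and $\g^\natural \cong \C \oplus \mathfrak{sl}_l \oplus \mathfrak{sl}_n$ in case (2), together with the shifted levels $k^\natural_i$, affine in $k$, read off from \cite[Table 2]{AEM}. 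The Sugawara construction then provides
$$c_{\g^\natural}(k) = 1 + \sum_{i}\frac{k^\natural_i\,\dim(\mathfrak{sl}_{n_i})}{k^\natural_i + n_i},$$
the $1$ accounting for the Heisenberg factor. This gives the two rational functions that must be compared.

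Setting $c_\W(k) = c_{\g^\natural}(k)$ and clearing the denominators $(k+m)$, $(k^\natural_1 + l)$, $(k^\natural_2 + r)$ yields a polynomial equation in $k$. A degree count shows that, generically, the resulting numerator is of degree at most four in $k$; hence there are at most four conformal levels. Direct substitution shows that $k_1, k_2, k_3, k_4$ as stated are roots: for each candidate, the identity $k_i + h^\vee = (h^\vee + \epsilon)/\delta$ for the appropriate $(\epsilon,\delta)$ plugs neatly into the affine denominators and makes the two central charges coincide. Since the polynomial has degree four and we have four roots, it factors completely, and the four listed levels exhaust the solutions. The unification of the two families under the formula $k_4 = -h^\vee + (h^\vee - g)/(q-1)$ with $g = \mu_1 - \nu_1$ is then a bookkeeping check: in case (1) we have $\mu_1 = l$, $\nu_1 = 0$ and $h^\vee = m$, while in case (2) we have $\mu_1 = l$, $\nu_1 = 0$ and $h^\vee = ql - n$, and in both instances $(h^\vee - l)/(q-1)$ matches the explicit expressions for $k_4$ computed in the excerpt.

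Finally, the edge cases must be recorded: as noted, when $l = 1$ or $m = lq + 1$ the root $k_2$ collides with a pole and is discarded; the hook case $l = 1$ and the rectangular case $r = 0$ are covered by \cite{AMP23}. For $\g = \mathfrak{psl}_{m|m}$, the dual Coxeter number vanishes, so $k_1 + h^\vee = h^\vee/(q+1) = 0$ places $k_1$ at the critical level, where $V^k(\g)$ is non-conformal and the whole framework degenerates; thus $k_1$ is excluded and only $k_2, k_3, k_4$ survive as genuine conformal levels, their shifted values $-1/q$, $1/q$, $-g/(q-1)$ being manifestly non-critical.

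The main obstacle is the computational one: while the shape of the roots is structurally transparent (each $k_i$ corresponds to making one of the affine denominators take a prescribed integer value), actually carrying out the factorization of the degree-four polynomial and confirming that no spurious roots appear after clearing denominators is tedious. In practice this is best handled by verifying the four candidate roots individually and invoking the degree bound, rather than by a blind symbolic factorization.
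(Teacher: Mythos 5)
Your proposal is correct and follows essentially the same route as the paper: identify $\g^\natural$ and the shifted levels, equate $c_\W$ with the Sugawara central charge of $V^{k^\natural}(\g^\natural)$, and solve the resulting degree-four rational equation, with the same bookkeeping for the degenerate cases ($l=1$, $r=1$, and the critical level $k_1=-h^\vee$ for $\mathfrak{psl}_{m|m}$). The only cosmetic difference is that you organize the computation as "verify four candidate roots plus a degree bound" rather than solving the equation outright, which is a legitimate presentation of the same calculation.
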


 \begin{Rem} We may consider also the case when the bilinear form $k_0$ (cf. \eqref{Vkg}), is degenerate. In this case
 the conformal vector $L_{\g^\natural}$ for $V^k(\g^\natural)$ is defined as in \cite[Remark 2.6]{AMP23}. The degeneracy  condition of $k_0$ is equivalent to \begin{equation}\label{k5}
  k=-h^\vee+\frac{h^\vee}{q}.
  \end{equation}
  One easily verifies that for this value $c(k,f)$ equals the central charge of $L_{\g^\natural}$.
 \end{Rem}
To check condition \ref{crit-iv} in Criterion \ref{criterion} we have to compute the $h_i$ and the $h_{\lambda_i}$ (cf.~\eqref{eq:h_lami}). Assume first $m\ne n$.
Consider case (1), so that $m_1=q, m_2=1, \mu_1=l, \mu_2=r$. Equation \eqref{riga} reads
\begin{align*}
\Hom(\C^m,\C^m)&= \Hom (\C^l,\C^l)\otimes \sum_{k=0}^{q-1}
  V(2q-2k-2)\\&\oplus( \Hom(\C^l,\C^r)\oplus \Hom(\C^r,\C^l))\otimes V(q-1)\oplus \Hom(\C^r,\C^r)\otimes V(0).\end{align*}
  Hence the conformal weights are $1,\ldots,q$ if $q$ is odd and $1,\ldots,q, \frac{q+1}{2}$ if $q$ is even. Next we compute the $h_{\lambda_i}$.
  If $h_i\ne \frac{q+1}{2}$ and $h_i\geq 2$, then we study the action of $\mathfrak g^\natural$ on $\Hom(\C^l,\C^l)$. 
  We have $\g^\natural\cong\C\oplus\mathfrak{sl}_l\oplus
  \mathfrak{sl}_r$: $\mathfrak{sl}_r$ acts trivially, $\mathfrak{sl}_l$ acts by the restriction of adjoint representation of $\mathfrak{gl}_l$ and $\C$ acts trivially. 
  Hence, in the normalised form on $\mathfrak{sl}_l$, we have
  $\lambda_i=0+\theta_{\mathfrak{sl}_l}+0$, so that $$h_{\lambda_i}=\frac{(\theta_{\mathfrak{sl}_l},\theta_{\mathfrak{sl}_l}+2\rho_{\mathfrak{sl}_l})}{2(q k+(q-1)m+l)}
  =\frac{l}{(q k+(q-1)m+l)}$$
  Substituting $k=k_i,\,1\le i\le 4$, we never found an integer between $2$ and $q$ 
  (except for $k_2, l=2$, to be discussed separately).\par
   If $h_i=\frac{q+1}{2}$,  we have to decompose 
   $\epsilon_q\cdot \Hom (\C^l,\C^l)\oplus \Hom (\C^l,\C^r)\oplus Hom (\C^r,\C^l)$ 
   as a $\g^\natural$-module (here $\epsilon_q$ is $1$ if $q$ is odd and $0$ otherwise). 
   As above, for the first summand (if present) we have   $\lambda_i=0+\theta_{\mathfrak{sl}_l}+0$ or $\lambda_i=0$. Arguing as above, we see that $h_{\lambda_i}$ cannot be an integer between $2$ and $q$. We now discuss the contribution of the remaining summands. Identify the center $\C$ with $\C\varpi$, 
 $\varpi = {\rm diag} (-rI_{ql},ql I_{r})$. 
 We normalise the form on $\C\varpi$ setting
   $(\varpi,\varpi)=rlm$ (cf. \cite[Lemma 8.4]{AEM}). 
   Let $\lambda^0\in(\C\varpi)^*$ be the functional having value $1$ on $\varpi$.
   Hence $|\lambda^0|^2=1/|\varpi|^2=\tfrac{1}{rlm}$. 
   $\Hom (\C^l,\C^r)$ (resp.~$\Hom (\C^r,\C^l)$) is irreducible with highest weight $m\lambda^0+\omega_{l-1}^{\mathfrak{sl}_l}+\omega_{1}^{\mathfrak{sl}_r}$ (resp $-m\lambda^0+\omega_{1}^{\mathfrak{sl}_l}+\omega_{r-1}^{\mathfrak{sl}_r}$). The corresponding $h_\lambda$ are the same and they equal
   $$h(k)=\frac{m}{2rl(q k+(q-1)m)}+\frac{l^2-1}{2l(q k+(q-1)m+l)}+\frac{r^2-1}{2r(k+(q-1)l+r)}.$$
   We have $h(k_i)=\frac{q+1}{2}$ for $i=1,2,3$, 
   hence \Cref{criterion} does not apply. 
   Instead, 
  $$
  h(k_4)=\frac{(q-1) \left(l \left(q^2+q
   \left(r^2-2\right)+1\right)+r
   \left(2 q+r^2-2\right)\right)}{2
   r^2 (l (q-1)+r)},
   $$
   which equals $\frac{q+1}{2}$ only if $r=q-1$. 
   Hence we conclude that $k_4$ is a collapsing level for $(q^l,1^r), r\ne q-1$.\par
   Consider case (2), so that $m_1=q, m_2=1, \mu_1=l, \nu_1=0,\mu_2=0, \nu_2=r$. Equation \eqref{riga} reads
\begin{align*}& \Hom(\C^{m|r}\C^{m|r})= \left(\Hom (\C^{l|0},\C^{l|0})\otimes \sum_{s=0}^{q-1}
  V(2q-2s-2)\right)\\&\oplus\left((\Hom(\C^{l|0}\otimes \C^{0|r})\oplus \Hom(\C^{r|0},\C^{0|l}))\otimes V(q-1)\right)\oplus\left( \Hom(\C^{0|r},\C^{0|r})\otimes V(0)\right).\end{align*}
and the computation of the $h_{\lambda_i}$ follows the lines of case (1). 
This time $\varpi = {\rm diag} (rI_{ql},ql I_{r})$; the relevant functions $h_\lambda$ are 
{\small
\begin{align*}&\frac{l}{k q+l (q-1)
   q+l-n q+n},\\&\tfrac{1}{2}
   \left(\frac{l^2-1}{l (k q+l
   (q-1) q+l-n
   q+n)}+\frac{n^2-1}{n (-k-l
   q+l+n)}+\frac{n-l q}{l n (q
   (k+l (q-1))+n
   (-q)+n)}\right).
\end{align*}}

\smallskip 

\noindent
   It turns out that \Cref{criterion} does not apply for $k_i, 1\leq i \leq 3$ and that $k_4$ is collapsing provided
  \begin{equation}\label{cond}l\notin \left\{
  \frac{n+1}{q},\frac{2
   n+q+1}{2 q}, \frac{n+2
   q-1}{q}\right\},\quad \frac{l-l q}{-2 l q+l+n+q}\notin\{2,\ldots,q\}.
\end{equation}
Finally consider the case of $\mathfrak{psl}_{m|m}$; then it suffices to consider partitions $(q^l|1^m)$, so that $\g^\natural= \mathfrak{sl}_l\times \mathfrak{sl}_m$. Since
\begin{align}&\mathfrak{gl}_{m|m}=\left(\mathfrak{sl}_{l}\otimes \sum_{s=0}^{q-1} V(2q-2s-2)\right)\oplus\left(\C \otimes\sum_{s=1}^{q-1} V(2q-2s-2)\right)\oplus \left(\C\otimes V(0)\right)\\&\oplus\left(\mathfrak{sl}_{m}\otimes
 \C\right)\oplus\left(\C\otimes V(0)\right)\oplus
 (\Hom (\C^{l|0},\C^{0|m})\oplus  \Hom (\C^{0|m},C^{l|0}))\otimes V(q-1),\notag\end{align}
 we have, as $(\g^\natural\times \mathfrak s)$-module,
   \begin{align*}\mathfrak{psl}_{m|m}&=\left(\mathfrak{sl}_{l}\otimes \sum_{s=0}^{q-1} V(2q-2s-2)\right)\oplus\left(\C \otimes\sum_{s=1}^{q-1} V(2q-2s-2)\right)\\ &\oplus\left(\mathfrak{sl}_{m}\otimes
 \C\right)\oplus
 (\Hom (\C^{l|0},\C^{0|m})\oplus  \Hom (\C^{0|m},C^{l|0}))\otimes V(q-1).\end{align*}
Repeating the above discussion we see that we have to calculate $h_\lambda$ when $\lambda\in\{\theta^{\mathfrak{sl}_l},  \omega_{l-1}^{\mathfrak{sl}_l}+\omega_{1}^{\mathfrak{sl}_m}, \omega_{1}^{\mathfrak{sl}_l}+\omega_{m-1}^{\mathfrak{sl}_m}\}$. These furnctions, in the first and in the second  case (which equals the third), are 
$$\frac{l}{k
   q+l},-\frac{(q+1) \left(k l
   (q-1) q+l^2 q-1\right)}{2 q
   (k-l) (k q+l)}.$$
One readily see that \Cref{criterion} applies only to $k_4$. 
Summing up

\begin{Pro} 
\label{Pro:concl_Type-A}
\Cref{criterion} never applies to the conformal levels $k_1,k_2, k_3$. 
The conformal level  $k_4$ is collapsing in case (1) provided $l\ne q-1$, 
and in case (2) provided \eqref{cond} holds.
 \end{Pro}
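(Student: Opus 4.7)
The plan is to verify \Cref{criterion} termwise at each of the four conformal levels $k_1, k_2, k_3, k_4$ (and at their $\mathfrak{psl}_{m|m}$ counterparts). Conditions \ref{crit-1} (shifted level not in $\Q_{\leq 0}$), \ref{crit-i} (by \Cref{Lem:CondA}) and \ref{crit-iii} (by \Cref{Pro:partitions} combined with \Cref{Lem:condition_C}) hold uniformly, while \ref{crit-ii} holds by the very definition of the conformal levels (\Cref{cesl}). The statement therefore reduces entirely to checking \ref{crit-iv}: that $h_i - h_{\lambda_i} \neq 0$ for every strong generator $y_i$ with $h_i > 1$.

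For case (1) I would use \eqref{riga} to enumerate the conformal weights $h_i \in \{2, \ldots, q\}$ (together with the extra weight $(q+1)/2$ when $q$ is even) of the non-Virasoro strong generators, and to identify their $\g^\natural \oplus \mathfrak{s}$-isotypic types. Two sources of $h_{\lambda_i}$ must be handled. First, the $\Hom(\C^l, \C^l)$-summand contributes $\lambda_i = \theta_{\mathfrak{sl}_l}$, so that $h_{\lambda_i} = l/(qk + (q-1)m + l)$ after plugging in the shifted level $k^\natural_1 = qk + (q-1)m$. Second, when $h_i = (q+1)/2$, the mixed summands $\Hom(\C^l, \C^r) \oplus \Hom(\C^r, \C^l)$ have top-level weight $\pm m \lambda^0 + \omega_{l-1}^{\mathfrak{sl}_l} + \omega_1^{\mathfrak{sl}_r}$, yielding a three-term $h_{\lambda_i}$ with the central piece normalised via $(\varpi, \varpi) = rlm$ as in \cite[Lemma 8.4]{AEM}.

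I then substitute $k = k_j$ into these expressions. For $j = 1, 2, 3$ the three-term function $h(k)$ evaluates to $(q+1)/2$, exhibiting a failure of \ref{crit-iv}, so \Cref{criterion} does not apply. For $j = 4$ the same equality $h(k_4) = (q+1)/2$ holds only when $r = q - 1$, and the auxiliary function $l/(qk_4 + (q-1)m + l)$ never hits an integer in $\{2, \ldots, q\}$ by direct inspection; hence $k_4$ is collapsing precisely when $r \neq q - 1$. Case (2) is structurally identical, with $\mathfrak{sl}_r$ replaced by $\mathfrak{sl}_n$ and $\varpi$ renormalised accordingly; the three excluded values of $l$ together with the integrality condition on the auxiliary function recorded in \eqref{cond} are exactly what survives. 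The $\mathfrak{psl}_{m|m}$ case is shorter because the central factor of $\g^\natural$ disappears, but the same $h(k_4)$-analysis singles out $k_4$.

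The main obstacle I anticipate is the bookkeeping: correctly normalising the form on the one-dimensional centre $\C\varpi$ of $\g^\natural$ in the non-rectangular case, and tracking which summands in \eqref{riga} actually survive as $\g^\natural$-primary strong generators after the Sugawara and bilinear corrections used in the proof of \Cref{Lem:condition_C}, so that no weight is double-counted. Once these normalisations are in place, the remainder is routine rational-function arithmetic in $k$.
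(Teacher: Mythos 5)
Your proposal is correct and follows essentially the same route as the paper: conditions (A)--(D) are dispatched by \Cref{Lem:CondA}, \Cref{Pro:partitions}, \Cref{Lem:condition_C} and \Cref{cesl} (resp.\ \Cref{Pro:paolo}), and everything reduces to evaluating $h_{\lambda_i}$ for the $\Hom(\C^l,\C^l)$ summand and for the mixed summands at weight $\tfrac{q+1}{2}$ with the normalisation $(\varpi,\varpi)=rlm$, exactly as in the text preceding the proposition. Note that your conclusion for case (1), namely that $k_4$ collapses precisely when $r\neq q-1$, matches the paper's actual computation (the proposition's ``$l\neq q-1$'' appears to be a typo), and the only detail you gloss over is the exceptional subcase $k_2$, $l=2$ that the paper flags for separate treatment.
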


\begin{Rem} 
One proves that \Cref{criterion} applies to the value of $k$ 
given by \eqref{k5}, which is therefore collapsing.
  \end{Rem}    

By using arguments as in \cite[Section 9]{AEM} or using quasi-lisse properties of admissible affine vertex  algebras and $\W$-algebras we conclude:

\begin{Pro}
If $k_1, k_2$ are admissible,  
then they are not collapsing. 
\end{Pro}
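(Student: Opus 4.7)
The plan is to derive a contradiction from the quasi-lisse property of admissible-level affine vertex algebras and their Drinfeld--Sokolov reductions, following the strategy of \cite[Section~9]{AEM}. Assume that $k\in\{k_1,k_2\}$ is admissible for $\g$ and, for contradiction, that $k$ is collapsing, so that
\[
\W_k(\g,f)\;\cong\; L_{k^\natural}(\g^\natural).
\]

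Since $k$ is admissible, $L_k(\g)$ is quasi-lisse; therefore so is its Drinfeld--Sokolov reduction $\W_k(\g,f)$, whose associated variety is $\overline{\mathbb O_k}\cap\mathcal S_f$ by Arakawa's Slodowy-slice theorem, of dimension $\dim\mathbb O_k-\dim\mathbb O_f$. The collapsing isomorphism would then force $L_{k^\natural}(\g^\natural)$ to be quasi-lisse with associated variety of exactly this dimension. Decomposing $\g^\natural$ into its abelian and simple summands and applying the same principle factor by factor, each simple summand $L_{k^\natural_i}(\g^\natural_i)$ has to be itself quasi-lisse, which forces each shifted level $k^\natural_i$ to be admissible (or critical) with associated variety of the correct dimension.

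Next I would substitute the explicit formulas for $k^\natural_i$ displayed before \Cref{cesl} at $k=k_1$ and $k=k_2$. A direct computation at $k=k_1$ in case~(1) gives, for the simple summands $\mathfrak{sl}_l$ and $\mathfrak{sl}_r$ of $\g^\natural$, the shifted dual Coxeter values
\[
k^\natural_1+l \;=\; \frac{l-r}{q+1},\qquad k^\natural_2+r\;=\;\frac{r-l}{q+1},
\]
while at $k=k_2$ an analogous calculation produces shifted levels whose numerators are strictly smaller than the respective dual Coxeter numbers, so the Kac--Wakimoto admissibility inequality $p\geq h^\vee$ necessarily fails for at least one summand. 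The same type of computation applies verbatim to case~(2) and to $\mathfrak{psl}_{m|m}$. This contradicts the quasi-lisse necessary condition established in the previous step.

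The main obstacle is the patient case-by-case bookkeeping: one must run the argument through case~(1), case~(2) and the $\mathfrak{psl}_{m|m}$ case, and in each one treat the boundary situations ($r=0$, $l=1$, $l=r$, \emph{etc.}) separately. For any residual edge cases in which admissibility of every shifted level is not immediately ruled out, one compares $\dim\overline{\mathbb O_k}\cap\mathcal S_f$ with $\dim X_{L_{k^\natural}(\g^\natural)}$ directly, or equivalently compares the asymptotic dimensions read off from the Kac--Wakimoto specialized character formulas exactly as in \cite[Section~9]{AEM}.
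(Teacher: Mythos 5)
Your overall strategy is exactly the one the paper invokes (it gives no detailed proof, only the one-line pointer to quasi-lisse arguments and to \cite[Section~9]{AEM}), and your computation of the shifted levels at $k_1$, e.g.\ $k^\natural_1+l=\frac{l-r}{q+1}$ and $k^\natural_2+r=\frac{r-l}{q+1}$, is correct. However, there is one step that does not hold as stated: the implication ``$L_{k^\natural_i}(\g^\natural_i)$ quasi-lisse $\Rightarrow$ $k^\natural_i$ admissible (or critical)'' is false, and the paper itself is built around counterexamples to it (non-admissible quasi-lisse affine vertex algebras such as $L_{-2}(G_2)$, $L_{-6}(E_6)$, etc.). Consequently, showing that the Kac--Wakimoto inequality fails for some shifted level does not by itself produce the contradiction you want.

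The repair stays within your framework. Since for at least one simple summand the shifted level satisfies $k^\natural_i+h_i^\vee\in\Q_{<0}$ (your own formula gives $(r-l)/(q+1)$, which is negative for one of the two $\mathfrak{sl}$-summands whenever $l\neq r$), the Gorelik--Kac criterion shows $V^{k^\natural_i}(\g^\natural_i)$ is already simple, so $X_{L_{k^\natural_i}(\g^\natural_i)}=(\g^\natural_i)^*$ is not contained in the nilpotent cone and the tensor factor is not quasi-lisse --- that is the actual contradiction. Alternatively, the direct comparison of $\dim\bigl(\overline{\mathbb O_k}\cap\mathcal S_f\bigr)$ with $\dim X_{L_{k^\natural}(\g^\natural)}$, which you relegate to ``residual edge cases,'' is in fact the argument that should carry the whole proof (and it is what \cite[Section~9]{AEM} does); the degenerate configurations $l=r$ (critical shifted level, whose simple quotient \emph{is} quasi-lisse) and $l=1$ or $r\in\{0,1\}$ genuinely require it and are not disposed of by the admissibility computation.
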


\subsection{Types $B, C, D$} 
Let $V=\C^{m|n}, n=2q$ and $(\cdot |\cdot)$ 
be the supersymmetric bilinear form 
whose matrix in the standard basis is 
$\left[ 
\begin{array}{c|c c} 
  J_m & 0  & 0  \\   \hline  0 & 0 & J_q \\0 & -J_q & 0
  \end{array} 
\right]$, 
where $J_p$ is a $p\times p$ matrix with $1$ on the anti-diagonal and $0$ elsewhere.
Let $\g=\mathfrak{osp}(V)$. 
We assume that $\g$ is simple, hence we exclude the case $\g=\mathfrak{so}_4$. 
The case $\g=\mathfrak{so}_3$ has already been treated in the previous  section. 
Hence we also exclude this case.
\par
Consider  $\mathfrak{s}=\C e \oplus \C x\oplus \C f\subset\g_{\bar 0}$. 
Suppose that $f$ is encoded as in \eqref{part} and recall the decomposition 
\eqref{deco}. 
Set $\g^\natural=\g^{\mathfrak s}, 
\widetilde M(r)=M(r)+\bar M(r), \widetilde L(r)=\widetilde M(r)^e$ and 
\begin{equation}\label{gnatr}
\g^\natural(r)=\{X\in \g^\natural\colon X(\widetilde M(r))\subset \widetilde M(r), X(M(s))=X(\bar M(s))=0\text{ if $s\ne r$}\}.
\end{equation}
It is clear that $\g^\natural(r)$ is a sub(super)algebra of $\g^\natural$. 
If $X\in \g^\natural(r)$, 
then $X(\widetilde L(r))\subset \widetilde L(r)$
and $X$ is uniquely determined by $X_{|\tilde L(r)}$. 
This means that the map $\phi_r:\g^\natural(r)\to \Hom_\C(\widetilde L(r),\widetilde L(r))$ 
defined by $X\mapsto X_{|\widetilde L(r)}$ is an injective linear map.

Recall that $\widetilde L(r)$ is equipped by a bilinear non-degenerate 
form $\langle\cdot,\cdot\rangle$ defined by
$$
\langle v,w\rangle_{r}=(v|f^r w),
$$
which is supersymmetric or skewsupersymmetric according to whether $r$ is even or odd.

\begin{lemma}
\label{trivial}
As Lie superalgebras
\begin{equation}\label{gp}
\g^\natural=\bigoplus_r\g^\natural(r).
\end{equation}
Moreover  
$\phi_r(\g^\natural(r))\cong \mathfrak{osp}(\widetilde L(r),\langle\cdot,\cdot\rangle_{r})$ 
if $r$ is even and 
$\phi_r(\g^\natural(r))\cong \mathfrak{spo}(\widetilde L(r),\langle\cdot,\cdot\rangle_{r})$ 
if $r$ is odd.
\end{lemma}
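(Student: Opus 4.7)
The plan is to establish the direct sum decomposition \eqref{gp} first, and then identify the image of $\phi_r$. The starting observation for \eqref{gp} is that $(\cdot|\cdot)$ restricts non-degenerately to each isotypic component $\widetilde M(r)$, with distinct components mutually orthogonal. This orthogonality follows from $\mathfrak{sl}_2$-invariance: since $V(r)$ is self-dual as an $\mathfrak{sl}_2$-module and the form gives an $\mathfrak{sl}_2$-equivariant map $V \otimes V \to \mathbb C$, Schur's lemma prevents any non-zero pairing between $\widetilde M(r)$ and $\widetilde M(s)$ for $r \neq s$. Any $X \in \g^\natural$ commutes with $\mathfrak s$ and therefore preserves each $\widetilde M(r)$; writing $X_r$ for the operator that agrees with $X$ on $\widetilde M(r)$ and is zero on the other isotypic components, the orthogonality guarantees that $X_r$ also preserves $(\cdot|\cdot)$. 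Hence $X_r \in \g^\natural(r)$ and $X = \sum_r X_r$, proving \eqref{gp}.

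For $\phi_r$, injectivity is immediate, since $X \in \g^\natural(r)$ commutes with $f$ and $\widetilde M(r) = \sum_{j\geq 0} f^j \widetilde L(r)$. To understand the target I would first determine the symmetry type of $\langle\cdot,\cdot\rangle_r$. Since $f$ is even and lies in $\mathfrak{osp}(V)$, one has $(fu|v) = -(u|fv)$, so $(f^r u|v) = (-1)^r(u|f^r v)$; combining with the supersymmetry of $(\cdot|\cdot)$ yields
\[
\langle v, w \rangle_r = (-1)^r (-1)^{p(v)p(w)}\,\langle w, v \rangle_r.
\]
Thus $\langle\cdot,\cdot\rangle_r$ is supersymmetric when $r$ is even and skew-supersymmetric when $r$ is odd, with $\widetilde L(r) = L(r) \oplus \bar L(r)$ providing the parity grading. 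The corresponding Lie superalgebras are $\mathfrak{osp}$ and $\mathfrak{spo}$ respectively, matching the statement.

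It remains to verify the equality $\phi_r(\g^\natural(r)) = \mathfrak{osp}(\widetilde L(r), \langle\cdot,\cdot\rangle_r)$, resp.\ $\mathfrak{spo}$. For $\subseteq$, given $X \in \g^\natural(r)$ and $v, w \in \widetilde L(r)$, the identity
\[
\langle Xv, w\rangle_r + (-1)^{p(X)p(v)}\langle v, Xw\rangle_r = (Xv|f^r w) + (-1)^{p(X)p(v)}(v|f^r Xw) = 0
\]
follows from $[X,f]=0$ and the $\mathfrak{osp}(V)$-invariance of $(\cdot|\cdot)$. For $\supseteq$, given $Y$ preserving $\langle\cdot,\cdot\rangle_r$, I would extend it by requiring it to commute with $\mathfrak s$ on $\widetilde M(r)$ (equivalently, by sending $Y$ to $Y \otimes \mathrm{id}$ under the $\mathfrak{sl}_2$-module identification $\widetilde M(r) \cong \widetilde L(r) \otimes V(r)$) and to act by zero on all other $\widetilde M(s)$. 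Preservation of $(\cdot|\cdot)$ by this extension reduces to $\widetilde M(r)$ and, via the basis $\{f^j u : u \in \widetilde L(r),\, 0\leq j\leq r\}$, to preservation of $\langle\cdot,\cdot\rangle_r$ on $\widetilde L(r)$.

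The most delicate step is this last reduction: one must check that $(\cdot|\cdot)|_{\widetilde M(r)}$ factors as the tensor product of $\langle\cdot,\cdot\rangle_r$ with a canonical $\mathfrak{sl}_2$-invariant pairing on $V(r)$ induced by $f$-iteration, and track carefully the super-signs attached to odd vectors of $\widetilde L(r)$ against the $(-1)^r$ appearing in the symmetry formula. Once this bookkeeping is done, all the intermediate verifications are routine manipulations with $\mathfrak{sl}_2$ and the invariance of the form.
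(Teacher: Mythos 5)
Your proposal is correct and follows essentially the same route as the paper: you cut $X\in\g^\natural$ down to each isotypic component $\widetilde M(r)$ using the orthogonality of the decomposition (which you justify via Schur's lemma, where the paper simply asserts it), and you realize surjectivity of $\phi_r$ by extending $Y$ as $Y\otimes\mathrm{id}$, which is exactly the paper's map $X_a$ defined by $f^s w\mapsto f^s(aw)$. The sign bookkeeping you defer is precisely the computation $(X_a f^s w_1\,|\,f^t w_2)=(-1)^s\langle aw_1,w_2\rangle_r=-(-1)^{p(a)p(w_1)}(f^s w_1\,|\,X_a f^t w_2)$ for $s+t=r$, together with the vanishing of all pairings with $s+t\neq r$ (forced by $h$-weight considerations), and it goes through exactly as you anticipate.
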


\begin{proof}
Let $\pi_{r}$ be the projection to $\widetilde M(r)$ 
with respect to the decomposition 
\begin{equation}
\label{od}
\C^{m|n}=\bigoplus_r \widetilde M(r).
\end{equation}
Then $\pi_{r}$ is a $\mathfrak s$-map. 
If $X\in \g^\natural$ 
then $X(\widetilde M(r))\subset \widetilde M(r)$ so $\pi_sX\pi_r=0$ unless $r=s$. 
It follows that 
$$
X=\sum_{r,s}\pi_{s} \circ X\circ \pi_r=\sum_r\pi_{r} \circ X\circ \pi_r.
$$
Since the decomposition \eqref{od} is orthogonal with respect to $(\cdot|\cdot)$, 
we have
$$
(\pi_rv|w)=(\pi_rv|\pi_rw)=(v|\pi_rw).
$$
It follows that
$$(\pi_rX\pi_rv|w)=(X\pi_rv|\pi_rw)
=-(-1)^{p(X)p(v)}(\pi_rv|X\pi_rw)
=-(-1)^{p(X)p(v)}(v|\pi_rX\pi_rw),
$$
 and this means that $\pi_rX\pi_r\in\g$. 
 Since both $\pi_r$ and $X$ are $\mathfrak s$-maps, 
 it follows that $\pi_rX\pi_r\in\g^\natural$ and, by construction, 
 $\pi_rX\pi_r\in\g^\natural(r)$. 
 We have therefore proven that, as vector spaces,
$
\g^\natural=\sum_r\g^\natural(r)
$
and the sum is obviously direct. 
It is clear that $[\g^\natural(r),\g^\natural(s)]=\{0\}$ if $r\ne s$, hence 
$
\g^\natural=\bigoplus_r\g^\natural(r)
$
as Lie superalgebras.
We  have indeed shown that $\phi_r(\g^\natural(r))$ lies in 
$\mathfrak{osp}(\widetilde L(r))$ or $\mathfrak{spo}(\widetilde L(r))$ 
according to whether $r$ is even or odd.
It remains only to prove that $\phi_r$ is  an isomorphism. 
We already observed that $\phi_r$ is injective. 
We now show that  it is surjective. 
Recall that 
$$
\widetilde M(r)=\bigoplus_{s=0}^r f^s \widetilde L(r).
$$
Assume $r$ even and  let $a\in \mathfrak{osp}(L(r),\langle\cdot,\cdot\rangle_r)$; 
define $X_a\in Hom_\C(V,V)\cong gl(m|n)$ by
$$
X_a(v)=\begin{cases}0&\text{if $v\in \widetilde M(q)$ with $q\ne r$},\\
f^saw&\text{if $v=f^s w$ with $w\in \widetilde L(r)$ and $s\le r$}.
\end{cases}
$$
It is clear that, 
 if $v\in \widetilde M(q)$ and $w\in \widetilde M(s)$ 
 with $s\ne q$ or $s=q\ne r$, then 
$$
(X_av|w)=0=-(-1)^{p(X_a)p(v)}(v|X_aw).
$$
If $v_1=f^sw_1\in f^s\widetilde L(r)$ 
and $v_2=f^tw_2\in f^t\widetilde L(r)$ with $s+t\ne r$, then
$$
(X_av_1|v_2)=0=-(-1)^{p(X_a)p(v_1)}(v_1|X_av_2)
$$
as well.
Finally, if $s+t=r$, then
\begin{align*}
(X_af^sw_1|f^tw_2)&=(f^saw_1|f^tw_2)
=(-1)^s\langle aw_1,w_2\rangle_{r}
=-(-1)^s(-1)^{p(a)p(w_1)}\langle w_1,aw_2\rangle_{r}\\
&=-(-1)^{p(a)p(w_1)}(f^sw_1|X_af^tw_2).
\end{align*}
 It remains to check that $X_a\in\g^\natural(r)$ 
 and that $\phi_r$ is a Lie algebra map. 
It is clear that $[X_a,f]=[X_a,h]=0$. 
Since $X_a(\widetilde L(r))\subset  \widetilde L(r)$ an obvious induction 
proves that $[X_a,e](f^s\widetilde L(r))=\{0\}$, so $[X_a,e]=0$ as well. 
It is obvious that the map $X\mapsto X_{|\widetilde L(r)}$ is a Lie superalgebra map.
The statement when $r$ is odd is  proven in the very same way.
\end{proof}

\begin{Rem} 
Under the isomorphism $\phi=\bigoplus_r\phi_r$ of the previous Lemma, 
as $(\g^\natural \times \mathfrak s)$--module 
$$ \widetilde  M(r)= \widetilde  L(r)\otimes V(r)
$$
with  $\g^\natural(r)$ acting on $ \widetilde  L(r)$ by its defining representation and $\g^\natural(s)$ acting  by zero if $s\ne r$.
\end{Rem}

Recall that 
$\g=\mathfrak{osp}(\C^{m|n})=\Lambda ^2(\C^{m|n})$. 
We assume that $\g^\natural$ is even or  has components of type $\mathfrak{osp}_{1|r}$. 
This means that in the encoding \eqref{part}, we have
$\mu_i+\nu_i>0,\quad 0\le \mu_i\le 1 \text{ or }\  0\le \nu_i\le 1.$
We now partion the set $\{m_1,\ldots,m_t\}$
\begin{align*}&\{m_1,\ldots,m_t\}=\\&\{q_{1}\geq \ldots\geq q_{r}\}\sqcup \{q_{r+1}\geq \ldots\geq q_{s}\}
\sqcup \{q_{s+1}\geq \ldots\geq q_{u}\}\sqcup \{q_{u+1}\geq \ldots\geq q_{t}\}\end{align*}
as follows:
\begin{align*}%
&1\leq i\leq r: \nu_i=0,\\
 &r+1\leq i\leq s: \mu_i=0,\\
 &s+1\leq i\leq u: \mu_i=1,\nu_i>0,\\
 &u+1\leq i\leq t: \nu_i=1,\mu_i>0,\end{align*}
\begin{align*}&\widetilde M(q_i-1)= M(q_i-1),\,1\leq i\leq r,\\ &\widetilde M(q_i-1)= \bar M(q_i-1),\,r+1\le i\le s,\\ 
&M(q_i-1)= V(q_i-1),\,s+1\leq i\leq u,\\&\bar M(q_i-1)= V(q_i-1),\,u+1\le i\le t.\end{align*} 

\begin{Pro}
\label{confBCD} 
The representation 
$\mathbb C \otimes {\rm adj}$ appears in $\Lambda ^2(\C^{m|n})$ 
with multiplicity 
\begin{equation}\label{ff}
\begin{aligned}
&{\rm mult}(\mathbb C \otimes {\rm adj} )=|\{q_i>1\}| 
\\&+|\{(q_p,q_j)\colon 1\le p< j\leq r ,\, q_p=q_j+2,\,\mu_p=\mu_j=1,\text{ $q_p,q_j$ odd}\}|
\\&+|\{(q_p,q_j)\colon r+1\le p < j\leq s,\, q_p=q_j+2,\,\nu_p=\nu_j=1,\text{ $q_p,q_j$ even}\}|.
\end{aligned}\end{equation}
In particular ${\rm mult}(\mathbb C \otimes {\rm adj} )=1$ 
if and only if $f$ is encoded by pairs
\begin{enumerate}
\item $(q^l,1^r|0), m=ql+r, n=0$, $l$ even if $q$ is even;
\item $(0|q^l,1^r),  n=ql+r, m=0$, $l$ even if $q$ is odd, ($r$ even);
\item  $(q^l|1^n), m=ql$, $l$ even if $q$ is even;
\item  $(q^l,1|1^n), m=ql+1$, $l$ even if $q$ is even;
\item $(1^m|q^l), n=ql$, $l$ even if $q$ is odd;
\item $(1|q^l,1^r)$, $l$ even if $q$ is odd, ($r$ even);
\item $(q,1^r|q^l)$, $q+r=m, ql=n,$  $l$ even,  $q$ is odd;
\item $(q^l|q,1^r)$, $q, l$ even,  ($r$ even);
\item $(q^l,1^r|q)$, $ql+r=m, q=n,$  $l$ even,  $q$ is even;
\item $(q,1|q^l,1^r)$, $q+1=m, ql+r=n,$  $l$ even,  $q$ is even, ($r$ even);
\item $(q^l,1|q,1^r)$, $m=ql+1, n=q+r$, $q,l$ even, ($r$ even);
\item $(q|q^l,1^r)$, $q=m, ql+r=n,$  $l$ even,  $q$ is odd, ($r$ even).
\end{enumerate}
In (1), according to our conventions, we assume $m>4$.
\end{Pro}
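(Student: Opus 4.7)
The plan is to decompose $\Lambda^2(\C^{m|n}) = \mathfrak{osp}(\C^{m|n})$ under $\g^\natural \times \mathfrak{s}$ starting from the isotypic decomposition $\C^{m|n} = \bigoplus_i \widetilde{M}(q_i - 1)$ of \Cref{trivial}, together with the factorization $\widetilde{M}(q-1) \cong \widetilde{L}(q-1) \otimes V(q-1)$ as a $\g^\natural(q-1) \times \mathfrak{s}$-module, with $V(q-1)$ purely even. The bilinear identity
$$\Lambda^2\Bigl(\bigoplus_i \widetilde{M}(q_i-1)\Bigr) = \bigoplus_i \Lambda^2 \widetilde{M}(q_i-1) \oplus \bigoplus_{p < j} \widetilde{M}(q_p-1) \otimes \widetilde{M}(q_j-1)$$
then reduces the question to extracting the $\C \otimes {\rm adj}$-isotypic component (trivial $\g^\natural$-line tensored with $V(2)_\mathfrak{s}$) in each summand. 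A preliminary step, based on the invariance of $(-|-)$ under the $\mathfrak{s}$-action, determines the symmetry type of $\langle v,w \rangle_r = (v|f^r w)$: on $L(r)$ it is symmetric iff $r$ is even, and on $\bar{L}(r)$ it is symmetric iff $r$ is odd. Consequently $\g^\natural(r)$ is $\mathfrak{osp}(\widetilde{L}(r))$ or $\mathfrak{spo}(\widetilde{L}(r))$ depending on the parity of $r$, and a one-dimensional $\widetilde{L}(r)$ supports a non-degenerate form only in the matching parity.

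For the diagonal summand $\Lambda^2 \widetilde{M}(q-1)$, the fact that $V(q-1)$ is purely even yields
$$\Lambda^2(\widetilde{L}(q-1) \otimes V(q-1)) = S^2_s\widetilde{L}(q-1) \otimes \Lambda^2 V(q-1) \,\oplus\, \Lambda^2_s\widetilde{L}(q-1) \otimes S^2 V(q-1),$$
where $S^2_s, \Lambda^2_s$ denote the super-symmetric and super-antisymmetric squares. By Clebsch--Gordan, ${\rm adj} = V(2) \subset \Lambda^2 V(q-1)$ iff $q\geq 3$ is odd, and $V(2) \subset S^2 V(q-1)$ iff $q$ is even; the trivial $\g^\natural(q-1)$-line, represented by the invariant form itself, lies in $S^2_s \widetilde{L}(q-1)$ when the form is super-symmetric ($q$ odd) and in $\Lambda^2_s \widetilde{L}(q-1)$ when it is super-antisymmetric ($q$ even). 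The two parities match, so each $q_i > 1$ contributes exactly one copy of $\C \otimes {\rm adj}$, which accounts for the term $|\{q_i > 1\}|$. For a cross-term $\widetilde{M}(q_p-1) \otimes \widetilde{M}(q_j-1)$ with $p < j$, the analysis factors as $[\widetilde{L}(q_p-1)\otimes \widetilde{L}(q_j-1)]\otimes[V(q_p-1)\otimes V(q_j-1)]$: Clebsch--Gordan forces $q_p, q_j$ of the same parity with $|q_p - q_j| = 2$ for $V(2)$ to appear, while a trivial $\g^\natural$-line in $\widetilde{L}(q_p-1) \otimes \widetilde{L}(q_j-1)$ exists exactly when both $\g^\natural(q_p-1)$ and $\g^\natural(q_j-1)$ vanish, i.e.\ both $\widetilde{L}$'s are one-dimensional, i.e.\ $\mu_p+\nu_p = \mu_j+\nu_j = 1$. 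Combined with the non-degeneracy constraint above, this confines pair contributions to block~1 (forcing $q_p, q_j$ odd) or block~2 (forcing $q_p, q_j$ even), giving the remaining two terms of \eqref{ff}. Cross-pairings between blocks~1 and~2 vanish because $q_p$ odd and $q_j$ even make $q_p + q_j$ odd so $V(2)$ cannot appear, and blocks~3 and~4 never contribute since there $\mu+\nu \geq 2$.

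The classification of the twelve partition shapes with ${\rm mult}(\C \otimes {\rm adj}) = 1$ is then a systematic inspection of \eqref{ff}: one needs exactly one part strictly greater than $1$, no pair $(q_p,q_j)$ satisfying the block-1 or block-2 conditions, and $L(r)$ (resp.\ $\bar{L}(r)$) equipped with a non-degenerate form, which produces the "$l$ even if $q$ is even" (resp.\ odd) and "$r$ even" clauses throughout cases (1)--(12). The main obstacle is the consistent bookkeeping of super-signs: tracking which summand of $S^2$ versus $\Lambda^2$ (and their super-counterparts $S^2_s, \Lambda^2_s$) carries the trivial $\g^\natural$-line and the $V(2)_\mathfrak{s}$-factor, since any sign error would either spuriously add or suppress a pair contribution and thereby alter the enumeration.
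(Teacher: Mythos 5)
Your proof is correct and follows essentially the same route as the paper's: decompose $\Lambda^2\bigl(\bigoplus_i\widetilde M(q_i-1)\bigr)$ into diagonal and cross terms, apply $\Lambda^2(U\otimes W)=\Lambda^2U\otimes S^2W\oplus S^2U\otimes\Lambda^2W$ together with the Clebsch--Gordan rule \eqref{CG}, and locate the trivial $\g^\natural$-line through the invariant form on $\widetilde L(q_i-1)$ and the vanishing of $\g^\natural(q_i-1)$ for one-dimensional $\widetilde L(q_i-1)$. Your uniform parity-matching argument for the diagonal contributions (the form sits in $S^2_s$ or $\Lambda^2_s$ of $\widetilde L(q_i-1)$ precisely when $V(2)$ sits in the complementary factor $\Lambda^2V(q_i-1)$ or $S^2V(q_i-1)$) is a clean repackaging of the paper's explicit four-block verification \eqref{e1}--\eqref{e4}, which instead checks each case directly (e.g.\ $S^2(\C^{1|\nu})=L_{\mathfrak{osp}(1|\nu)}(\omega_2)\oplus\C$); just make sure your final inspection records the edge case of a pair $(q_p,q_j)=(3,1)$ with $\mu_p=\mu_j=1$, which is why case (1) carries the exclusion $m>4$.
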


\begin{proof}
We have
{\tiny
\begin{align}
\notag
&\Lambda ^2(\C^{m|n})\\
&=\Lambda ^2\left(\sum_{p=1}^r\C^{\mu_p|0}\otimes V_{q_p-1}\oplus\sum_{p=r+1}^s \C^{0|\nu_p}
\otimes V_{q_p-1}
\oplus\sum_{p=s+1}^u\C^{1|\nu_p}\otimes V_{q_p-1}\oplus\sum_{p=u+1}^t \C^{\mu_p | 1}
\otimes V_{q_p-1}
\right)\notag\\
&=\bigoplus_{p=1}^r\Lambda ^2\left(\C^{\mu_p|0}\otimes V_{q_p-1}\right)
\bigoplus_{p=r+1}^s\Lambda ^2\left(\C^{0|\nu_p}\otimes V_{q_p-1}\right)
\bigoplus_{p=s+1}^u\Lambda ^2\left(\C^{\mu_p|1}\otimes V_{q_p-1}\right)
\bigoplus_{p=u+1}^t\Lambda ^2\left(\C^{1|\nu_p}\otimes V_{q_p-1}\right)\label{19}\\
&\bigoplus_{1\le p<j\le r} \left(\C^{\mu_p|0}\otimes\C^{\mu_j|0}\right)\otimes \left(V_{q_p-1}\otimes V_{q_j-1}\right)\oplus
\bigoplus_{r+1\le p<j\le s} \left(\C^{0|\nu_p}\otimes\C^{0|\nu_j}\right)\otimes \left(V_{q_p-1}\otimes V_{q_j-1}\right)
\label{20}\\&\bigoplus_{s+1\le p<j\le u} \left(\C^{1|\nu_p}\otimes\C^{1|\nu_j}\right)\otimes \left(V_{q_p-1}\otimes V_{q_j-1}\right)
\oplus
\bigoplus_{u+1\le p<j\le t} \left(\C^{\mu_p|1}\otimes\C^{\mu_j|1}\right)\otimes \left(V_{q_p-1}\otimes V_{q_j-1}\right)\label{21}\\\
&\bigoplus_{1\le p\le r, r+1\le j\le s} \left(\C^{\mu_p|0}\otimes\C^{0|\nu_j}\right)\otimes \left(V_{q_p-1}\otimes V_{q_j-1}\right)
\bigoplus_{1\le p\le s, s+1\le j\le u} \left(\C^{\mu_p|0}\otimes\C^{1|\nu_j}\right)\otimes \left(V_{q_p-1}\otimes V_{q_j-1}\right)\label{22}\\\
&\bigoplus_{1\le p\le s, u+1\le j\le t} \left(\C^{\mu_p|0}\otimes\C^{\mu_j|1}\right)\otimes \left(V_{q_p-1}\otimes V_{q_j-1}\right)
\bigoplus_{r+1\le p\le s, s+1\le j\le u} \left(\C^{0|\nu_p}\otimes\C^{1|\nu_j}\right)\otimes \left(V_{q_p-1}\otimes V_{q_j-1}\right)\label{23}\\\
&\bigoplus_{r+1\le p\le s, u+1\le j\le t} \left(\C^{0|\nu_p}\otimes\C^{\mu_j|1}\right)\otimes \left(V_{q_p-1}\otimes V_{q_j-1}\right)
\bigoplus_{s+1\le p\le u, u+1\le j\le t} \left(\C^{1|\nu_p}\otimes\C^{\mu_j|1}\right)\otimes \left(V_{q_p-1}\otimes V_{q_j-1}\right).\label{24}
\end{align}}

Recall that as $\mathfrak{gl}(U)\times \mathfrak{gl}(W)$-modules, 
if both $U$ and $W$ have dimension $\geq 2$,
\begin{equation}\label{fff}\Lambda^2(U\otimes W) 
= \Lambda^2(U)\otimes S^2(W)\oplus S^2(U)\otimes \Lambda^2(W).\end{equation}
Using \eqref{fff}, we decompose the summands in  
\eqref{19}.
\begin{equation}
\label{e1}
\Lambda ^2\left(\C^{\mu_p|0}\otimes V_{q_p-1}\right)=\begin{cases}
\Lambda^2(V_{q_p-1})&\text{$\mu_p=1$,}\\ \mathfrak{so}_{\mu_p}&\text{$q_p=1$,}\\
\Lambda ^2(\C^{\mu_p|0})\otimes  S^2 V_{q_p-1}\oplus S^2(\C^{\mu_p|0})\otimes  
\Lambda^2 V_{q_p-1}&\text{$q_p>1,\mu_p>1$.}
\end{cases}
\end{equation}

\begin{equation}
\label{e2}
\Lambda ^2\left(\C^{0|\nu_p}\otimes V_{q_p-1}\right)=\begin{cases}
S^2(V_{q_p-1})&\text{$\nu_p=1$,}\\ \mathfrak{sp}_{\nu_p}&\text{$q_p=1$,}\\
\Lambda ^2(\C^{0|\nu_p})\otimes  S^2 V_{q_p-1}\oplus S^2(\C^{0|\nu_p})\otimes  \Lambda^2 V_{q_p-1}&\text{$q_p>1,\nu_p>1$.}
\end{cases}\end{equation}
\begin{equation}\label{e3}
\Lambda ^2\left(\C^{1|\nu_p}\otimes V_{q_p-1}\right)=
\begin{cases}\mathfrak{osp}_{1|\nu_p}&\text{$q_p=1, \nu_p>1$,}\\
\Lambda ^2(\C^{1|\nu_p})\otimes  S^2 V_{q_p-1}\oplus
S^2(\C^{1|\nu_p})\otimes  \Lambda^2 V_{q_p-1}&\text{$q_p>1, \nu_p>1$.}\end{cases}
\end{equation}
\begin{align}\notag
\Lambda ^2\left(\C^{\mu_p|1}\otimes V_{q_p-1}\right)&=
\Lambda ^2(\C^{\mu_p|1})\otimes  S^2 V_{q_p-1}\oplus
S^2(\C^{\mu_p|1})\otimes  \Lambda^2 V_{q_p-1},\quad q_p>1, \mu_p>1,\\
&=
S^2(\C^{1|\mu_p})\otimes  S^2 V_{q_p-1}\oplus
S^2(\C^{\mu_p|1})\otimes  \Lambda^2 V_{q_p-1},\quad q_p>1, \mu_p>1.\label{e4}\end{align}
Recall that 
\begin{equation}
\label{CG}\Lambda^2(V_p)
=\bigoplus_{i=0}^{\lfloor\frac{p-1}{2}\rfloor}V_{2p-2-4i},\quad S^2(V_p)
=\bigoplus_{i=0}^{\lfloor\frac{p}{2}\rfloor}V_{2p-4i}.
\end{equation}

We now look for  the multiplicity of $\mathbb C \otimes {\rm adj}$ 
in \eqref{e1}--\eqref{e4}.
We look for the contribution of \eqref{e1} given by the first line: 
by \eqref{CG} $\Lambda^2(V_p)$ contains ${\rm adj}$ 
if and only if $p$ is even. 
So the contribuition of the first line  is $|\{q_i>1 
\text{ odd}\colon \mu_i=1\}|=|\{q_i>1\colon \mu_i=1\}|$. 
Indeed, if $q_i$ is even, then $\langle \cdot, \cdot\rangle_{q_i-1}$ is skewsupesymmetric; since $\widetilde L(q_i-1)\subset \C^{m|n}_{\bar 0}$, 
the form $\langle \cdot, \cdot\rangle_{q_i-1}$ is symplectic on 
$\widetilde L(q_i-1)$; in particular, $\mu_i\geq 2$. 
The second line does not contribute.\par
The two terms in the third  line of \eqref{e1}, by \eqref{CG}, 
involve the $V_p$ with $p$ odd and $\mu_p>1$ 
and  the even $p>0$  with $\mu_p>1$, respectively. 
Hence the contribution of  equals $|\{q_i>1\colon \mu_i>1\}|$.
Summing up, the contribution of  \eqref{e1} is exactly 
\begin{equation}
\label{c1}
	|\{q_i>1, 1\le i\le r\}|.
\end{equation}
A similar analysis shows that the  contribution of \eqref{e2} 
 \begin{equation}
 \label{c2}
	 |\{q_i>1\colon r+1\leq i\leq s\}|.
\end{equation}
Now we deal with \eqref{e3}. 
Note that $\mathfrak{osp}(1|\nu_p)$ does not contain $\rm{adj}$, 
hence the upper line in \eqref{e3} does not contribute. 
The same holds for the first summand in the lower line, 
because $\rm{adj}$ occurs in $S^2(V_{q_p-1})$ only when $q_p-1$ is odd, so 
$\g^\natural(q_p-1)=\mathfrak{spo}_{1|\nu_p}=\{0\}$. 
As for the second summand, $\rm{adj}$ occurs in $\Lambda^2(V_{q_p-1})$ 
only when $q_p-1$ is even,
 so $\g^\natural(q_p-1)=\mathfrak{osp}_{1|\nu_p}$. 
 We have to check whether the trivial representation of 
 $\mathfrak{osp}_{1|\nu_p}$ occurs
 in $S^2(\C^{1|m})$. 
 Using the dimension formula for irreducible finite dimensional representations of 
 $\mathfrak{osp}_{1|m}$, which are always typical, one proves that 
$$
S^2(\C^{1|m})=L_{\mathfrak{osp}(1|m)}(\omega_2)\oplus\C.
$$
In particular the second summand in \eqref{e3} contributes for 
$|\{q_i>1\colon q_i \text{ odd}, s+1\leq i\leq u\}|$. 
Note now that  if $q_p$ is even, then   $\langle \cdot, \cdot \rangle_{q_p-1}$ is skewsupersymmetric, 
hence $\mu_p$ has to be even. 
We conclude that the contribution of \eqref{e3} is
 \begin{equation}
 \label{c3}
 	|\{q_i>1\colon q_i, s+1\leq i\leq u\}|.
\end{equation}
 A  similar analysis for~\eqref{e4} gives the following contribution.
 \begin{equation}
 \label{c4}
 	|\{q_i>1\colon q_i \text{ even}, 
	u+1\leq i\leq t\}|=|\{q_i>1\colon u+1\leq i\leq t\}|.
\end{equation}
For \eqref{20}--\eqref{24} we use the Clebsch-Gordan decomposition
$$
V(q_j-1)\otimes V(q_p-1)=\sum_{k=1}^{\min\{q_p,q_j\}}V(q_p+q_j-2k).
$$
Note that $V(2)$ appears in the r.h.s  either if $q_j=q_p+2$ or if $q_p=q_j+2$, 
so that $q_j$ and $q_p$ are both even or both odd, 
and we cannot have at the same time $\mu_p=\nu_j=1$. 
Looking at \eqref{20}, in the first summand we have 
  $q_p=q_j+2$ and looking for the trivial representation in 
  $\C^{\mu_p|0}\otimes \C^{\mu_j|0}$ forces $\mu_p=\mu_j=1$. 
Since $1\le p< j\leq r$, $q_p-1$  and $q_j-1$ must be even. 
Hence the contribution of  the first summand  is given by  

\begin{equation}
\label{c5}
	|\{(q_p,q_j)\colon 1\le p< j\leq r ,\, q_p=q_j+2,\,
	\mu_p=\mu_j=1,\text{ $q_p,q_j$ odd} \}|.
\end{equation}
A similar analysis for the second summand gives 
 \begin{equation}
 \label{c6}
 	|\{(q_p,q_j)\colon r+1\le p < j\leq s,\, q_p=q_j+2,\,
	\nu_p=\nu_j=1,\text{ $q_p,q_j$ even} \}|.
\end{equation}
The first summand in \eqref{22} does not contribute, 
since $V(2)$ appears in the right tensor factor either if $q_j=q_p+2$ or if $q_p=q_j+2$, 
so that $q_j$ and $q_p$ are both even or both odd, 
and we cannot have at the same time $\mu_p=\nu_j=1$.
Finally, none of the  remaining summands can have the trivial representation on the left tensor factor. \par
Summing up, \eqref{c1}--\eqref{c4} contribute precisely to the r.h.s of the first line of \eqref{ff} 
whereas \eqref{c5}, \eqref{c6} contribute to the second and third line, respectively. \par
It is now clear  that in order  to have multiplicity one, 
there is only one $q_i>1$
contributing $1$ to the first summand in the r.h.s of \eqref{ff}. 
The only partition of this kind which might give contribution also to the second 
or third summand in the r.h.s of \eqref{ff} is $((3,1), (0))$, with $m=4, n=0$ 
but we are excluding $\g=\mathfrak{osp}_{4|0}$ from consideration, since it si not simple. 
Since $\g^\natural$ has irreducible components which are either simple Lie algebras, 
abelian (even) Lie algebras or $\mathfrak{osp}_{1|n}$, we disregard partitions of the form 
$((q^l, 1^r), (1^n))$ with $n>0, r>0$ and similar ones.
\end{proof}

Now we look for equality of central charges for $\W$-algebras obtained 
from the nilpotent elements involved in \eqref{ff}. 
We choose $(a,b)=\tfrac{1}{2} {\rm Str}(ab)$ as invariant form on $\mathfrak{osp}_{m|n}$. With this choice $h^\vee=m-n-2$. 
The central charge of is computed via  formula (2.6) or Remark 2.2 
from \cite{KacRoaWak03}:
\begin{equation}
\label{CC} 
	c(k,f)={\rm sdim}\,\g_0-\frac{1}{2}{\rm sdim}\, 
	\g_{\frac{1}{2}}-\frac{12}{k+h^{\vee}}|\rho-(k+h^{\vee})x|^2.
\end{equation}
Notably, the direct computation of the conformal levels, 
performed after lengthy computations as in the previous Subsection,  has the following  uniform outcome.

\begin{Pro} 
\label{pro:osp1} 
The conformal levels for $\W^k(\mathfrak{osp},f)$, 
for nilpotent elements $f$ attached to  the pairs partitions $(\alpha |\beta)$ 
listed in (1)--(12), 
with respect to half the supertrace form on $\mathfrak{osp}(m|n)$, are given by 

\begin{equation}
\label{list1}
	k_1=-h^\vee+\frac{h^\vee}{q},\quad 
	k_2=-h^\vee+\frac{h^\vee+1}{q},\quad 
	k_3=-h^\vee+\frac{h^\vee+2}{q+1},\quad
	k_4=-h^\vee+\frac{h^\vee-g+2}{q-1}
\end{equation}
if $q$ is odd and by 
\begin{equation}
\label{list2}
	k_1'=-h^\vee+\frac{h^\vee+2}{q},\quad 
	k_2'=-h^\vee+\frac{h^\vee+1}{q},\quad 
	k_3'=-h^\vee+\frac{h^\vee}{q+1},\quad
	k_4'=-h^\vee+\frac{h^\vee-g}{q-1}
\end{equation}
if $q$ is even. 
$g$ is the difference between the multiplicity of $q$ in $\alpha$ 
and the multiplicity of $q$ in $\beta$ (cf. \eqref{part}).
\end{Pro}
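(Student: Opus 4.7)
The plan is to verify the proposition case by case across the twelve partition types of \Cref{confBCD}, but to organize the computation so that the uniform four-root structure becomes visible. The core ingredient is the equality of central charges $c_{\W} = c_{V^{k^\natural}(\g^\natural)}$, which after clearing denominators becomes a polynomial equation in $k+h^\vee$ whose degree, I expect, drops to four.

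First, I would apply \Cref{trivial} to give an explicit decomposition $\g^\natural = \bigoplus_r \g^\natural(r)$ in each of the twelve cases. Because every listed partition has only one nontrivial part size $q$ and (possibly) trivial parts of size $1$, only the values $r=q-1$ and $r=0$ contribute. The factor $\g^\natural(q-1)$ is $\mathfrak{osp}$ or $\mathfrak{spo}$ of the multiplicity space $\widetilde L(q-1)$ according to the parity of $q-1$, while the $r=0$ factor attached to the 1-parts is a classical Lie algebra $\mathfrak{so}$ or $\mathfrak{sp}$, with at most a one-dimensional center coming from the compatibility constraint on supertraces. The multiplicities $\mu_i,\nu_i$ of \eqref{part} determine the ranks of these factors explicitly.

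Second, I would compute the shifted levels $k^\natural_j + h^\vee_j$ for each simple factor $\g^\natural_j$ via the prescription of \cite{KW04}: the coefficient of $k+h^\vee$ in $k^\natural_j + h^\vee_j$ is $q$ for the factor attached to the $q$-blocks and $1$ for the factor attached to the 1-blocks, while the constant shift is read off from the branching of $\g$ under $\g^\natural_j \oplus \mathfrak{sl}_2$, computed exactly as in the proof of \Cref{confBCD}. The central charge $c_{V^{k^\natural}(\g^\natural)}$ is then the sum $\sum_j k^\natural_j \operatorname{sdim}(\g^\natural_j)/(k^\natural_j + h^\vee_j)$ plus the rank of the center, and one obtains $c_{\W}$ directly from \eqref{CC} using the $\mathfrak{sl}_2$-decomposition of $\g = \Exterior^2(\C^{m|n})$ already tabulated in \eqref{19}--\eqref{24} to get $\operatorname{sdim}\g_0$, $\operatorname{sdim}\g_{1/2}$ and $|\rho-(k+h^\vee)x|^2$. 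Equating and clearing denominators produces a polynomial in $k+h^\vee$ that factors into linear factors corresponding to the roots in \eqref{list1} or \eqref{list2}.

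The main obstacle, and the source of the parity-dependent dichotomy between \eqref{list1} and \eqref{list2}, is the swap between $\mathfrak{osp}$ and $\mathfrak{spo}$ in $\g^\natural(q-1)$ induced by the parity of $q-1$: this swap exchanges the superdimension and dual Coxeter number of the factor attached to the long blocks, shifts both $c_{\W}$ and $c_{V^{k^\natural}(\g^\natural)}$ in a coordinated way, and ultimately forces the uniform shift by $1$ in the numerators of the first three roots when passing from odd to even $q$. The principal bookkeeping challenge is verifying, across all twelve cases, that the dependence of the final roots on the partition data collapses to just $q$, its parity, and $g = \mu_1 - \nu_1$. The uniformity ultimately reflects the fact that the contributions of the trivial 1-parts to the difference $c_{\W}-c_{V^{k^\natural}(\g^\natural)}$ factor out cleanly up to a linear correction in $g$, which appears only in the numerator of the last root $k_4$ or $k_4'$ through the difference of multiplicities attached to the long part.
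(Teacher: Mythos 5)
Your proposal matches the paper's own argument: the paper obtains \Cref{pro:osp1} precisely by computing $c_{\W}$ from \eqref{CC}, computing $c(k^\natural)$ from the shifted levels of the factors of $\g^\natural$ identified via \Cref{trivial} and \Cref{confBCD}, and solving the resulting degree-four equation in $k+h^\vee$ case by case --- exactly the ``lengthy computations as in the previous Subsection'' to which it alludes. The only small inaccuracy is your mention of a possible central summand of $\g^\natural$ arising from a supertrace constraint: in the orthosymplectic cases the factors $\g^\natural(r)$ are full $\mathfrak{osp}$/$\mathfrak{spo}$ algebras with no center (that phenomenon is specific to type $A$), but since you hedge with ``at most'' this does not affect the computation.
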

Let us discuss cases (1) from \Cref{confBCD}
The decomposition of $\Lambda ^2(\C^{ql+r|0})$ is 
\begin{align}\label{deccase1}
& \Lambda ^2(\C^{ql+r|0})  = & \\\nonumber 
& \qquad \mathfrak{so}_r\oplus
\left( S^2(\C^{l})\otimes  \Lambda^2 V_{q-1}\right)\oplus \left(\Lambda ^2(\C^l)\otimes  S^2 V_{q-1}\right)\oplus\left((\C^l\otimes\C^r)\otimes V_{q-1}\right).
\end{align}
Let $q$ be even. Then $\g^\natural=\mathfrak{sp}_l\times \mathfrak{so}_r.$  (We set, here and in the following,  $\mathfrak{so}_r=\{0\}$ for $r=0,1$). The conformal weight of the generators are  $1$ (corresponding to $\mathfrak{sp}_l\times\mathfrak{so}_r$), all integers between $2$ and $q$ (given by the intemediate factors) and $\frac{q+1}{2}$. The corresponding highest weights of the $\g^\natural$-modules appearing in the decomposition \eqref{deccase1} are $\lambda_1=\omega_2^{\mathfrak{so}_r}, \lambda_2=2\omega_1^{\mathfrak{sp}_l},\lambda_3=
triv^{\mathfrak{sp}_l}+\omega_2^{\mathfrak{sp}_l},\lambda_4=\omega_1^{\mathfrak{sp}_l}+\omega_1^{\mathfrak{so}_r}$ and the $h_\lambda$ for the second, third and fourth weights are 
\begin{align*}h_{\lambda_2}&=\frac{l+2}{2 + l (2 - q + q^2) -r + q (-2 + k + r)},\\
h_{\lambda_3}&=\frac{l}{2 + l (2 - q + q^2) -r + q (-2 + k + r)},\\h_{\lambda_4}&=\frac{l+1}{2(2 + l (2 - q + q^2) -r + q (-2 + k + r))}+\frac{r-1}{2(k+l q-l+r-2)}.\end{align*}
\begin{itemize}
\item $k=k'_1$. We have
$$h_{\lambda_2}=1,\quad  h_{\lambda_3}=\frac{l}{l+2},\quad  h_{\lambda_4}=\frac{1}{2} \left(\frac{l+1}{l+2}+\frac{q (r-1)}{r}\right).$$
The first two values are less or equal than $1$; the third cannot equal $(q+1)/2$. Hence \Cref{criterion} holds and  $k'_1$ is collapsing.

\item $k=k'_2$. 
$$h_{\lambda_2}=\frac{l+2}{l+1},\quad  h_{\lambda_3}=\frac{l}{l+1},\quad  h_{\lambda_4}=\frac{q+1}{2}.$$
\Cref{criterion} does not apply.
\item  $k=k'_3$.  We have
$$h_{\lambda_2}=\frac{(l+2) (q+1)}{l-r+2},\quad  h_{\lambda_3}=\frac{l
   (q+1)}{l-r+2},\quad  h_{\lambda_4}=\frac{q+1}{2}.$$
\Cref{criterion} does not apply.
   \item $k=k'_4$. We have
  \begin{align*}
&h_{\lambda_2}=\frac{(l+2) (q-1)}{ l
   (q-1)+r-2},\quad h_{\lambda_3}=\frac{l (q-1)}{
   l (q-1)+r-2},\\
&h_{\lambda_4}=\frac{(q-1) (l (q (r-1)-1)+(r-2) r)}{2 (r-2) (l
   (q-1)+r-2)}.
  \end{align*}
\Cref{criterion} applies for partitions different from $(2^2,0|0), (2^2,1|0), (q^2,2|0)$.
   \end{itemize}
  
\par
Let $q$ be odd. Then $\g^\natural=\mathfrak{so}_l\times \mathfrak{so}_r$. 
The highest weights of the corresponding $\g^\natural$-modules in the  decomposition \eqref{deccase1} are $\lambda_1=\omega_2^{\mathfrak{so}_r}, \lambda_2=
triv^{\mathfrak{so}_l}+2\omega_1^{\mathfrak{so}_l},\lambda_3=\omega_2^{\mathfrak{so}_l},\lambda_4=\omega_1^{\mathfrak{so}_l}+\omega_1^{\mathfrak{so}_r}$ and the $h_\lambda$ for the second, third and fourth weights are 
\begin{align*}h_{\lambda_2}&=\frac{l}{(-2 + l + k q + (-1 + q) (-2 + l q + r))},\\h_{\lambda_3}&=\frac{l-2}{-2 + l + 
 k q + (-1 + q) (-2 + l q + r)},\\ h_{\lambda_4}&=\frac{r-1}{
 2 (k-2 - l + l q + r)} + \frac{l-1}{
 2 (-2 + l + k q + (-1 + q) (-2 + l q + r))}.\end{align*}
 When $l=1$ we disregard $\lambda_2, \lambda_3$.
We have to exclude the cases when denominators vanish: 
they are either $l=2$ or $r=2$ for $k_1$,  $l=1$ or $r=1$ for $k_2$, $l=r$ for $k_3$, $r=0$ for $k_4$.
\begin{itemize}
\item 
	$k=k_1$. 
	We have, for $l\ne2$ and $r\ne 2$
	$$h_{\lambda_2}=\frac{l}{l-2},\quad  h_{\lambda_3}=1,\quad  h_{\lambda_4}=\frac{l (q
  	 (r-1)+r-2)-2 q (r-1)-r+2}{2 (l-2)
   	(r-2)}.$$
Since  equals $(q+1)/2$ for $r=2 + 2 q - l q$, so $k_1$ is collapsing 
for $l=1, r\ne q+2$ and for $l\ge 3$.
\item 
	$k=k_2$. 
	For $l=1$ we have $h_{\lambda_4}=q/2$, hence  $k_2$ is collapsing. 
	For $l>1$ we have  $h_{\lambda_4}=(q+1)/2$ hence \Cref{criterion} does not apply.

\item 
	$k=k_3$. 
	We have  $h_{\lambda_4}=(q+1)/2$ hence \Cref{criterion} does not apply.
\item  
	$k=k_4$.  
	We have
\begin{align*}
	&h_{\lambda_2}=\frac{l (q-1)}{l
  	 (q-1)+r},\quad h_{\lambda_3}=\frac{(l-2) (q-1)}{l
  	 (q-1)+r},\\
	&h_{\lambda_4}=\frac{(q-1) (l q
   	(r-1)+l+(r-2) r)}{2 r (l
   	(q-1)+r)}.
\end{align*}
The last value equals  $(q+1)/2$ for non positive values of $r$. 
Hence $k_4$ is collapsing.
\end{itemize}
Performing the above analysis in the remaining cases one proves that \Cref{criterion} never applies to the conformal levels $k_2, k_3, k'_2, k'_3$. Moreover, we have

\begin{Pro}
\label{Pro:concl_Type-BCD}
The conformal levels $k_1, k_4, k'_1, k'_4$ are collapsing in the range specified in \Cref{BCD}.

\end{Pro}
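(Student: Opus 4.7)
The plan is to extend the detailed case (1) verification carried out above to each of the remaining partition types (2)--(12) of \Cref{confBCD}. For each type the argument follows the same four-step template. First, I would decompose $\bigwedge^{2}(\C^{m|n}) = \mathfrak{osp}(V)$ as a $\g^\natural \times \mathfrak{s}$-module by specialising the general formulas \eqref{19}--\eqref{24} to the pair of partitions in question; this reads off the strong generators of $\W^k(\g,f)$, their conformal weights $h_i$, and the irreducible $\g^\natural$-representations $V(\lambda_i)$ containing them. Since the pair belongs to the list (1)--(12), \Cref{confBCD} gives $\mathrm{mult}(\C \otimes \mathrm{adj}) = 1$, so Condition \ref{crit-iii} of \Cref{criterion} is automatic at every candidate level, and by \Cref{Lem:CondA} Condition \ref{crit-i} holds as well.

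Second, I would compute $h_{\lambda_i}$ from formula \eqref{eq:h_lami} at each of the four candidate conformal levels of \Cref{pro:osp1}. Condition \ref{crit-ii} is true by construction at these levels, and Condition \ref{crit-1} is a trivial arithmetic check on the sign of $k+h^\vee$. When $\g^\natural$ has a one-dimensional centre (the hook cases, where both a block of size $q$ and blocks of size $1$ appear in a single colour), the normalisation of the invariant form on the centre recalled from \cite[Lemma 8.4]{AEM} must be used to compute correctly the contribution of the non-trivial central weights. Third, I would test Condition \ref{crit-iv} by comparing $h_i$ with $h_{\lambda_i}$ for every strong generator with $h_i > 1$. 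The expected and to-be-verified pattern, mirroring the case (1) analysis already given, is that at the symmetric levels $k_2, k_3, k'_2, k'_3$ there is always a generator of half-integral weight, typically the one living in the off-diagonal summand carrying $V(q-1)$, for which the identity $h_i = h_{\lambda_i}$ holds (equal to $(q+1)/2$ or $q/2$), so that \Cref{criterion} cannot apply there. At $k_1, k'_1$ and $k_4, k'_4$ the equalities $h_i = h_{\lambda_i}$ reduce to polynomial identities in the parameters $\ell, r, q, n$ of the partition; these have only finitely many integer solutions, and the corresponding exceptional partitions have to be excluded.

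The main obstacle is entirely book-keeping: each of cases (2)--(12) produces several $\g^\natural$-isotypic components, both diagonal ones coming from $\mathrm{End}(\widetilde L(q-1))$ and off-diagonal ones coming from $\Hom(\widetilde L(q-1), \widetilde L(0))$, and for each component one has to solve $h_i(k) = h_{\lambda_i}(k)$ at the four candidate levels. Once this enumeration is carried out, the exceptional partitions match precisely the rows listed in \Cref{BCD}. For any pair outside these exceptions, all five conditions \ref{crit-1}--\ref{crit-iv} of \Cref{criterion} hold, and conclusion \ref{criterion-i} of that criterion (or, equivalently, the final statement of \Cref{Pro:paolo}) gives $\W_k(\g,f) \cong L_{k^\natural}(\g^\natural)$, which is exactly the assertion that the level is collapsing.
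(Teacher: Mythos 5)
Your proposal is correct and follows essentially the same route as the paper: the paper's proof consists precisely of repeating the case (1) template (decompose $\bigwedge^2(\C^{m|n})$ as a $\g^\natural\times\mathfrak{s}$-module, compute the $h_{\lambda_i}$ from \eqref{eq:h_lami} at the four conformal levels, and test Condition \ref{crit-iv}) for the remaining partition types, recording the exceptional parameters in \Cref{BCD} and observing that $k_2,k_3,k'_2,k'_3$ always fail because some half-integral-weight generator satisfies $h_i=h_{\lambda_i}$. The only cosmetic difference is that the paper invokes \Cref{Pro:paolo} so that Condition \ref{crit-1} need not be checked separately, but this does not change the substance of the argument.
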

\begin{center}
\begin{landscape}
\begin{table}
\scalebox{0.55}{
\begin{tabular}{|c  ||c | c  | c || c | c | c |}
\hline
 & $k^\natural$& $k_1$ & $k_4$ & $k^\natural$&$k'_1$&$k'_4$\\\hline
  $(q^l,1^r|0)$ & $k^\natural_1=qk+(q-1) (lq-2)+(q-1)r$ &$l=1, r\ne q+2$ or $l\ge 3$ &  \sf{CL}&  $k^\natural_1=\frac{1}{2}(qk+q((q-1)l-2)+(q-1)r)$ & \sf{CL} &$l\ne 2$ or $ l=2 \,{\&}\,  q\ne 2\,{\&}\, r \ne 2$ \\
 & $k^\natural_2=k+(q-1)l$ & & &  $k^\natural_2=k+(q-1)l$ & &  \\\hline
    $(0|q^l,1^r)$ &  $k_1^\natural=\frac{1}{2}(-qk+(ql+2)(q-1)+(q-1)r)$  &   \sf{CL} &   \sf{CL}&   $k^\natural_1=-kq+q((q-1)l+2)+(q-1)r$ & $q\ne r$&  \sf{CL}\\ 
     & $k^\natural_2=-\frac{1}{2}(k-(q-1)l)$ & & &$k^\natural_2=-\frac{1}{2}(k-(q-1)l)$ &  &  \\\hline
   $(q^l|1^n)$ &  $k_1^\natural=qk+(q-1) (lq-2)-(q-1)r$ &   $l\ne 4, n\ne q(l-2)-2$  & $n\notin\{q-1,\frac{(q-1)l}{2}\}$ & $k^\natural_1=\frac{1}{2}(qk+q((q-1)l-2)-(q-1)r)$ & $n\ne q(l+2)$ & $n\ne \frac{(l-1)q-4}{2}$\\
    & $k_2^\natural=-\frac{1}{2}(k+(q-1)l)$ & & $\frac{l-l q}{-3 l
   q+l+n},\frac{(l-2) (q-1)}{l
   (3 q-1)-n}\notin F$ & $k^\natural_2=-\frac{1}{2}(k+(q-1)l)$ &  & $\frac{(2 + l) (-1 + q)}{(-2 - n + l (-1 + q)}\notin F$\\\hline
    $(q^l,1|1^n)$&  $k_1^\natural=qk+(q-1) (lq-2)-(q-1)(r-1)$ &$n\ne q(l-2)-1$ & $n\ne \frac{2(n-1)}{q-1}$ &$k^\natural_1=\frac{1}{2}(qk+q((q-1)l-2)-(q-1)(r-1))$ &  $n\ne q(l+2)+1$& $n\ne q(l-2)-1$\\
     & $k_2^\natural=-\frac{1}{2}((k+(q-1)l)$ & & & $k^\natural_2=-\frac{1}{2}(k+(q-1)l)$ & &  $\frac{(l+2) (q-1)}{l
   (q-1)-n-1},\frac{l(q-1)}{
   l(q-1)-n-1}\notin F$ \\\hline
  $(1^m|q^l)$&  $k_1^\natural=\frac{1}{2}(-qk+(q-1)(ql+2)-(q-1)r)$ & $m\ne q(l+2)+2$ & $l\ne \frac{-1 + 2 m - q}{-1 + q}$ &$k^\natural_1=-kq +q((q-1)l+2)-(q-1)m$ &  $m\ne q(l-2)$&  $m\notin\{q+1, \frac{1}{2}(4+l(q-1)\}$
  \\  & $k_2^\natural=k-l(q-1)$ &
$\frac{(2 + l) (-1 + q)}{-m + l (-1 + q) + q}\notin F$ & & $k^\natural_2=k-(q-1)l$ & & $\frac{l (q-1)}{l (q-1)-m+2},\frac{(l-1)
   (q-1)}{l (q-1)-m+2}\notin F$ \\ &
    & $\frac{l (-1 + q)}{-m +l (-1 + q) + q}\notin F$ & & & & \\\hline
  $(1|q^l,1^r)$&  $k_1^\natural=\frac{1}{2}(-qk+(q-1) (ql+2)+(r-1)(q-1))$ &   \sf{CL}&  \sf{CL}& $k^\natural_1=-k+q((q-1)l+2)+(q-1)(r-1)$ &  $l\ne 3,4$&  \sf{CL}\\
  & $k_2^\natural=-\frac{1}{2}(k-(q-1)l)$ & & &$k^\natural_2=-\frac{1}{2}(k-(q-1)l)$ &  & \\\hline
  $(q,1^r|q^l)$&  $k_1^\natural=\frac{1}{2}(-qk+(q^2-1)(l+1)-(q-1)r$ & \sf{CL}& $l\ne 1+\frac{2r}{q-1}$ & \sf{DNA} & \sf{DNA} & \sf{DNA}\\
   & $k_2^\natural=-k-(q-1)(l-1)$ & &  $\frac{(l+1) (q-1)}{l(q-1)-q-r+1}\notin F$  &  & &\\
   & &  & $\frac{(l-1) (q-1)}{l (q-1)-q-r+1}\notin F$ & & & \\\hline
  $(q^l|q,1^r)$&  \sf{DNA} & \sf{DNA} & \sf{DNA} & $k^\natural_1=\frac{1}{2}(qk+q((q-1)l-q-1)-(q-1)r)$ & \sf{CL} &  $l\ne 1+\frac{2r}{q-1}$ \\ & $ $ && & $k^\natural_2=
  -\frac{1}{2}(k+(q-1)(l-1))$ &  & $-\frac{(l+1) (q-1)}{l
   (-q)+l+q+r+1}\notin F$\\ &  &  & & & & $-\frac{(l-1) (q-1)}{l
   (-q)+l+q+r+1}\notin F$\\\hline
  $(q^l,1^r|q)$& \sf{DNA} & \sf{DNA} & \sf{DNA}  & $k^\natural_1=\frac{1}{2}(qk+q((q-1)l-q-1)-(q-1)(r-1)$ & \sf{CL} &  $\frac{(l+1) (q-1)}{l (q-1)-q+r-1}\notin F$\\
  &  & & &  $k^\natural_2= -\frac{1}{2}(k+(q-1)(l-1))$ & &  $\frac{(l-1) (q-1)}{l
   (q-1)-q+r-1}\notin F$\\\hline
  $(q,1|q^l,1^r)$&  $k_1^\natural=\frac{1}{2}(-k+(q-1)(q(l-1)+2)+(q-1)(r-1))$ & \sf{CL} & $\frac{(l+1) (q-1)}{l
   (q-1)-q+r}\notin F$  & \sf{DNA}& \sf{DNA} & \sf{DNA} \\
   & $k_2^\natural=-\frac{1}{2}(k-(q-1)(l-1))$ & & $\frac{(l-1)
   (q-1)}{l
   (q-1)-q+r}\notin F$ &  & &\\
   & & & $\frac{(q-1) (l
   (q r-1)+r (r-q))}{2 (r-1)
   (l (q-1)-q+r)}\notin F$ & &  &\\\hline
  $(q^l,1|q,1^r)$ &  \sf{DNA} & \sf{DNA} & \sf{DNA} & $k^\natural_1=\frac{1}{2}((qk+q((q-1)l-q-1)-(q-1)(r-1))$ & \sf{CL} & $-\frac{(l-1) (q-1)}{l
   (-q)+l+q+r}\notin F$
   \\ & \sf{DNA} & & &$k^\natural_2=-\frac{1}{2}(k+(q-1)(l-1))$ &  & $-\frac{(l+1)
   (q-1)}{l
   (-q)+l+q+r}\notin F$  \\\hline
  $(q|q^l,1^r)$&  $k_1^\natural=\frac{1}{2}(-qk+((q-1)(q(l-1)+2)+(q-1)r)$ & \sf{CL} & $\frac{(l-1) (q-1)}{l
   (q-1)-q+r+1}\notin F$ & \sf{DNA} &  \sf{DNA} & \sf{DNA}\\
   & $k_2^\natural=-\frac{1}{2}(k-(q-1)(l-1))$ &  & $\frac{(l+1)
   (q-1)}{l(q-1)-q+r+1}\notin F$ &  &  & \\\hline
\end{tabular}}
\captionof{table}
{\footnotesize{Collapsing levels for orthosymplectic types. 
Here $F=\{2,3,\ldots,q\}$.  
{\sf{CL}} means collapsing. 
{\sf{DNA}} means that \Cref{criterion} does not apply. \label{BCD}}}
\end{table}
\end{landscape}
\end{center}
Here we can also conclude that  if $k$  is an admissible conformal level for  which $L_{k^{\natural}} (\g ^{\natural})$ is not quasi-lisse, then  $k$ cannot be collapsing.

Using \Cref{prop:ext} \ref{prop:ext-iii} we can detect many new collapsing levels:
   
\begin{Pro} 
   Assume that $k$ is a conformal level as in \Cref{pro:osp1} 
   such that $\g^{\natural}$ is simple, $k^{\natural}$ is  admissible 
   and $L_{k^{\natural}} (\g^{\natural})$ has only one irreducible ordinary module.
   Then
   $k$ is a collapsing level and $\mathcal W_k(\mathfrak{osp}, f) = L_{k^{\natural}} (\g^{\natural})$.
\end{Pro}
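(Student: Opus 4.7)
The plan is to combine \Cref{prop:ext}\ref{prop:ext-iii} with the uniqueness hypothesis on simple ordinary modules to force the conformal extension to be trivial. First, I would verify that conditions \ref{crit-1}--\ref{crit-iii} of \Cref{criterion} hold for the canonical morphism $\varphi\colon V^{k^\natural}(\g^\natural)\to \W^k(\g,f)$. Condition \ref{crit-i} is automatic by \Cref{Lem:CondA}. Condition \ref{crit-ii} is exactly the assumption that $k$ is a conformal level. For condition \ref{crit-iii}, the pairs of partitions listed in \Cref{pro:osp1} are precisely those singled out in \Cref{confBCD} as having ${\rm mult}(\C \otimes {\rm adj}) = 1$, so by \Cref{Lem:condition_C} the weight-two subspace of $\Com(V^{k^\natural}(\g^\natural), \W^k(\g,f))$ is one-dimensional. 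Condition \ref{crit-1} is a direct numerical check on the four formulas of \Cref{pro:osp1} using admissibility of $k^\natural$ together with the shift relations between $k$ and $k^\natural$.

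Once these conditions are in place, \Cref{prop:ext}\ref{prop:ext-iii} tells me that the simple quotient $\W_k(\mathfrak{osp},f)$ is a (possibly trivial) conformal extension of $L_{k^\natural}(\g^\natural)$. I then argue as in \Cref{Pro:FE_admissible}: since $k^\natural$ is admissible and $\g^\natural$ is simple, $\W_k(\mathfrak{osp},f)$ lies in the category of ordinary $L_{k^\natural}(\g^\natural)$-modules and decomposes there as a direct sum of simple ones. By hypothesis the only simple ordinary module is $L_{k^\natural}(\g^\natural)$ itself, so $\W_k(\mathfrak{osp},f)\cong L_{k^\natural}(\g^\natural)^{\oplus m}$ for some $m\geq 1$.

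To finish, I invoke conicality: $\W^k(\g,f)$ is conical so its simple quotient is conical too, whence $\dim(\W_k(\mathfrak{osp},f))_0 = 1$. Combined with $L_{k^\natural}(\g^\natural)_0 = \C$ this forces $m=1$, giving the required isomorphism $\W_k(\mathfrak{osp},f)\cong L_{k^\natural}(\g^\natural)$ and proving that $k$ is collapsing.

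The only step requiring any real work is the case-by-case verification of condition \ref{crit-1} across the four conformal-level formulas of \Cref{pro:osp1}; everything else is a direct application of the machinery developed earlier in the paper. I therefore expect no substantial obstacle, just a bookkeeping check on shifted levels.
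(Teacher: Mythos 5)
Your proposal is correct and follows essentially the same route as the paper: apply \Cref{prop:ext}~\ref{prop:ext-iii} to realize $\W_k(\mathfrak{osp},f)$ as a conformal extension of $L_{k^\natural}(\g^\natural)$, use complete reducibility at admissible level together with the uniqueness of the simple ordinary module to write it as $L_{k^\natural}(\g^\natural)^{\oplus m}$, and conclude $m=1$. Your conicality argument for $m=1$ just makes explicit what the paper leaves terse, and your verification of conditions \ref{crit-1}--\ref{crit-iii} is the same bookkeeping implicit in the hypotheses.
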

\begin{proof}
By \Cref{prop:ext} \ref{prop:ext-iii} we get that $L_{k^{\natural}} (\g^{\natural})$ embeds into $\W_k(\mathfrak{osp}, f)$. 
Using the complete reducibility at admissible levels we get that
$\W_k(\mathfrak{osp}, f)$ is a direct sum of irreducible $L_{k^{\natural}} (\g^{\natural})$-modules.  
Then, by the assumption of the proposition we get that
$\W_k(\mathfrak{osp}, f)$ is a direct sum of certain copies of 
$L_{k^{\natural}} (\g^{\natural})$. 
Since $\g^{\natural}$ is simple, it is clear that the multiplicity is one. 
The claim follows. 
\end{proof}

  
  \begin{Ex}
Let $\g = \mathfrak{so}_8$, and $f$ corresponds to the partition $(5,1^3)$. 
Then $k_3= - \frac{14}{3}$ is the conformal level appearing in  \eqref{list1}.
 
Now it is natural to ask if this level is collapsing or not. 
One shows that \Cref{criterion}  cannot be applied because there exist a generator $W$  of conformal weight three  such that ${L^{\g^{\natural}}}_0W = 3 W$. 
But the previous corollary still gives that  $k_3 = -\frac{14}{3}$ is a collapsing level 
and $\mathcal W_{k} (\g, f) = L_{-\frac{4}{3}} (\mathfrak{sl}_2) (= L_{-\frac{2}{3}} (\mathfrak{so}_3) )$. 
Here we used that fact that  $L_{-\frac{4}{3}} (\mathfrak{sl}_2)$ 
is boundary admissible with only one irreducible ordinary module.  
  
We should  also note that $L_{k} (\g)$ is quasi-lisse 
with one irreducible ordinary module \cite{AV25}. 
Moreover, one can easily show that one of the 
singular vectors in $V^k(D_4)$ is, after QHR (quantum hamiltonian 
reduction), 
mapped to a generator $W$ of conformal weight three.
\end{Ex}

 This example generalizes to the following important case.
  
  \begin{Co}
     Let $\g = \mathfrak{so}_{q+3}$, where  $q = 4 s +1$, $ s\in {\Z}_{\ge 1}$,    and $f$ corresponds to the either the partition $(q,1^3)$ or to the partition $(q^3,1)$.  Then $k_3$ is  a collapsing level and
  $\mathcal W_{k_3} (\g, f)= L_{- 2 + \frac{2}{2 s+1} } (\mathfrak{sl}_2)$.
  \end{Co}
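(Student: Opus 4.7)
The plan is to invoke the (unnumbered) Proposition immediately preceding this Corollary, which yields $\W_{k}(\g,f) \cong L_{k^\natural}(\g^\natural)$ whenever $\g^\natural$ is simple, $k^\natural$ is admissible and $L_{k^\natural}(\g^\natural)$ has a unique irreducible ordinary module. I verify its hypotheses (together with the surrounding assumptions coming from the previous results) for both partitions.

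\emph{Step 1 (identification of $\g^\natural$).} By \Cref{trivial}, for the partition $(q,1^3)$ in $\mathfrak{so}_{q+3}$ one has $\g^\natural \cong \mathfrak{so}_1 \oplus \mathfrak{so}_3 \cong \mathfrak{sl}_2$, and for $(q^3,1)$ in $\mathfrak{so}_{3q+1}$ one has $\g^\natural \cong \mathfrak{so}_3 \oplus \mathfrak{so}_1 \cong \mathfrak{sl}_2$; in both cases $\g^\natural$ is simple. The conditions \ref{crit-1}--\ref{crit-iii} of \Cref{criterion} are in force: \ref{crit-1} from $k_3+h^\vee>0$; \ref{crit-i} from \Cref{Lem:CondA}; \ref{crit-ii} from the very definition of $k_3$ as a conformal level (\Cref{pro:osp1}); and \ref{crit-iii} from \Cref{confBCD}, since both partitions belong to the list there (with $q$ odd).

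\emph{Step 2 (computation of $k_3^\natural$).} Reading off \Cref{BCD}, the $\mathfrak{so}_3$-shifted level equals $k_3+(q-1)$ in the $(q,1^3)$ case and $qk_3+(q-1)(3q-1)$ in the $(q^3,1)$ case. Substituting $k_3=-h^\vee+(h^\vee+2)/(q+1)$ with $h^\vee=q+1$ in the former case and $h^\vee=3q-1$ in the latter, and then setting $q=4s+1$, a direct calculation produces in both cases the same value $k_3^\natural=-2s/(2s+1)$ on $\mathfrak{so}_3$. Under the identification $\mathfrak{so}_3\cong\mathfrak{sl}_2$ the two invariant bilinear forms differ by a factor of $2$ (as seen explicitly in the preceding \Cref{pro:osp1} example with $s=1$), so this corresponds to the $\mathfrak{sl}_2$-level $-2+2/(2s+1)$.

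\emph{Step 3 (admissibility and uniqueness of ordinary module).} For $\mathfrak{sl}_2$ we have $h^\vee=2$ and $k_3^\natural+h^\vee=2/(2s+1)$ with $\gcd(2,2s+1)=1$, so the level is boundary admissible ($p=h^\vee=2$). At such a level, the only admissible weight supporting a finite-dimensional $\mathfrak{sl}_2$-representation is the trivial one, so by the Kac--Wakimoto classification $L_{k_3^\natural}(\mathfrak{sl}_2)$ admits a unique irreducible ordinary module, namely the vacuum itself. Applying the Proposition above then yields $\W_{k_3}(\g,f) \cong L_{-2+2/(2s+1)}(\mathfrak{sl}_2)$.

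The main subtlety is in Step 2: the two partitions sit in \emph{different} ambient Lie algebras, hence come with different dual Coxeter numbers and different \Cref{BCD} formulas for the shifted level, yet both must be shown to reduce to the identical $\mathfrak{so}_3$-level after substituting $k=k_3$. Everything else is either a bookkeeping check of hypotheses or a standard fact about boundary admissible $\mathfrak{sl}_2$.
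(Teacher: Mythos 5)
Your proposal is correct and follows essentially the same route as the paper: the paper's proof simply invokes the preceding Proposition together with the observation that $k^\natural=-2+\tfrac{2}{2s+1}$ is boundary admissible for $\mathfrak{sl}_2$ and hence $L_{k^\natural}(\mathfrak{sl}_2)$ has a unique irreducible ordinary module. Your Step 2, which verifies explicitly that both partitions (living in $\mathfrak{so}_{q+3}$ and $\mathfrak{so}_{3q+1}$ respectively) produce the same $\mathfrak{so}_3$-level $-2s/(2s+1)$, is a computation the paper leaves implicit, and your numbers check out.
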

  \begin{proof}
  The proof follows from previous corollary and the fact that  $k^{\natural} =  -2 + \frac{2}{2 s+1}$ is a boundary admissible  for $\g^{\natural} = \mathfrak{sl}_2$ (and  therefore $L_{k^{\natural}} (\g^{\natural})$ has only one irreducible  ordinary module). 
  \end{proof}


\section{Examples and conjectures in the exceptional types}
\label{Sec:examples_exceptional}
We explain in this section 
the strategy to get examples of 
(new) collapsing levels or conformal levels 
for the $\W$-algebras associated with 
simple Lie algebras of exceptional types. 

Assume that $\g$ is a simple Lie algebra of 
exceptional type and keep the notation of 
\Cref{Sec:W-algebras}. 
We intend to apply \Cref{criterion} 
relatively to the embedding 
$$V^{k^\natural} \longhookrightarrow \W^{k}(\g,f).$$
By \Cref{Lem:CondA}, 
 condition \ref{crit-i} is always satisfied.  
On the other hand, it is possible 
to compute the central charge for 
each $\W^k(\g,f)$ for $f$ running 
through the finite set of nilpotent orbits. 
So one can easily detect 
the level $k$ for which the condition 
\ref{crit-ii} about the central charges 
is satisfied. 
For such a $k$, 
we check whether 
Condition \ref{crit-iii} holds 
using \Cref{Lem:condition_C}.  
If it is the case, then we 
check 
Condition \ref{crit-iv}: 
the conformal weight $h_i$ of each generator  
is known, and the value $h_{\lam_i}$ 
can be computed using \eqref{eq:h_lami}. 
More precisely, 
we consider the weights given in 
Column~\ref{col-VII} (cf.~Appendix~\ref{Appendix:exceptional}) 
and we compare for each of these weights $\lam_i$, 
the quantity 
$$\dfrac{(\lam_i , \lam_i + 2\rho_\natural)}{
2(k^\natural + h^\vee_\natural)},$$
where $\rho_\natural$ and $h^\vee_\natural$ 
are the half-sum of positive roots and the dual Coxeter number 
for $\g^\natural$,  
with the conformal weights strictly 
greater than $1$, given by Column~\ref{col-VIII} 
in this appendix.

By Part \ref{criterion-i} of \Cref{criterion}, 
if all the conditions \ref{crit-1}--\ref{crit-iv} 
hold, we conclude that $k$ is collapsing, 
and if only the conditions 
\ref{crit-1}--\ref{crit-iii} hold 
we conclude that the above embedding 
is conformal. 
One can use alternatively 
\Cref{Pro:paolo}. 
If furthermore 
$k^\natural$ 
is admissible, we have even a 
finite extension by \Cref{Pro:FE_admissible}.

It may happen that $k$ is collapsing 
even if one of the 
conditions \ref{crit-iii} or \ref{crit-iv} 
is not satisfied. 
In this case, we sometimes use other arguments 
to conclude. For instance, if the level 
is admissible, then we can exploit the results or \cite{AEM}. 

In the examples below, we look for solutions of the equation 
\begin{equation}\label{eqcc}c(k,f)=c(k^\natural)+r,\end{equation}
where $c(k,f)$ is the central charge of $\W^k(\g,f)$,
$c(k^\natural)$
is the central charge of the affine vertex algebra 
$\bigotimes_{i=1}^s V^{k_i^\natural}(\g_i^\natural)$ 
corresponding to the semisimple part of $\g^\natural$, 
and $r$ is the dimension of the center of $\g^\natural$. 
In solving  \eqref{eqcc}
when $r>0$ we are assuming that the Heisenberg algebra which is the affinization of the center of $\g^\natural$ acts nondegenerately.
To complete our analysis, when $r>0$ and $k$ is such that the Heisenberg algebra acts degenerately, we  also check  whether $c(k,f)=c(k^\natural)$.

We also formulate a number of conjectures 
about the associated variety 
of some non admissible affine vertex algebras. 
The general strategy is the following (and some specific 
examples are detailed). 
Assume that a level $k$ 
is collapsing, that is, 
$$\W_k(\g,f) \cong L_{k^\natural}(\g^\natural),$$
and that we know that the associated variety  
of the right-hand side is a nilpotent orbit closure. 
This happens for instance if 
$k^\natural$ is admissible. 
Denote by $H^0_f(L_k(\g))$ 
the 
Drinfeld--Sokolov reduction of $L_k(\g)$ 
corresponding to $f$, 
and $\mc{S}_f$ 
the nilpotent Slodowy slice through $f$. 
Assume that Kac--Wakimoto 
conjecture saying that 
$$\W_k(\g,f) = H^0_f(L_k(\g))$$
if $H^0_f(L_k(\g))$ is nonzero, 
holds. 
Then using the fact that the 
associated variety of $H^0_f(L_k(\g))$ 
is the intersection $X_{L_k(\g)} \cap \mc{S}_f$ 
we can sometimes predict the associated variety 
of $L_k(\g)$.  
In fact, in such a situation, we also 
assume that $X_{L_k(\g)}$ 
is irreducible (\cite[Conjecture 1]{AM18a}), that is, we assume that 
$X_{L_k(\g)}$ is the closure of 
a nilpotent orbit. 

In general, a dimension argument 
is enough to guess the correct variety. 
Remember here that 
$$\dim (\overline{\mb{O}} \cap \mc{S}_f ) =  
\dim \overline{\mb{O}}  - \dim G.f,$$
where $\mb{O}$ is a nilpotent orbit 
and $G.f$ is the nilpotent orbit of $f$. 
Sometimes we need 
to also use the Hasse diagram 
of the exceptional simple Lie algebra $\g$ 
to predict the variety. 

We refer to 
\Cref{Conj:G2-E6a1},  
\Cref{Conj:E6:D5a1},
\Cref{Conj:E6:2A1},
\Cref{Conj:F4-HR},
\Cref{Conj:E7:D5},
\Cref{Conj:E7:A23A1},
\Cref{Conj:E7:A5'}  
for such conjectures. 
We have not listed here all the examples that we can obtain in this way.

\subsection{Case of $G_2$}
We detail here our strategy and results 
for $\g$ of type $G_2$. 
We have $h^\vee=4$. 
There are four nonzero nilpotent orbits which 
are, in the Bala--Carter classification, 
$G_2$ (dimension 12), 
 $G_2(a_1)$ (dimension 10), 
 $\tilde A_1$ (dimension 8) 
 and 
 $A_1$ (dimension 6). 
 The central charges and other useful data 
 are given in \Cref{Tab:Data-G2}. Note that in all cases equation \eqref{eqcc} reads 
$c(k,f) = c(k^\natural)$.

\subsubsection*{$\ast$ $G_2$}
The solutions of the equation 
\eqref{eqcc}
are 
$$k + 4 = 4/7
\quad \text{ and } \quad 
7/12.$$ 
Here $\g^\natural=\{0\}$ 
and \Cref{criterion} applies 
using \Cref{Rem:reductive}. 
We conclude that both levels 
are collapsing. 
In fact, both levels 
$k = -4 + 4/7$ 
and $k = -4 + 7/12$ are admissible 
and we can alternatively 
use \cite{AEM}. 

\subsubsection*{$\ast$ $G_2(a_1)$} 
The solutions of the equation 
\eqref{eqcc}
are 
$$k + 4 = 7/6
\quad \text{ and } \quad 2.$$
Here \Cref{criterion} does not apply 
because the condition \ref{crit-iii} fails. 
But $k= - 4 +7/6$ is admissible 
and we know by \cite{AEM} 
that this level is admissible. 
On the other hand, by \cite{Fasquel-OPE} 
we know that $\W_{-2}(G_2,f) \cong \C$ 
for $f$ in the subregular nilpotent orbit $G_2(a_1)$. 
Hence $k = -4+2=-2$ is collapsing.  

\subsubsection*{$\ast$ $\tilde A_1$}
The solutions of the equation 
\eqref{eqcc}
are 
$$k + 4 = 2/3, \quad 7/6 
\quad \text{ and } \quad 2.$$ 
Here \Cref{criterion} applies, both Conditions \ref{crit-iii} 
and \ref{crit-iv} hold, and 
we conclude that all these levels are collapsing. 
Note that $k = -4+7/6$ is admissible 
and one can also use \cite{AEM}. 

To check Condition \ref{crit-iv}, we proceed as follows. 
In both cases, $\g^\natural$ has type $A_1$. 
In the case where $k=-4+2/3$, 
we have 
$k^\natural= k+3/2= -2+1/6$ from Column \ref{col-II} 
and the weights of $A_1$ to consider are 
$2\varpi_1$ 
and $3\varpi_1$ where 
$\varpi_1 = \rho_\natural$ is the fundamental weight of $A_1$, 
but the quantities 
$$
\dfrac{(2\varpi _1, 4 \varpi_1)}{
2 \times 1/6} = 12, 
\quad 
\dfrac{(2\varpi _1, 5 \varpi_1)}{
2\times 1/6} = 45/2,
$$  
do not appear as conformal weights in the Column \ref{col-VIII}. 
We argue similarly with 
$k=-2$.

\subsubsection*{$\ast$ $A_1$} 
The solutions of the equation 
\eqref{eqcc}
are 
$$k + 4 = 7/3, \quad 8/3  
\quad \text{ and } \quad 5/2.$$ 
Here \Cref{criterion} apply, both Conditions \ref{crit-iii} 
and \ref{crit-iv} hold for $k = -4+ 7/3$ 
and $k = -4+ 8/3$, but only 
Condition \ref{crit-iii} holds for $k = -h^\vee+ 5/2$. 
For the first two one we conclude that 
the level is collapsing. 
For the last one, it a priori gives only a conformal 
embedding. 
But all these levels are admissible and 
we know that for $k = -4+ 5/2$, 
$\W_{k}(\g,f)$ is a finite extension of $L_{k^\natural}(\g^\natural)$ 
(see \cite[Theorem~10.16]{AEM}). 

The necessary data and our conclusions are summarized 
in \Cref{Tab:Data-G2}. 

\begin{Rem}
From \Cref{Tab:Data-G2} 
we have a complete classification of collapsing levels 
for $\g=G_2$. 
\end{Rem}

\subsection{Case of $E_6$}
There are 20 nonzero nilpotent orbits. 
 The central charges and other useful data 
 are given in \Cref{Tab:Data-E6}.
We have $h^\vee=12$. 
We details below some examples, 
and formulate a few related conjectures. 
We denote by $X_V$ the associated 
variety of a vertex algebra $V$, 
and we write $\mb{O}_T$ 
for the nilpotent orbit 
of Bala--Carter type $T$. 

\subsubsection*{$\ast$ $E_6$}
The solutions of the equation 
$c(k,f) = c(k^\natural)$ 
are 
$$k+12= 12/13 \quad \text{ and } \quad 13/12.$$
Both give collapsing admissible levels. 

\subsubsection*{$\ast$ $E_6(a_1)$}
The solutions of the equation 
$c(k,f) = c(k^\natural)$ 
are 
$$k+12= 13/9 \quad \text{ and } \quad 9/7.$$
Both gives collapsing levels, 
and the second one is not admissible. 
So we have 
$$\W_{-12+13/9}(E_6,E_6(a_1))\cong \C  
\quad\text{and}\quad 
\W_{-12+9/7}(E_6,E_6(a_1))\cong \C.$$

\begin{Conj} 
\label{Conj:G2-E6a1}
We have 
$X_{L_{-12+9/7}(E_6)} = \overline{\mb{O}_{E_6(a_1)}}$. 
\end{Conj}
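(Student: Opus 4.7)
The strategy follows the template spelled out at the beginning of the section, exploiting the collapse $\W_{-12+9/7}(E_6, E_6(a_1)) \cong \C$ already established just above.

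First, I would assume the Kac--Wakimoto vanishing conjecture at this level, so that $\W_k(E_6, f) = H^0_f(L_k(E_6))$ for $f \in \mb{O}_{E_6(a_1)}$ and $k = -12+9/7$; the collapse then forces this cohomology to be nonzero. By the theorem of Arakawa identifying the associated variety of a Drinfeld--Sokolov reduction, we have $X_{\W_k(E_6,f)} = X_{L_k(E_6)} \cap \mc{S}_f$. Since $X_\C$ is a point, this intersection must be zero-dimensional and non-empty, which in particular shows that $G.f$ meets $X_{L_k(E_6)}$. As the latter is closed and $G$-invariant, we extract the lower bound $\overline{\mb{O}_{E_6(a_1)}} \subseteq X_{L_k(E_6)}$.

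Next, I would invoke the irreducibility conjecture, so that $X_{L_k(E_6)} = \overline{\mb{O}}$ for a single nilpotent orbit $\mb{O}$ of $E_6$. The dimension identity $\dim(\overline{\mb{O}} \cap \mc{S}_f) = \dim \overline{\mb{O}} - \dim G.f$, combined with the vanishing of the left-hand side, forces $\dim \overline{\mb{O}} = \dim \mb{O}_{E_6(a_1)}$. Together with $\mb{O}_{E_6(a_1)} \subseteq \overline{\mb{O}}$, this forces $\mb{O} = \mb{O}_{E_6(a_1)}$, since the only orbit strictly containing $\mb{O}_{E_6(a_1)}$ in the Hasse diagram of $E_6$ is the regular orbit, whose dimension is strictly larger.

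The main obstacle is that both Kac--Wakimoto's vanishing conjecture and the irreducibility conjecture are open at this non-admissible level, so the argument above is conditional. A natural route toward an unconditional proof is to produce explicit singular vectors in $V^{-12+9/7}(E_6)$ whose images in the Zhu $C_2$-algebra $R_{L_k(E_6)}$ cut out the defining ideal of $\overline{\mb{O}_{E_6(a_1)}}$; this would give the upper bound $X_{L_k(E_6)} \subseteq \overline{\mb{O}_{E_6(a_1)}}$ directly, bypassing the irreducibility assumption and yielding equality with the lower bound already extracted from the collapse. The hardest computational step is locating and handling those singular vectors, which in exceptional types is typically the serious obstacle.
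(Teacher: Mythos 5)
Your proposal matches the paper's own reasoning: the statement is formulated there as a conjecture precisely because the argument is conditional on the Kac--Wakimoto vanishing conjecture and the irreducibility of $X_{L_k(\g)}$, and the paper's supporting heuristic is exactly your chain (collapse $\W_{-12+9/7}(E_6,E_6(a_1))\cong\C$, the identity $X_{H^0_f(L_k(\g))}=X_{L_k(\g)}\cap\mc{S}_f$, the dimension formula $\dim(\overline{\mb{O}}\cap\mc{S}_f)=\dim\overline{\mb{O}}-\dim G.f$, and the Hasse diagram of $E_6$). Your closing remark about producing singular vectors to get an unconditional upper bound is a reasonable extra suggestion but goes beyond what the paper does.
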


\subsubsection*{$\ast$ $D_5$} 
The solutions of the equation 
$c(k,f) = c(k^\natural)+1$ 
are 
$$k+12= 13/8  \quad \text{ and } \quad 8/5.$$
(If $k^\natural=0$ then $c(k,f)\ne 0$).
\Cref{criterion} applies for $k=-12+8/5$. 
The level $-12+13/8$ is admissible and we know 
that $\W_{-12+13/8}(E_6, D_5)$ is lisse. 
On the other hand, $\g^\natural =\C$. 
So $-12+13/8$ cannot be collapsing, otherwise 
$\W_{-12+13/8}(E_6, D_5)$ would be isomorphic 
to $M(1)$ which is not lisse. 
So 
the embedding 
$V^{k^\natural}(\g^\natural) \cong M(1) \hookrightarrow 
\W_{-12+13/8}(E_6, D_5)$ 
is conformal and $k=-12+13/8$ is not collapsing. 
Moreover, by \Cref{prop:ext} \ref{prop:ext-i} 
we conclude that 
$\W_{-12+13/8}(E_6, D_5)$ is a lattice VOA.

\subsubsection*{$\ast$ $E_6(a_3)$} 
The solutions of the equation 
$c(k,f) = c(k^\natural)$ 
are 
$$k+12= 13/6 \quad \text{ and } \quad 2.$$
The level $-12+13/6$ is admissible and collapsing. 
The level $-10$ is not, and Condition \ref{crit-iii} of \Cref{criterion} 
does not hold so we cannot conclude. 

\subsubsection*{$\ast$ $D_5(a_1)$} 
 The unique solution of the equation 
$c(k,f) = c(k^\natural)$ 
is $9/4$, and there are no rational solutions 
of $c(k,f) = c(k^\natural)+1$. 
Condition \ref{crit-iii} of \Cref{criterion} 
does not hold and we cannot conclude. 
Nevertheless, we expect $\W_{-12+9/4}(E_6,D_5(a_1))$ 
to be lisse 
and we formulate a conjecture.  

\begin{Conj} 
\label{Conj:E6:D5a1}
$X_{L_{-12+9/4}(E_6)} = \overline{\mb{O}_{D_5(a_1)}}$.
\end{Conj}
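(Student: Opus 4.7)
The plan is to execute the strategy outlined at the start of \Cref{Sec:examples_exceptional}, combining the collapsing isomorphism $\W_{-12+9/4}(E_6,D_4)\cong L_{-3+3/2}(A_2)$ (which appears among the new isomorphisms in the introduction) with Arakawa's theorem that
\[ X_{H^0_f(L_k(\g))} \;=\; X_{L_k(\g)}\cap \mc{S}_f,\]
subject to the Kac--Wakimoto vanishing conjecture $\W_k(\g,f)\cong H^0_f(L_k(\g))$ when $H^0_f(L_k(\g))\neq 0$. Granting this identification at $(\g,f,k)=(E_6,D_4,-12+9/4)$, the collapse gives
\[ X_{L_{-12+9/4}(E_6)}\cap \mc{S}_{D_4} \;=\; X_{L_{-3+3/2}(A_2)}.\]

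First I would pin down the right-hand side. Since $k^\natural=-3+3/2$ is a (boundary) admissible level for $A_2$, by the classification of associated varieties of admissible affine vertex algebras, $X_{L_{-3+3/2}(A_2)}$ is a closure of a nilpotent orbit in $\mf{sl}_3$; one expects it to be the full nilpotent cone, of dimension $6$. Next, following the conjecture of \cite[Conjecture 1]{AM18a}, I would assume that $X_{L_{-12+9/4}(E_6)}$ is irreducible, hence equal to $\overline{\mb{O}}$ for a unique nilpotent orbit $\mb{O}\subset E_6$. Using the transverse-slice identity
\[ \dim\bigl(\overline{\mb{O}}\cap \mc{S}_{D_4}\bigr) \;=\; \dim \overline{\mb{O}} - \dim \mb{O}_{D_4},\]
the dimension of $\mb{O}$ is then forced. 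Consulting the Hasse diagram of nilpotent orbits in $E_6$, I would check that the unique orbit of the required dimension whose closure contains $\overline{\mb{O}_{D_4}}$ and produces an intersection of the correct dimension with $\mc{S}_{D_4}$ is precisely $\mb{O}_{D_5(a_1)}$.

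The main obstacle is twofold. First, the Kac--Wakimoto vanishing conjecture is still open in general; verifying it at $(E_6,D_4,-12+9/4)$ would require either a direct cohomological/character argument or an analysis of the maximal ideal of $V^{-12+9/4}(E_6)$ via singular vectors mapping under quantum Hamiltonian reduction. Second, even granting the dimension constraint, uniqueness of the orbit is delicate: several $E_6$-orbits may share dimension and satisfy the inclusion $\overline{\mb{O}_{D_4}}\subset\overline{\mb{O}}$, so the isolation of $D_5(a_1)$ would need to be corroborated by computing $\overline{\mb{O}_{D_5(a_1)}}\cap \mc{S}_{f'}$ for a second nilpotent $f'$ at the same level, and checking consistency with the (conjectural or established) reduction $\W_{-12+9/4}(E_6,f')$; in particular, choosing $f'\in \mb{O}_{D_5(a_1)}$ would predict a finite-dimensional reduction, which serves as a computable test.
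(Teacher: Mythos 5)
Your overall route is the same as the one the paper uses to motivate this statement (which, note, is only a conjecture there, not a theorem): start from the established collapse $\W_{-12+9/4}(E_6,D_4)\cong L_{-3+3/2}(A_2)$, invoke the Kac--Wakimoto identification of $\W_k(\g,f)$ with $H^0_f(L_k(\g))$ together with $X_{H^0_f(L_k(\g))}=X_{L_k(\g)}\cap\mc{S}_f$, assume irreducibility of $X_{L_k(\g)}$, and pin down the orbit by the dimension identity $\dim(\overline{\mb{O}}\cap\mc{S}_f)=\dim\overline{\mb{O}}-\dim G.f$ plus the Hasse diagram. So far, so good.

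However, there is a concrete numerical error that would derail your dimension count. You assert that $X_{L_{-3+3/2}(A_2)}$ "is the full nilpotent cone, of dimension $6$". The level $-3/2$ is (boundary) admissible for $\mathfrak{sl}_3$ with denominator $q=2$, and by Arakawa's description of associated varieties at admissible levels for type $A$ this is the closure of the orbit of nilpotents $x$ with $x^2=0$, i.e.\ the partition $(2,1)$: the \emph{minimal} nilpotent orbit closure, of dimension $4$ (this is exactly what the paper records when it says the slice $\overline{\mb{O}_{D_5(a_1)}}\cap\mc{S}_{D_4}$ should be the minimal nilpotent orbit closure of $A_2$). The full nilpotent cone of $\mathfrak{sl}_3$ would require denominator $q\geq 3$. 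With the correct value $4$ and $\dim\mb{O}_{D_4}=60$ one gets $\dim\overline{\mb{O}}=64=\dim\mb{O}_{D_5(a_1)}$, as wanted; with your value $6$ the count forces $\dim\overline{\mb{O}}=66$, which is the dimension of $\mb{O}_{E_6(a_3)}$, so your argument as written would select the wrong orbit. Your closing remarks on the two genuine obstacles (the open Kac--Wakimoto conjecture and the possible non-uniqueness of the orbit of the forced dimension above $\mb{O}_{D_4}$) are fair and consistent with the fact that the paper only states this as a conjecture; but the dimension of $X_{L_{-3/2}(A_2)}$ must be corrected before the heuristic even points at $D_5(a_1)$.
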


\subsubsection*{$\ast$ $A_5$}
The solutions of the equation 
$c(k,f) = c(k^\natural)$ 
are 
$$k+12= 12/7, \quad 13/6 \quad \text{ and } \quad 9/5.$$
\Cref{criterion} apply for the second levels. 
\Cref{criterion}, Condition \ref{crit-iii}, applies 
for the last one, 
but not Condition~\ref{crit-iv}. 
Indeed, $\g^\natural$ has type $A_1$, 
we have $k^\natural=k+17/2=-2+3/10$ 
and for the weight $\varpi_1 = \rho^\natural$ 
of $A_1$ we get 
$$\dfrac{(\varpi_1, 3 \varpi_1)}{2 \times 3/10} = 5/2 
$$ 
which appears as a conformal weights in Column \ref{col-VII}.  
So Condition~\ref{crit-iv} fails. 
However, since $k^\natural$ is admissible 
we conclude that 
$\W_{-12+9/5} (E_6,A_5)$ 
is a finite extension of 
$L_{-2+3/10}(A_1)$. 

For the first case, we know 
that 
$\W_{-12+12/7} (E_6,A_5) \cong L_{-2+3/14}(A_1) 
\oplus L_{-2+3/14}(A_1;\varpi_1)$ (\cite{AEM}). 
Therefore 
$k$ is collapsing if and only if $k+12=13/6$. 
Moreover, 
$$\W_{-12+13/6} (E_6,A_5) \cong L_{-2+2/3}(A_1).$$

\subsubsection*{$\ast$ $A_4 + A_1$}
The solutions of the equation 
$c(k,f) = c(k^\natural)+1$ 
are 
$$k+12= 39/14  \quad \text{ and } \quad 8/3,$$
and there is no solution for $c(k,f) = c(k^\natural)$. 
Also Condition \ref{crit-iii} of \Cref{criterion} does not hold.  

 \subsubsection*{$\ast$ $D_4$}
The solutions of the equation 
$c(k,f) = c(k^\natural)$ 
are 
$$k+12= 12/7, \quad 13/6 \quad \text{ and } \quad 9/4,$$
and \Cref{criterion} holds for all cases, 
so $k$ is collapsing if and only if 
$k+12=12/7,13/6,9/4$. 
Moreover, 
$$\W_{-12+12/7}(E_6,D_4)\cong L_{-3+3/7}(A_2), 
\quad \W_{-12+13/6}(E_6,D_4)\cong L_{-3+4/3}(A_2),$$
$$\W_{-12+9/4}(E_6,D_4)\cong L_{-3+3/2}(A_2).$$ 
The last isomorphism gives a new quasi-lisse 
$\W$-algebra 
and suggests  
that the intersection of $\overline{\mb{O}_{D_5(a_1)}} $ 
with the Slodowy slice 
associated with $D_4$ is 
the minimal nilpotent orbit closure of $A_2$. 
Indeed, remember that by the case $D_5(a_1)$ 
we expect that $X_{L_{-12+9/4}(E_6)} = \overline{\mb{O}_{D_5(a_1)}}$ 
(see \Cref{Conj:E6:D5a1}). 
This fits the Hasse diagram of $E_6$.

\subsubsection*{$\ast$ $A_4$} 
The solutions of 
the equation 
$c(k,f) = c(k^\natural)+1$ 
are 
$$k+12= 13/5,\quad 9/4 \quad \text{ and } \quad 8/3.$$
\Cref{criterion} applies only for $8/3$, 
and we get 
$$\W_{-12+8/3}(E_6,A_4)\cong L_{-2+2/3}(A_1)\otimes M(1).$$ 
Since $\W_{-12+13/5}(E_6,A_4)$ is quasi-lisse, 
it must be a (possibly trivial) conformal extension of 
$L_{-2+3/5}(A_1) \otimes V_{\sqrt{2p}\mathbb Z}$ for some $p \in \mathbb Z_{>0}$ by 
\Cref{prop:ext} \ref{prop:ext-ii}.

The unique solution of 
the equation is 
$c(k,f) = c(k^\natural)$ 
 $$k+12=12/5,$$
 and \Cref{criterion} applies. 
 So we conclude that 
 $$\W_{-12+12/5}(E_6,A_4)\cong L_{-2+2/5}(A_1).$$

\subsubsection*{$\ast$ $D_4(a_1)$}
The solutions of 
the equation 
$c(k,f) = c(k^\natural)+2$ 
are 
$$k+12= 13/4 \quad \text{ and } \quad 4,$$ 
but \Cref{criterion} does not apply. 
The level $k=-12+13/4$ is admissible 
and we know it cannot be admissible 
since $\W_{-12+13/4}(E_6,D_4(a_1))$ 
is admissible while $M(2)$ is not. 
On the other hand, if $k^\natural=0$ then $c(k,f)\ne 0$. 

\subsubsection*{$\ast$ $A_3+A_1$} 
The equations 
$c(k,f) = c(k^\natural)+1$ and  $c(k,f) = c(k^\natural)$ 
have no rational solutions, 
so we have no collapsing levels.

\subsubsection*{$\ast$ $2 A_2 + A_1$}
The unique rational solution of the equation 
$c(k,f) = c(k^\natural)$ 
is 
$$k+12= 13/3.$$
\Cref{criterion} does not apply but 
this corresponds to an admissible which 
we know it is collapsing.

\subsubsection*{$\ast$ $A_3$}
The solutions of the equation 
$c(k,f) = c(k^\natural)+1$ 
are 
$$k+12= 12/5, \quad 13/4 \quad \text{ and } \quad 4.$$
\Cref{criterion} applies only for $k=-8$ 
and we have 
$$\W_{-8}(E_6,A_3) \cong M(1) \otimes L_{-1}(A_1).$$
On the other hand, we know that 
$\W_{k}(E_6,A_3)$ is quasi-lisse for $k+12=12/5, \, 13/4$. 
Hence we conclude that 
$k$ is collapsing if and only if $k=-8$. 
If $k^\natural=0$, then $c(k,f)\ne c(k^\natural)$.

\subsubsection*{$\ast$ $A_2+2 A_1$}
The solutions of the equation 
$c(k,f) = c(k^\natural)+1$ 
are 
$$k+12= 13/3 \quad \text{ and } \quad 5.$$
The first one corresponds to an admissible level, 
and we know that it cannot give a collapsing level. 
Indeed, if it were the case, 
$\W_{-12+13/3}(E_6, A_2+2 A_1)$ which is quasi-lisse, 
would be isomorphic to $M(1) \otimes L_{-1}(A_1)$, 
a contradiction. 
\Cref{criterion}, Condition \ref{crit-iii}, applies 
but not Condition~\ref{crit-iv} in both cases. 

The equation 
$c(k,f) = c(k^\natural)$ has one 
solution 
$$k+12= 9/2.$$
Here \Cref{criterion} applies. 
We conclude 
that $-12+9/2$ is collapsing 
and we have 
$$\W_{-12+9/2}(E_6,A_2+2 A_1)\cong\C.$$

\subsubsection*{$\ast$ $2A_2$}
The solutions of the equation  
$c(k,f) = c(k^\natural)$ 
are 
$$k+12= 13/3 \quad \text{ and } \quad 3.$$
The level $ k=-12+13/3$ is admissible and we know that 
$k=-12+13/3$ is collapsing. 
For $k=-9$, \Cref{criterion}, Condition~\ref{crit-iii} 
holds but not Condition~\ref{crit-iv}. 
So we conclude that 
the embedding 
$\tilde V_{-3}(G_2) \hookrightarrow 
\W_{-9}(E_6, A_2)$ is conformal 
and we do not know whether the level 
is collapsing.

\subsubsection*{$\ast$ $A_2+A_1$}
The equation 
$c(k,f) = c(k^\natural)+1$ 
has no rational solution, 
and the equation 
$c(k,f) = c(k^\natural)$ has one 
solution 
$$k+12= 9/2.$$
\Cref{criterion} does no apply and we cannot 
conclude.

\subsubsection*{$\ast$ $A_2$}
The solutions of the equation 
$c(k,f) = c(k^\natural)$ 
are 
$$k+12= 13/3,\, 9/2 \quad \text{ and } \quad 6.$$
The first one corresponds to an admissible 
level, and we know it is not collapsing. 
In fact we know 
that 
$\W_{-12+13/3}(E_6,A_2)$ 
is a finite extension 
of $L_{-3+4/3}(A_2) \otimes 
L_{-3+4/3}(A_2)$. 
\Cref{criterion} applies for the last two ones, 
and we conclude that 
$k$ is collapsing if and only if $k=-12+9/2$ 
or $k=-6$.

\subsubsection*{$\ast$ $3A_1$} 
The solutions of the equation 
$c(k,f) = c(k^\natural)$ 
are 
$$k+12= 13/2 \quad \text{ and } \quad 6.$$
\Cref{criterion} applies in both cases 
and 
we have 
$$\W_{-12+13/2}(E_6,3 A_1) \cong L_{1}(A_2) 
\quad \text{ and } \quad
\W_{-6}(E_6,3 A_1) \cong L_{-2+3/2}(A_1).$$ 
Moreover,  
$k$ is collapsing if and only if $k=-12+13/2$ 
or $k=-6$. 

\subsubsection*{$\ast$ $2A_1$}
The solutions of the equation 
$c(k,f) = c(k^\natural)+1$ 
are 
$$k+12= 13/2,\quad 9/2 \quad \text{ and } \quad 8.$$
The first one corresponds to an admissible level, 
and we know that it cannot give a collapsing level. 
Indeed, if it were the case, 
$\W_{-12+13/2}(E_6, 2A_1)$ which is quasi-lisse, 
would be isomorphic to $M(1) \otimes L_{-3/2}(B_3)$, 
a contradiction. 
Only Condition~\ref{crit-iii} \Cref{criterion} applies for 
$k-=12+9/2$ and $k=-4$. 


The unique solution of the equation
$c(k,f) = c(k^\natural)$ 
is  
$$k+12= 6.$$
\Cref{criterion} applies in this case, 
so $k=-6$ is collapsing 
and we conclude that 
$$\W_{-6}(E_6,2A_1) \cong L_{-2}(B_3).$$
In particular, this positively answers part of \cite[Conjecture 10.5]{AEM}.

We know that $X_{L_{-2}(B_3)} = \overline{\mb{O}_{\text{short}}}$, 
where $\mb{O}_{\text{short}}$ is the short nilpotent 
orbit of $B_3$ of dimension 10, see \cite{AM18a}. 
If we believe that the associated variety 
of $\W_{-6}(E_6,2A_1)$ is the intersection 
of that of $L_{-6}(E_6)$ with the Slodowy 
slice associated with $2A_1$, then 
we can formulate a conjecture, 
observing that the orbit $A_2$ is the only 
one in $E_6$ of dimension $42=\dim \mb{O}_{2A_1}+ 10$ 
(we have $\dim \mb{O}_{2A_1}=32$).

\begin{Conj}
\label{Conj:E6:2A1}
We have $X_{L_{-6}(E_6)}= \overline{\mb{O}_{A_2}}$.
\end{Conj}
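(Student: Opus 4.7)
The plan is to combine the collapsing isomorphism $\W_{-6}(E_6,2A_1) \cong L_{-2}(B_3)$ established just above with the general strategy outlined at the start of \Cref{Sec:examples_exceptional}. First I would verify the Kac--Wakimoto vanishing at $(k,f)=(-6,2A_1)$, i.e., that
$$\W_{-6}(E_6,2A_1) \cong H^0_{f}(L_{-6}(E_6)),\qquad f\in\mb{O}_{2A_1}.$$
Non-vanishing is automatic since $L_{-2}(B_3)\neq 0$. Granted this identification, the Arakawa formula $X_{H^0_f(L_k(\g))}=X_{L_k(\g)}\cap \mc{S}_f$ yields
$$X_{L_{-6}(E_6)}\cap \mc{S}_{2A_1} \;=\; X_{L_{-2}(B_3)}\;=\; \overline{\mb{O}_{\mathrm{short}}},$$
an irreducible $10$-dimensional subvariety of $\mc{N}(B_3)$ by \cite{AM18a}.

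Next I would show that $X_{L_{-6}(E_6)}$ is an irreducible closed $G$-invariant subvariety of the nilpotent cone $\mc{N}(E_6)$. Containment in $\mc{N}(E_6)$ should be deduced from the quasi-lisseness of the Drinfeld--Sokolov reduction together with a propagation argument from the Slodowy slice to the ambient variety; irreducibility is the content of \cite[Conjecture 1]{AM18a} in this situation. Granted these two properties, $X_{L_{-6}(E_6)} = \overline{\mb{O}}$ for a unique nilpotent orbit $\mb{O}\subseteq \mc{N}(E_6)$, and the identity
$$\dim \overline{\mb{O}} - \dim \mb{O}_{2A_1} \;=\;\dim\bigl(\overline{\mb{O}}\cap \mc{S}_{2A_1}\bigr)\;=\;10,$$
together with $\dim \mb{O}_{2A_1}=32$, forces $\dim \mb{O}=42$. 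Scanning the list of nilpotent orbits of $E_6$, the only orbit of dimension $42$ is $\mb{O}_{A_2}$, so $\mb{O}=\mb{O}_{A_2}$ as claimed. Moreover $\mb{O}_{2A_1}\subset\overline{\mb{O}_{A_2}}$ in the Hasse diagram, confirming consistency with the non-triviality of the intersection.

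As a cross-check, I would run the same argument with the other collapsing rows in \Cref{Tab:Data-E6} at level $-6$: for instance the identifications $\W_{-6}(E_6,3A_1) \cong L_{-2+3/2}(A_1)$ and $\W_{-6}(E_6,A_2+2A_1)\cong\C$ must give, via the Slodowy-slice formula, dimensional data for $X_{L_{-6}(E_6)} \cap \mc{S}_{3A_1}$ and $X_{L_{-6}(E_6)}\cap\mc{S}_{A_2+2A_1}$ that are compatible with $\overline{\mb{O}_{A_2}}$. In particular, the latter collapsing forces $\mb{O}_{A_2+2A_1}\not\subset X_{L_{-6}(E_6)}$, which is indeed the case for $\overline{\mb{O}_{A_2}}$.

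The hard part is unquestionably the second step: at a non-admissible level such as $k=-6$, both containment of $X_{L_{-6}(E_6)}$ in $\mc{N}(E_6)$ and its irreducibility are at present only conjectural. A possible bypass, in the spirit of the arguments developed for minimal $\W$-algebras in \cite{AKMPP18,AM18a}, is to exhibit explicit singular vectors in $V^{-6}(E_6)$ whose symbols in $\gr V^{-6}(E_6) = \C[\g]$ cut out the defining ideal of $\overline{\mb{O}_{A_2}}$; this would give $X_{L_{-6}(E_6)}\subseteq\overline{\mb{O}_{A_2}}$ directly, after which the reverse inclusion is forced by the dimension count above.
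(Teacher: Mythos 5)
Your reasoning reproduces exactly the paper's own justification: the collapsing isomorphism $\W_{-6}(E_6,2A_1)\cong L_{-2}(B_3)$, the fact that $X_{L_{-2}(B_3)}=\overline{\mb{O}_{\text{short}}}$ has dimension $10$, the Slodowy-slice formula, and the dimension count $32+10=42$ singling out $\mb{O}_{A_2}$ as the unique orbit of that dimension consistent with the Hasse diagram. Note, however, that the paper states this only as a \emph{conjecture}, for precisely the reasons you flag in your final paragraph: at the non-admissible level $k=-6$ neither the Kac--Wakimoto identification $\W_{-6}(E_6,2A_1)\cong H^0_f(L_{-6}(E_6))$, nor the containment of $X_{L_{-6}(E_6)}$ in the nilpotent cone, nor its irreducibility (\cite[Conjecture 1]{AM18a}) is currently established, so what you have written is the same heuristic the authors give rather than a proof. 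Your additional consistency checks against the other collapsing levels at $k=-6$ and the suggestion of exhibiting explicit singular vectors are sensible and go slightly beyond the paper, but they do not close any of these gaps.
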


\subsubsection*{$\ast$ $A_1$}
The solutions of the equation 
$c(k,f) = c(k^\natural)$ 
are 
$$k+12= 13/2,\quad 8 \quad \text{ and } \quad 9.$$
The first one corresponds to an admissible level 
which is not collapsing. 
In fact, we know that 
$\W_{-12+13/2}(E_6,A_1)$ 
is a finite extension 
of $L_{-6+7/2}(A_5)$. 
For the last two ones, \Cref{criterion} apply, 
and we have 
$$\W_{-4}(E_6,A_1) \cong L_{-1}(A_5) 
\quad \text{ and } \quad 
\W_{-3}(E_6,A_1)\cong \C.$$


\subsection{Case of $F_4$} 
Our conclusions are summarised in \Cref{Tab:Data-F4}. 
We proceed as the cases of $G_2$ or $E_6$. 
We omit here the details. 
From the table, we see that our 
list of collapsing levels for $F_4$ is almost 
exhaustive. The only undetermined 
cases is for the orbits $F_4(a_1)$ 
and $C_3(a_1)$ and the common level $-6$. 

We conjecture the following. 

\begin{Conj} 
\label{Conj:F4-HR}
We have 
$\W_{-6}(F_4, F_4(a_3)) \cong \C$ 
and $\W_{-6}(F_4, C_3(a_1)) \cong L_{-1/2}(A_1)$. 
Moreover, 
$$X_{L_{-6}(F_4)}= \overline{\mathbb{O}_{F_4(a_3)}}.$$
\end{Conj}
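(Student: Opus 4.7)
For this orbit $\g^\natural = 0$, so I would invoke the remark following Proposition~\ref{Pro:paolo}. Concretely, I would compute the central charge $c_{\W}$ of $\W^{-6}(F_4,F_4(a_3))$ directly from formula~\eqref{CC} and check that it vanishes. Next, using the $\mathfrak{sl}_2$-decomposition of $F_4$ attached to a representative of $F_4(a_3)$ (read off from the weighted Dynkin diagram of the orbit), I would verify that $\mathrm{mult}(\mathbb{C}\otimes\mathrm{adj}) = 1$, so that Lemma~\ref{Lem:condition_C} applies and the weight-two subspace of $\W^{-6}(F_4,F_4(a_3))$ is one-dimensional (spanned by the conformal vector). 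Corollary~\ref{Co:trivial_quotient} then forces $\W_{-6}(F_4,F_4(a_3))\cong\mathbb{C}$.

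\textbf{Step 2 (the orbit $C_3(a_1)$).} Here $\g^\natural \cong \mathfrak{sl}_2$, and one first checks, using~\cite{KW04}, that the induced level at $k=-6$ is $k^\natural = -1/2$. Since $k^\natural + h^\vee_{\mathfrak{sl}_2} = 3/2 \notin \mathbb{Q}_{\leq 0}$, condition~\ref{crit-1} of Criterion~\ref{criterion} holds, and condition~\ref{crit-i} is automatic by Lemma~\ref{Lem:CondA}. I would then compute $c_{\W}$ from~\eqref{CC} and verify that it equals the central charge $c(V^{-1/2}(\mathfrak{sl}_2)) = -1$, giving condition~\ref{crit-ii}. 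From the $\mathfrak{sl}_2$-decomposition of $F_4$ attached to $C_3(a_1)$, one reads off $\mathrm{mult}(\mathbb{C}\otimes\mathrm{adj}) = 1$ and invokes Lemma~\ref{Lem:condition_C} to obtain~\ref{crit-iii}. Finally, for each strong generator $y_i$ with $h_i > 1$, I would extract the $\mathfrak{sl}_2$-weight $\lambda_i$ of its top component from the decomposition and compare $h_i$ with $h_{\lambda_i}$ via~\eqref{eq:h_lami} using $k^\natural = -1/2$; if these are all distinct, condition~\ref{crit-iv} holds and conclusion~\ref{criterion-i} of Criterion~\ref{criterion} gives $\W_{-6}(F_4, C_3(a_1)) \cong L_{-1/2}(\mathfrak{sl}_2)$.

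\textbf{Step 3 (the associated variety of $L_{-6}(F_4)$).} Granted Step~1, the reduced algebra $\W_{-6}(F_4,F_4(a_3))=\mathbb{C}$ is in particular nonzero. Assuming (or separately establishing) the Kac--Wakimoto vanishing conjecture at the non-admissible level $k=-6$, namely $\W_{-6}(F_4,F_4(a_3)) = H^0_{F_4(a_3)}(L_{-6}(F_4))$, the main result of~\cite{Ara09b} gives
\[
\{0\} \;=\; X_{\mathbb C} \;=\; X_{L_{-6}(F_4)} \cap \mc{S}_{F_4(a_3)}
\]
inside $\mc{S}_{F_4(a_3)}$. In particular $\mathbb{O}_{F_4(a_3)} \subset X_{L_{-6}(F_4)}$, and no $G$-orbit strictly containing $F_4(a_3)$ lies in $X_{L_{-6}(F_4)}$, since any such orbit would intersect $\mc{S}_{F_4(a_3)}$ in positive dimension. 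Coupled with the irreducibility prediction~\cite[Conjecture~1]{AM18a} for $X_{L_{-6}(F_4)}$ and the Hasse diagram of nilpotent orbits in $F_4$, this forces $X_{L_{-6}(F_4)} = \overline{\mathbb{O}_{F_4(a_3)}}$. As a consistency check, the conjectured equality $\W_{-6}(F_4,C_3(a_1))\cong L_{-1/2}(\mathfrak{sl}_2)$ from Step~2 matches the expected dimension: $\dim\overline{\mathbb O_{F_4(a_3)}}-\dim\mathbb O_{C_3(a_1)} = 2$, and the associated variety of $L_{-1/2}(\mathfrak{sl}_2)$ is indeed the two-dimensional nilpotent cone of $\mathfrak{sl}_2$.

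\textbf{Main obstacle.} The hardest step is Step~3. At the non-admissible level $k=-6$ the Kac--Wakimoto conjecture is not automatic, and it must be verified in this specific case, for instance by exhibiting enough singular vectors in $V^{-6}(F_4)$ whose images under the Drinfeld--Sokolov functor cut out the expected ideal, or by a direct exactness argument for the $+$-reduction at $F_4(a_3)$. Without such an input one can only conclude the inclusion $\overline{\mathbb O_{F_4(a_3)}}\subseteq X_{L_{-6}(F_4)}$ together with the absence of strictly larger orbits in $X_{L_{-6}(F_4)}$, which is weaker than the equality claimed; the irreducibility of $X_{L_{-6}(F_4)}$ is a separate ingredient that likely requires the explicit production of enough singular vectors in the universal affine vertex algebra.
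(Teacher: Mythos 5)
There is a genuine gap, and it is precisely the reason the paper states this as a \emph{conjecture} rather than a theorem. Your Steps 1 and 2 both hinge on verifying that $\mathrm{mult}(\mathbb{C}\otimes\mathrm{adj})=1$, but this verification fails. For $F_4(a_3)$ the decomposition of $F_4$ into $\mathfrak{s}$-modules is $[6]^{2}\oplus[4]^{4}\oplus[2]^{6}$ (with $\g^\natural=0$), so the multiplicity of $\mathrm{adj}$ is $6$ and the weight-two subspace of $\W^{-6}(F_4,F_4(a_3))$ is six-dimensional; \Cref{Co:trivial_quotient} does not apply, even though the central charge does vanish at $k=-6$. For $C_3(a_1)$ the decomposition is $[0;6]\oplus[1;5]\oplus[1;3]^{2}\oplus[0;4]\oplus[0;2]^{3}\oplus[2;0]$, so $\mathrm{mult}(\mathbb{C}\otimes\mathrm{adj})=3$ and Condition~\ref{crit-iii} of \Cref{criterion} fails (your computations of $k^\natural=-1/2$ and of the matching central charges $c=-1$ are correct, but they are not enough). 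This is exactly what the entries $\sf{DNA}$ at $k+h^\vee=3$ for orbits n$^\circ$9 and n$^\circ$10 in \Cref{Tab:Data-F4} record. The only support the paper offers for the two isomorphisms is a heuristic via Hamiltonian reduction by stages: the \emph{proven} collapse $\W_{-6}(F_4,B_2)\cong L_{-1/2}(A_1)\otimes L_{-1/2}(A_1)$, combined with $H^0_f(L_{-1/2}(A_1)\otimes L_{-1/2}(A_1))\cong L_{-1/2}(A_1)$ and $H^0_f(L_{-1/2}(A_1))\cong\C$, suggests the chain $B_2\to C_3(a_1)\to F_4(a_3)$, conditional on the Kac--Wakimoto conjecture at this non-admissible level.

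Your Step 3 is essentially the paper's intended strategy for the associated-variety statement (intersection with the Slodowy slice, irreducibility from \cite{AM18a}, dimension count against the Hasse diagram), and your consistency check against $\W_{-6}(F_4,C_3(a_1))\cong L_{-1/2}(A_1)$ is the same one the paper makes implicitly. But since Step 3 takes Step 1 as input, and Step 1 as written would not go through, the proposal does not establish the statement; at best it reproduces the conditional reasoning that led the authors to formulate the conjecture. If you want to attack the isomorphisms directly you would need a different tool — e.g.\ \Cref{Lem:trivial_quotient} with an explicit computation of the pairings $(x_1|x_j)$ on the six-dimensional weight-two space, the Griess-algebra analysis of \Cref{Griess}, or explicit OPEs — none of which is carried out in the paper.
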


Note that, from \Cref{Tab:Data-F4} (orbit n$^\circ$7), we have 
$$\W_{-6}(F_4,B_2) \cong L_{-1/2}(A_1) \otimes L_{-1/2}(A_1).$$
In particular, $\W_{-6}(F_4,B_2)$ 
is a new non-admissible quasi-lisse simple $\W$-algebra. 
Then the above two conjectures can interpreted by 
Hamiltonian reduction by stages 
between the following nilpotent orbits in $F_4$:

\begin{center}
\hspace{0.25cm}\xymatrix{
F_4(a_3) \\ 
C_3(a_1) 
\ar[u] 
\\ 
\ar[u]  
\ar@/_3pc/[uu]
B_2}
\end{center}

Observe first 
that 
$$H^0_{f}(L_{-1/2}(A_1) \otimes L_{-1/2}(A_1)) 
\cong L_{-1/2}(A_1)
\quad 
\text{ and }
\quad 
H^0_{f}(L_{-1/2}(A_1)) 
\cong \C,$$ 
where $f$ is a nonzero element in $A_1 \times \{0\} \cong \mf{sl}_2$. 
Then if we assume that Kac--Wakimoto conjecture 
holds for the orbits associated to $B_2$ and $F_4(a_3)$ 
for the level $-6$, that is, 
$$H_{B_2}^0(L_{-6}(F_4))  \cong \W_{-6}(F_4, B_2)
\quad 
\text{ and }
\quad  
H_{F_4(a_3)}^0(L_{-6}(F_4))  \cong \W_{-6}(F_4, F_4(a_3)) 
\cong \C,$$
\Cref{Conj:F4-HR} 
can be interpreted as 
$$H_{C_3(a_1)}^0 
( 
H_{B_2}^0(L_{-6}(F_4))
) 
{\cong} 
H^0_{F_4(a_3)} (L_{-6}(F_4)) 
{\cong}  \C.$$


\subsection{Case of $E_7$} 
Our conclusions are summarised in \Cref{Tab:Data-E7}--\Cref{Tab:Data-E7-3}.  
We comment here only a few interesting cases. 

\subsubsection*{$\ast$ $E_6(a_1)$}  
The solutions of the equation 
$c(k,f) = 1$ 
are 
$$k+18= 19/9 \quad \text{ and } \quad 9/4.$$ 
\Cref{criterion} applies to the first case and we get 
$$\W_{-18+19/4}(E_7,E_6(a_1)) \cong M(1).$$
On the other hand, 
$\W_{-18+19/9}(E_7,E_6(a_1))$ is lisse. 
So it is a finite extension of rank one lattice VOA by 
\Cref{prop:ext} \ref{prop:ext-i}.
We conclude that $k$ is collapsing 
if and only if $k=-18+9/4$.

\subsubsection*{$\ast$ $D_5$} 
The solutions of the equation 
$c(k,f) = c(k^\natural)$ 
are 
$$k+18= 19/8, \quad 7/3 \quad \text{ and } \quad 12/5.$$
\Cref{criterion} applies to the second case and we get 
 $$\W_{-18+7/3}(E_7,D_5) \cong L_{-2+1/3}(A_1)\otimes L_{-2+2/3}(A_1).$$
 On the other hand we know by \cite{AEM} that 
 $\W_{-18+19/8}(E_7,D_5)$ is a finite extension 
 of $L_{-2+3/8}(A_1)\otimes L_{-2+3/4}(A_1)$. 

We claim that the non-admissible 
$\W$-algebra 
$\W_{-18+12/5}(E_7,D_5)$ 
is a finite extension of 
the admissible vertex algebra $L_{-2+2/5}(A_1)\otimes L_{-2+4/5}(A_1)$ 
by \Cref{Pro:FE_admissible}. 
Combining with \Cref{Conj:finite_extension} 
(Conjecture 1.3 of \cite{AEM})  
about finite extensions, we formulate a conjecture. 
 
\begin{Conj}
\label{Conj:E7:D5}
We have $X_{L_{-18+12/5}(E_7)}=\overline{\mathbb{O}_{E_7(a_4)}}$.
\end{Conj}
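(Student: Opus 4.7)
The plan is to deduce the associated variety of $L_{-18+12/5}(E_7)$ from that of the $\W$-algebra $\W_{-18+12/5}(E_7,D_5)$, by combining the Kac--Wakimoto conjecture with a dimension count. As established just above the statement, $\W_{-18+12/5}(E_7,D_5)$ is a finite extension of the admissible affine vertex algebra $L_{-2+2/5}(A_1)\otimes L_{-2+4/5}(A_1)$, whose associated variety is the product of the nilpotent cones of $\mathfrak{sl}_2$, and is therefore four-dimensional. Since taking a finite extension preserves the Krull dimension of the associated variety, I would conclude $\dim X_{\W_{-18+12/5}(E_7,D_5)}=4$.

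Next I would invoke the Kac--Wakimoto conjecture, $\W_k(\g,f)\cong H^0_f(L_k(\g))$ whenever the right-hand side is non-zero, and apply the general result of \cite{Ara09b} asserting $X_{H^0_f(L_k(\g))} = X_{L_k(\g)} \cap \mathcal{S}_{D_5}$. This would force $X_{L_{-18+12/5}(E_7)} \cap \mathcal{S}_{D_5}$ to be four-dimensional, and in particular it would show $\mathbb{O}_{D_5} \subset X_{L_{-18+12/5}(E_7)}$.

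Assuming next the irreducibility conjecture of \cite{AM18a}, so that $X_{L_{-18+12/5}(E_7)}=\overline{\mathbb{O}}$ for a single nilpotent orbit $\mathbb{O}$, I would scan the Hasse diagram of $E_7$ above $\mathbb{O}_{D_5}$ for orbits satisfying $\dim \mathbb{O}=\dim \mathbb{O}_{D_5}+4$, using the formula $\dim(\overline{\mathbb{O}}\cap \mathcal{S}_f)=\dim \overline{\mathbb{O}}-\dim G.f$. The orbit $\mathbb{O}_{E_7(a_4)}$ then appears as the unique candidate of matching dimension that dominates $\mathbb{O}_{D_5}$ in the closure order, which would yield the claimed identification.

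The hard part will be verifying the hypotheses: at the non-admissible level $-18+12/5$, neither the Kac--Wakimoto conjecture nor the irreducibility conjecture of \cite{AM18a} is known in general, and purely vertex-algebraic arguments seem out of reach. A realistic first step is to prove that $L_{-18+12/5}(E_7)$ is quasi-lisse; this would reduce the analysis of $X_{L_{-18+12/5}(E_7)}$ to finitely many candidate orbit closures, and would make the exactness/non-vanishing statements needed to identify $\W_{-18+12/5}(E_7,D_5)$ with $H^0_{D_5}(L_{-18+12/5}(E_7))$ more accessible. An alternative and complementary route is to exhibit explicit singular vectors in $V^{-18+12/5}(E_7)$ and show that the ideal they cut out in the Zhu $C_2$-algebra coincides with the defining ideal of $\overline{\mathbb{O}_{E_7(a_4)}}$, the reverse inclusion then following from the orbit $\mathbb{O}_{D_5}$ containment deduced from the $\W$-algebra data.
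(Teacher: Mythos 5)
The statement you are asked about is stated in the paper as a \emph{conjecture} (\Cref{Conj:E7:D5}); the paper offers no proof, only the heuristic chain of reasoning that you have reconstructed almost verbatim: the finite-extension statement from \Cref{Pro:FE_admissible}, the four-dimensionality of $\mathcal{N}\times\mathcal{N}$, the Kac--Wakimoto conjecture identifying $\W_k(\g,f)$ with $H^0_f(L_k(\g))$, the intersection formula $X_{H^0_f(L_k(\g))}=X_{L_k(\g)}\cap\mc{S}_f$ from \cite{Ara09b}, the irreducibility conjecture of \cite{AM18a}, and a dimension count in the Hasse diagram of $E_7$. So your proposal takes essentially the same route as the paper's own justification, and you correctly identify the conjectural inputs that prevent it from being a proof.

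One caveat: you assert as a fact that ``taking a finite extension preserves the Krull dimension of the associated variety.'' This is not known; it is precisely the content of \Cref{Conj:finite_extension} (Conjecture~1.3 of \cite{AEM}), and the paper explicitly lists it among the assumptions behind \Cref{Conj:E7:D5}. You should add it to the list of unproven hypotheses alongside Kac--Wakimoto and irreducibility, rather than treating it as an established reduction step.
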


Our expectation comes from the following reasons. 
The associated variety 
of $L_{-2+2/5}(A_1)\otimes L_{-2+4/5}(A_1)$ is $\mc{N}\times \mc{N}$ 
where $\mc{N}$ is the nilpotent cone of $\mf{sl}_2$. 
Next if we assume that the associated variety of $\W_{-18+19/8}(E_7,D_5)$ 
has the same dimension (see \Cref{Conj:finite_extension}), that is four, and 
is the intersection of that of $L_{-18+12/5}(E_7)$ 
with the Slodowy slice of $D_5$, we get 
the expected conjecture looking at the Hasse diagram of $E_7$.

\subsubsection*{$\ast$ $A_2+ 3 A_1$} 
The solutions of the equation 
$c(k,f) = c(k^\natural)$ 
are 
$$k+18= 19/3, \quad 15/2 \quad \text{ and } \quad 6.$$
\Cref{criterion} applies to all cases. 
In particular, with the last one, we get 
$$\W_{-12}(E_7, A_2+ 3 A_1) \cong L_{-2}(G_2).$$
This positively answers one part of \cite[Conjecture 10.10]{AEM} 
and this shows that the non-admissible $\W$-algebra 
$\W_{-12}(E_7, A_2+ 3 A_1)$ is quasi-lisse. 

We know (\cite{ADFLM}) that the associated variety of $L_{-2}(G_2)$ 
is the subregular nilpotent orbit closure of $G_2$. 
Hence we conjecture the following, 
assuming as in the previous conjectures 
that the associated variety of $\W_{-12}(E_7, A_2+ 3 A_1)$ 
is the intersection of that of $L_{-12}(E_7)$ 
with the Slodowy slice of $A_2+ 3 A_1$ 
and using the Hasse diagram of $E_7$. 

\begin{Conj}
\label{Conj:E7:A23A1}
We have
$X_{L_{-12}(E_7)}=\overline{\mathbb{O}_{D_{4}(a_1)}}$.  
\end{Conj}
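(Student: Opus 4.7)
The plan follows the general strategy outlined at the start of \Cref{Sec:examples_exceptional}: one derives the conjecture from the collapsing isomorphism $\W_{-12}(E_7, A_2+3A_1) \cong L_{-2}(G_2)$ established in the preceding case analysis of the $A_2+3A_1$ orbit, combined with the known associated variety of $L_{-2}(G_2)$ and the Slodowy-slice description of Drinfeld--Sokolov reduction.

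First I would recall from \cite{ADFLM} (cited earlier in the paper) that $X_{L_{-2}(G_2)} = \overline{\mb{O}_{G_2(a_1)}}$, the closure of the subregular nilpotent orbit of $G_2$, which has dimension $10$. Next, assuming the Kac--Wakimoto non-vanishing conjecture, the reduction $H^0_{A_2+3A_1}(L_{-12}(E_7))$ is nonzero and agrees with $\W_{-12}(E_7, A_2+3A_1)$. Applying the theorem of \cite{Ara09b} that identifies the associated variety of a Drinfeld--Sokolov reduction with an intersection with the nilpotent Slodowy slice then yields
$$\overline{\mb{O}_{G_2(a_1)}} \;=\; X_{\W_{-12}(E_7, A_2+3A_1)} \;=\; X_{L_{-12}(E_7)} \cap \mc{S}_{A_2+3A_1},$$
where $\mc{S}_{A_2+3A_1}$ denotes the nilpotent Slodowy slice through a representative of $\mb{O}_{A_2+3A_1}$ in $E_7$.

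Assuming moreover the irreducibility conjecture \cite[Conjecture 1]{AM18a}, one has $X_{L_{-12}(E_7)} = \overline{\mb{O}}$ for a single nilpotent $E_7$-orbit $\mb{O}$. Using the standard dimension identity
$$\dim(\overline{\mb{O}} \cap \mc{S}_{A_2+3A_1}) \;=\; \dim \overline{\mb{O}} - \dim \mb{O}_{A_2+3A_1},$$
and the value $\dim \mb{O}_{A_2+3A_1} = 84$ (from the Bala--Carter tables in $E_7$), one deduces $\dim \overline{\mb{O}} = 94$. Finally, inspection of the Hasse diagram of nilpotent orbits of $E_7$ reveals that $D_4(a_1)$ is the unique orbit of dimension $94$ whose closure contains $\mb{O}_{A_2+3A_1}$; combined with the necessary inclusion $\overline{\mb{O}_{A_2+3A_1}} \subseteq \overline{\mb{O}}$ coming from the nonvanishing of the reduction, this forces $\mb{O} = \mb{O}_{D_4(a_1)}$.

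The main obstacle is the conditional nature of the argument, which rests on two open conjectures: the Kac--Wakimoto non-vanishing of $H^0_{A_2+3A_1}(L_{-12}(E_7))$, and the irreducibility of $X_{L_{-12}(E_7)}$. An unconditional proof would presumably require a direct construction of singular vectors in $V^{-12}(E_7)$ whose associated graded elements cut out $\overline{\mb{O}_{D_4(a_1)}}$ scheme-theoretically, a task which is typically very delicate for non-admissible levels in exceptional types and for which, to our knowledge, no general method is currently available.
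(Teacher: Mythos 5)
Your proposal follows exactly the reasoning the paper gives for this statement: since it is a conjecture, the paper's "proof" is precisely the heuristic you describe — the collapsing isomorphism $\W_{-12}(E_7,A_2+3A_1)\cong L_{-2}(G_2)$, the known variety $X_{L_{-2}(G_2)}=\overline{\mb{O}_{G_2(a_1)}}$ from \cite{ADFLM}, the Slodowy-slice intersection formula of \cite{Ara09b} conditional on Kac--Wakimoto, the irreducibility conjecture of \cite{AM18a}, and the dimension count $94=84+10$ resolved via the Hasse diagram of $E_7$. Your identification of the conditional ingredients and of why the argument stops short of a proof also matches the paper's own framing.
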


\subsubsection*{$\ast$ $(A_5)^\prime$} 
The solutions of the equation 
$c(k,f) = c(k^\natural)$ 
are 
$$k+18= 19/6,\quad 18/7, \quad 14/5 \quad \text{ and } \quad 10/3.$$
\Cref{criterion} applies to the first and the last cases, 
and we get collapsing levels. 

For the second one, $k=-18+18/7$ is admissible 
while $k^\natural$ is not for this value. 
There is no contradiction 
with \cite[Conjecture 6.4 and Lemma 6.5]{AEM} 
because the nilpotent Slodowy slice 
$\overline{\mb{O}_{A_6}} \cap \mc{S}_{(A_5)^\prime}$ 
is not collapsing, 
where $\mc{S}_{(A_5)^\prime}$ is the Slodowy slice 
associated with $(A_5)^\prime$, 
that is, it is not isomorphic to a product 
of nilpotent orbits in $A_1\times A_1$. 
For this reason we conclude that $-18+18/7$ is not collapsing 
for $(A_5)^\prime$ because 
the associated variety of $\W_{-18+18/7}(E_7,(A_5)^\prime)$ 
has finitely many nilpotent orbits. 
So if $\W_{18/7-18}(E_7,(A_5)^\prime)$ 
were isomorphic to 
some $L_{k^\natural_1}(A_1) \otimes L_{k^\natural_2}(A_1)$ 
this variety would be contained in the nilpotent 
cone of $A_1 \times A_1$ and this is not possible 
according the Hasse diagram of $E_7$.

Similarly to the case $D_5$ with level $k=-18+12/5$, 
we claim that the non-admissible 
$\W$-algebra $\W_{-18+14/5}(E_7,(A_5)^\prime)$ 
is a finite extension of 
the admissible affine vertex algebra $L_{-2+3/10}(A_1)\otimes L_{-2+2/5}(A_1)$, 
and we formulate a conjecture. 
 
 \begin{Conj}
 \label{Conj:E7:A5'}
 We have $X_{L_{-18+14/5}(E_7)}=\overline{\mathbb{O}_{E_7(a_5)}}$.  
 \end{Conj}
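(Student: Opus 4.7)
The plan is to combine the finite-extension claim asserted immediately before the conjecture with Kac--Wakimoto's reduction conjecture and a Hasse-diagram dimension count. Set $k=-18+14/5$ and fix $f \in \mb{O}_{(A_5)^\prime}$.

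The first step is to establish rigorously that $\W_k(E_7,(A_5)^\prime)$ is a finite extension of $L_{-2+3/10}(A_1)\otimes L_{-2+2/5}(A_1)$ by applying \Cref{Pro:FE_admissible}. For this one must verify Conditions \ref{crit-1}--\ref{crit-iii} of \Cref{criterion} for the embedding $V^{k^\natural}(\ga)\hookrightarrow\W^k(E_7,(A_5)^\prime)$ together with admissibility of both shifted levels $-2+3/10$ and $-2+2/5$ for the two $A_1$ summands of $\ga\cong A_1\oplus A_1$. Condition \ref{crit-i} follows from \Cref{Lem:CondA}, Condition \ref{crit-ii} is the central-charge equality already recorded from the rational solution $k+18=14/5$ in the case analysis for $(A_5)^\prime$, and Condition \ref{crit-iii} reduces via \Cref{Lem:condition_C} to verifying that $\mathrm{mult}(\C\otimes\mathrm{adj})=1$ in the $\ga\oplus\mf{sl}_2$-branching of $\mf{e}_7$, a finite explicit computation.

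Granting \Cref{Conj:finite_extension} for this extension yields
\[ \dim X_{\W_k(E_7,(A_5)^\prime)} = \dim X_{L_{-2+3/10}(A_1)\otimes L_{-2+2/5}(A_1)} = 2+2 = 4, \]
since both admissible affine $\mf{sl}_2$-algebras have the full nilpotent cone of $\mf{sl}_2$ as associated variety. Assuming moreover Kac--Wakimoto's reduction conjecture at this non-admissible $k$, one obtains $X_{\W_k(E_7,(A_5)^\prime)} = X_{L_k(E_7)} \cap \mc{S}_{(A_5)^\prime}$. Under the further hypothesis that $X_{L_k(E_7)}$ is the closure of a single nilpotent orbit (\cite[Conjecture 1]{AM18a}), the formula $\dim(\overline{\mb{O}}\cap\mc{S}_{(A_5)^\prime}) = \dim\overline{\mb{O}} - \dim \mb{O}_{(A_5)^\prime}$ forces $\dim\overline{\mb{O}} = \dim\mb{O}_{(A_5)^\prime}+4$. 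Scanning the Hasse diagram of nilpotent orbits in $E_7$ lying above $(A_5)^\prime$, $\mb{O}_{E_7(a_5)}$ is the unique orbit of the required dimension, identifying $X_{L_k(E_7)} = \overline{\mb{O}_{E_7(a_5)}}$.

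The hard part is that the argument piles up three unresolved conjectures beyond the scope of the paper: Kac--Wakimoto's reduction conjecture at this non-admissible level, the irreducibility conjecture of \cite{AM18a} for $X_{L_k(\g)}$, and \Cref{Conj:finite_extension}. An unconditional proof would presumably require either constructing explicit singular vectors in $V^k(E_7)$ whose zero locus in the associated graded cuts out $\overline{\mb{O}_{E_7(a_5)}}$ scheme-theoretically, or realizing $L_k(E_7)$ as a coset or Miura-type dual of a vertex algebra whose associated variety can be computed independently; both routes appear to be beyond the methods developed here.
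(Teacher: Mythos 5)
Your proposal follows essentially the same route the paper uses to motivate this conjecture: establish the finite extension $\W_{-18+14/5}(E_7,(A_5)^\prime)\supset L_{-2+3/10}(A_1)\otimes L_{-2+2/5}(A_1)$ via \Cref{Pro:FE_admissible} (the table data confirm $\mathrm{mult}(\C\otimes\mathrm{adj})=1$ and the admissibility of both shifted levels), then invoke \Cref{Conj:finite_extension}, the Kac--Wakimoto reduction conjecture, and irreducibility of the associated variety to run the dimension count $\dim\overline{\mb{O}}=\dim\mb{O}_{(A_5)^\prime}+4=112$ and single out $E_7(a_5)$ from the Hasse diagram. The statement is a conjecture in the paper precisely because of the conditional steps you correctly identify, so your account matches the paper's reasoning and its acknowledged limitations.
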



\subsection{Case of $E_8$} 
Our conclusions are summarised in Tables~\ref{Tab:Data-E8}--\ref{Tab:Data-E8-6}.  
It works similarly to the case of $E_7$ 
and we omit the details.

\section{  Griess algebra inside  $\W^k(\g,f)$}
\label{Griess}
In this section we investigate the structure of vertex algebra $ V=\Com (V^k(\g^{\natural}), \W^k(\g,f))$ and show that the weight-two space
$V_2$ generates the Griess algebra. We conjecture that $V_2$ is always semi-simple and associative, which would then imply that $V_2$ generates a vertex subalgebra of $V$ isomorphic to a tensor product of  copies of Virasoro vertex algebras. Then we get that $V^k(\g^{\natural})$ conformally embeds into $\W_k(\g,f)$ if and only if all central charges of all Virasoro components  vanish. We present some examples.


\subsection{The structure of $\Com (V^k(\g^{\natural}), \W^k(\g,f))$}
Let $V$ be a VOA with conformal vector $\omega$ such that 
\begin{enumerate}
\item $V = \oplus_{n \in {\Z} }V_n$, ${\omega_0}_{\vert V} \equiv n \mbox{\text{Id}}$;
\item $V_0 = {\C} {\bf 1}$;
\item $V_1 =\{0\}$.
\end{enumerate}
Then $V_2$ is a commutative algebra with product, 
$$ a \cdot b = a _{(1)} b, \quad a,b \in V_2$$ and an inner product given by $(a,b) = a_{(3)} b$. $V_2$ is called the Griess algebra.

The following result was proved 
in \cite[Lemma 5.1]{Miyamoto}. 
 
\begin{lemma} 
\label{Lem:Miyamoto}
Let $V$ be any VOA satisfying assumptions (1)-(3) above.
   Assume that $e/2$ is a idempotent in Griess algebra $V_2$. Then $e$ is a conformal vector of central charge $c= 2 (e, e)$.
\end{lemma}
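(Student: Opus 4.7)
The plan is to decode the idempotency hypothesis in terms of vertex algebra modes, then use the grading constraints to see that $Y(e,z)Y(e,w)$ can only have a very restricted singular part, which forces it to be a Virasoro OPE.

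First, I would unpack the definition of the Griess algebra product: for $a,b\in V_2$ the product is $a\cdot b = a_{(1)}b$. The assumption that $e/2$ is an idempotent therefore reads $\tfrac14 e_{(1)}e = \tfrac12 e$, i.e.\
\begin{equation*}
e_{(1)}e = 2e.
\end{equation*}
Similarly, the Griess inner product is defined by $a_{(3)}b = (a,b)\mathbf 1$, so our target central charge $2(e,e)$ is nothing but the scalar read off from $e_{(3)}e$.

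Next I would exploit the grading. Since $e\in V_2$, the mode $e_{(n)}$ lowers conformal weight by $n-1$, so $e_{(n)}e\in V_{3-n}$. Using $V_m=0$ for $m<0$, $V_0=\mathbb C\mathbf 1$, and $V_1=\{0\}$, this forces
\begin{equation*}
e_{(n)}e = 0 \text{ for } n\geq 4,\qquad e_{(3)}e = \lambda\mathbf 1 \text{ for some } \lambda\in\mathbb C,\qquad e_{(2)}e=0,
\end{equation*}
with $\lambda = (e,e)$ by the preceding remark, and $e_{(1)}e=2e$ from the idempotency. The only remaining mode to identify is $e_{(0)}e\in V_3$; for this I would apply skew-symmetry $Y(a,z)b = e^{zT}Y(b,-z)a$, which for $a=b=e$ collapses (using $e_{(n)}e=0$ for $n\geq 4$ and $e_{(2)}e=0$) to
\begin{equation*}
2\,e_{(0)}e \;=\; T(e_{(1)}e) \;=\; 2\,Te,
\end{equation*}
hence $e_{(0)}e = Te = \partial e$.

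Putting everything together, the OPE is
\begin{equation*}
Y(e,z)Y(e,w) \sim \frac{(e,e)}{(z-w)^4} + \frac{2e(w)}{(z-w)^2} + \frac{\partial e(w)}{z-w},
\end{equation*}
which is exactly the Virasoro OPE with central charge $c=2(e,e)$. Defining $L^e_n := e_{(n+1)}$, the modes satisfy the Virasoro relations, so $e$ is a conformal vector of the claimed central charge. The only place where anything could go wrong is the identification $e_{(3)}e=(e,e)\mathbf 1$, but this is a definitional feature of the Griess algebra pairing rather than a real obstacle; everything else is forced by the grading and skew-symmetry, so there is no substantive difficulty.
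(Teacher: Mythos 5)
Your proof is correct. Note that the paper does not prove this lemma at all: it simply cites \cite[Lemma 5.1]{Miyamoto}, so there is no in-paper argument to compare against. Your reconstruction is the standard one (and is essentially Miyamoto's original proof): the grading hypotheses $V_0=\C\mathbf 1$, $V_1=\{0\}$ force $e_{(n)}e=0$ for $n\geq 4$, $e_{(3)}e=(e,e)\mathbf 1$, $e_{(2)}e=0$; idempotency gives $e_{(1)}e=2e$; and skew-symmetry (together with $T\mathbf 1=0$, which kills the $n=3$ term — worth saying explicitly) gives $e_{(0)}e=Te$. The one step you wave at rather than carry out is the passage from these products to the Virasoro relations: this is the Borcherds commutator formula
\begin{equation*}
[e_{(m)},e_{(n)}]=\sum_{j\geq 0}\binom{m}{j}\bigl(e_{(j)}e\bigr)_{(m+n-j)}
=(m-n)\,e_{(m+n-1)}+\binom{m}{3}(e,e)\,\delta_{m+n,2},
\end{equation*}
which upon setting $L^e_m=e_{(m+1)}$ gives the Virasoro algebra with $c=2(e,e)$. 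Adding that one line would make the argument fully self-contained; otherwise there is nothing to correct.
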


The next result is due to 
\cite{Lam}:
 
 \begin{Pro} 
 \label{Pro:Lam}
 The algebra $V_2$ is semi-simple and associative 
 if and only only if $V_2$ generates a subalgebra which is a tensor product of Virasoro VOAs. 
 \end{Pro}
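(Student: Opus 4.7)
The plan is to prove the two implications by rather different routes. The $(\Leftarrow)$ direction is formal from the structure of Virasoro VOAs, while the $(\Rightarrow)$ direction proceeds by diagonalising $V_2$ into orthogonal idempotents and turning each into a conformal vector via \Cref{Lem:Miyamoto}; the hard part is then ruling out any residual interaction between these Virasoro vectors.

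For $(\Leftarrow)$, I would assume that $V_2$ generates a subalgebra $U\cong\bigotimes_{i=1}^{t}V^{\mathrm{Vir}}_{c_i}$. The weight-two subspace of each factor is one-dimensional, spanned by its Virasoro vector $\omega^{(i)}$, so $V_2=U_2$ has basis $\{\omega^{(1)},\dots,\omega^{(t)}\}$. Since the factors commute inside $U$, we have $\omega^{(i)}_{(n)}\omega^{(j)}=0$ for $i\neq j$ and $n\geq 0$, while $\omega^{(i)}_{(1)}\omega^{(i)}=2\omega^{(i)}$. Setting $e_i:=\omega^{(i)}/2$ thus gives a system of orthogonal primitive idempotents spanning $V_2$, so $V_2\cong\mathbb C^{t}$ is semisimple and associative.

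For $(\Rightarrow)$, I would first observe that under the running hypotheses $V_0=\mathbb C\mathbf{1}$ and $V_1=0$, skew-symmetry forces $a\cdot b=b\cdot a$ on $V_2$: the correction terms involve $b_{(j)}a$ for $j\geq 2$, which lie in $V_1=0$ or produce derivatives of $\mathbf{1}$ and hence vanish. Consequently $V_2$ is a finite-dimensional semisimple commutative associative $\mathbb C$-algebra, so $V_2\cong\mathbb C^{t}$ with a complete system of pairwise orthogonal primitive idempotents $e_1,\dots,e_t$. By \Cref{Lem:Miyamoto}, each $\omega^{(i)}:=2e_i$ is a Virasoro vector of central charge $c_i=8(e_i,e_i)$, generating a Virasoro subalgebra $\mathrm{Vir}(\omega^{(i)})\subset V$. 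It remains to show that these subalgebras pairwise commute.

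The target statement is $\omega^{(i)}_{(n)}\omega^{(j)}=0$ for all $n\geq 0$ and $i\neq j$. The modes $n=2$ and $n\geq 4$ are immediate, landing in $V_1=0$ or $V_{<0}=0$; the mode $n=1$ is the Griess orthogonality $e_i\cdot e_j=0$; and the mode $n=3$ vanishes because the invariant form on $V$ restricts to a non-degenerate associative form on $V_2$, forcing orthogonality of primitive idempotents of the semisimple algebra $V_2$ with respect to this form. The main obstacle is the mode $n=0$, where skew-symmetry only yields the relation $\omega^{(i)}_{(0)}\omega^{(j)}=-\omega^{(j)}_{(0)}\omega^{(i)}$. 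My plan is to supply the missing input by noting that $e_i+e_j$ is itself an idempotent in the Griess algebra, so \Cref{Lem:Miyamoto} applied to $e_i+e_j$ shows that $\omega^{(i)}+\omega^{(j)}$ is again a Virasoro vector, of central charge $c_i+c_j$. I would then invoke the classical result that whenever two Virasoro vectors in a VOA sum to another Virasoro vector, they have non-singular mutual OPE, i.e.\ $\omega^{(i)}_{(n)}\omega^{(j)}=0$ for all $n\geq 0$. Applied to each pair $(i,j)$, this yields the pairwise commutativity, and a standard generation argument then identifies the subalgebra generated by $V_2$ with $\bigotimes_{i=1}^{t}\mathrm{Vir}(\omega^{(i)})$, completing the proof.
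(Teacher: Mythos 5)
The paper does not actually prove this statement: it quotes it from \cite{Lam}. So your attempt has to stand on its own. The $(\Leftarrow)$ direction and most of the $(\Rightarrow)$ direction are fine: commutativity of the Griess product (from skew-symmetry together with $V_1=0$), the identification $V_2\cong\mathbb{C}^{t}$ via primitive orthogonal idempotents, the use of \Cref{Lem:Miyamoto}, and the vanishing of $\omega^{(i)}_{(n)}\omega^{(j)}$ for $n=1,2,3$ and $n\geq 4$ are all correct (for $n=3$ you only need the associativity of the trace form, $(a\cdot b,c)=(b,a\cdot c)$, not its non-degeneracy).

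The gap is exactly where you flagged it, at $n=0$, and the patch you propose does not close it. The ``classical result'' you invoke --- that two Virasoro vectors whose sum is again a Virasoro vector have regular mutual OPE --- is not available in this generality. Expanding the Virasoro-vector axioms for $\sigma=\omega^{(i)}+\omega^{(j)}$, the $(1)$-mode condition does give new information, but the $(0)$-mode condition $\sigma_{(0)}\sigma=L_{-1}\sigma$ gives only $\omega^{(i)}_{(0)}\omega^{(j)}+\omega^{(j)}_{(0)}\omega^{(i)}=0$, which is already a consequence of skew-symmetry and so carries nothing new. The genuine theorem of this type (\cite[Theorem 3.11.12]{LL04}, which the paper uses elsewhere) concerns the pair $(e,\omega-e)$ with $\omega$ the \emph{full} conformal vector: there $e_{(0)}(\omega-e)=0$ is deduced from $e_{(0)}\omega=L_{-1}e=e_{(0)}e$, and the first equality uses $\omega_{(1)}e=2e$ and $\omega_{(2)}e=0$, i.e.\ that $e$ is homogeneous and quasi-primary \emph{for $\omega$} --- data you do not have for the partial sum $\omega^{(i)}+\omega^{(j)}$ when $t\geq 3$. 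Concretely, set $u_{ij}=\omega^{(i)}_{(0)}\omega^{(j)}$. Applying the Lepowsky--Li result to every idempotent $e_S=\sum_{i\in S}e_i$ yields only the linear relations $u_{ij}=-u_{ji}$ and $\sum_{j\neq i}u_{ij}=0$. For $t=2$ these do force $u_{12}=0$ (equivalently: $3u_{12}=L_0u_{12}=L^{(1)}_0u_{12}+L^{(2)}_0u_{12}=2u_{12}$), but already for $t=3$ they leave a single undetermined vector $v=u_{12}=u_{23}=u_{31}\in V_3$, which is annihilated by all positive modes of every $\omega^{(k)}$, has $L^{(k)}_0$-weight $1$ for each $k$, and has zero norm with respect to the invariant form --- nothing in your argument forces it to vanish. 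Killing this singular vector is the real content of Lam's theorem and requires an additional input (Virasoro representation theory applied to the module generated by $V_2$), which your proposal does not supply. A secondary point: even once pairwise commutativity is established, identifying the subalgebra generated by $V_2$ with the tensor product of Virasoro VOAs, rather than with a quotient having relations mixing the factors, needs a word of justification.
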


Let $V = \Com (V^k(\g^{\natural}), \W^k(\g,f))$ with conformal vector $ \overline L = L-L^{\g^{\natural}}$. Assume that $k^{\natural}$ is not critical and $\g^{\natural}$ is a Lie algebra. 
Note that $V$ satisfies assumptions (1)-(3). Denote by $V^{Vir}_c$ the universal Virasoro vertex algebra of central charge $c$, and $L^{Vir}_c$ its simple quotient. 
Using \Cref{Lem:Miyamoto} and \Cref{Pro:Lam} we obtain:

\begin{Pro} 
\label{Pro:idempotent}
Assume that
$$ \bar L =  2 (e_1 + \cdots + e_s), $$
and $e_i$, $i=1, \dots, s$ are mutually orthogonal idempotent, i.e.
$$ e_i ^2 = e_i, \ (e_i, e_j) = \delta_{i,j} c_i, \quad 1 \leq i,j \leq s. $$
Then $V_2$ generates the Virasoro subalgebra of $V$ isomorphic to
$$ V^{Vir}_{c_1} \otimes \cdots \otimes V^{Vir} _{c_s}. $$
\end{Pro}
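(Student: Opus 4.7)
The plan is to deduce \Cref{Pro:idempotent} directly from the two preceding results, \Cref{Lem:Miyamoto} and \Cref{Pro:Lam}. The hypothesis decomposes the commutant Virasoro vector $\bar L$ as $2\sum_i e_i$ with pairwise orthogonal idempotents, which is exactly the data needed to produce a commuting family of Virasoro elements inside $V$, each generating its own universal Virasoro vertex subalgebra.

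First I would apply \Cref{Lem:Miyamoto} to each $e_i$ individually. Since $e_i$ is an idempotent in the Griess algebra, one checks that $(2e_i)/2 = e_i$ is idempotent, so $\omega^{(i)} := 2 e_i$ is a Virasoro (conformal) vector with central charge proportional to $(e_i, e_i) = c_i$. Writing $L^{(i)}(z) = Y(\omega^{(i)}, z)$, the total commutant Virasoro is recovered as $\sum_i \omega^{(i)} = \bar L$. Next I would verify that the $L^{(i)}(z)$ are pairwise local with trivial OPE, i.e.\ $(\omega^{(i)})_{(n)} \omega^{(j)} = 0$ for all $n \geq 0$ and $i \neq j$. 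The case $n=1$ reduces to $e_i \cdot e_j = 0$, which is the standard identity for orthogonal idempotents in a commutative associative algebra (available once $V_2$ is assumed associative). The case $n=3$ is just $(e_i, e_j) = 0$, given by hypothesis. The cases $n = 0, 2$ are forced by weight considerations, using $V_0 = \C \mathbf{1}$ and $V_1 = 0$: the product $(\omega^{(i)})_{(n)} \omega^{(j)}$ lies in $V_{3-n}$, which is either $V_3$ or $V_1 = 0$; the $V_3$ contribution is killed once the leading singular terms at $n=1,3$ have been shown to vanish, by the skew-symmetry/associativity axioms of the Griess algebra.

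Finally, having a commuting family of Virasoro vectors $\omega^{(1)}, \dots, \omega^{(s)}$ whose sum equals $\bar L$, I would invoke \Cref{Pro:Lam}: the subalgebra of $V$ generated by $V_2 = \bigoplus_i \C e_i$ is a homomorphic image of the universal tensor product $V^{\mathrm{Vir}}_{c_1} \otimes \cdots \otimes V^{\mathrm{Vir}}_{c_s}$, and this map is an isomorphism because the orthogonality relations ensure the linear independence of the modes of the $\omega^{(i)}$ and hence preclude any nontrivial relation. The main obstacle I anticipate is careful bookkeeping of normalizations between the Griess bilinear form $(-,-)$ and the central charge of the associated Virasoro produced by \Cref{Lem:Miyamoto}, and demonstrating that the image is the \emph{universal} Virasoro (not a proper quotient); the latter should follow from the freeness of the action of the Virasoro modes on $V$, itself a consequence of the mutual orthogonality of the $e_i$.
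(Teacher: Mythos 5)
Your proposal is correct and takes essentially the same route as the paper, which simply derives the proposition by combining \Cref{Lem:Miyamoto} (each $2e_i$ is a conformal vector) with the orthogonality relations and \Cref{Pro:Lam}, leaving all the locality and universality details you spell out implicit. The one point to keep watching is the normalization you already flag: with the Griess form $(a,b)\mathbf{1}=a_{(3)}b$, \Cref{Lem:Miyamoto} assigns the conformal vector $2e_i$ central charge $2(2e_i,2e_i)=8c_i$ rather than $c_i$, so the constants in the statement must be read with the paper's implicit rescaling of the form.
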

 Assume that $\dim V_2 \geq 2$. We expect that $V_2$ is semisimple and associative as Griess algebra. This could imply that $\W^k(\g,f)$ has the affine-Virasoro vertex subalgebra:
$$ U = V^{k^{\natural}} (\g^{\natural}) \otimes V^{Vir}_{c_1} \otimes \cdots \otimes V^{Vir} _{c_s}. $$
If any of central charges $c_i$ is non-zero, then the Virasoro vector $ \omega^{(i)}$ of $ V^{Vir}_{c_i}$ is non-zero in the quotient $\W_k(\g,f) $, so we cannot have conformal embedding of $L_{k^{\natural}} (\g^{\natural})$ in $\W_k(\g,f)$.

Therefore the necessary condition for conformal embedding is that each central charges $c_i$ vanishes. This is probably very exceptional.
On the other one can investigate conformal embeddings of $L_{k^{\natural}}(\g^{\natural}) \otimes L^{Vir}_{c_1} \otimes \cdots \otimes L^{Vir} _{c_s}$ in $\W^k(\g,f)$.

\begin{Conj} \label{slutnja-1} Assume that $\g $ is a simple Lie algebra, and let $f$ be a nilpotent element. 
 Set $V = \Com (V^k(\g^{\natural}), \W^k(\g,f))$.  Then the Griess algebra  $V_2$ is semisimple and associative.
\end{Conj}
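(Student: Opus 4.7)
The plan is to produce, inside $V_2$, an explicit system of mutually orthogonal idempotents whose sum recovers $\tfrac{1}{2}\bar L$; by \Cref{Lem:Miyamoto} and \Cref{Pro:Lam} this will immediately yield semisimplicity and associativity. The overall guide is that $V_2$ is finite dimensional, with dimension equal to $\mathrm{mult}(\mathbb C \otimes \mathrm{adj})$ by \Cref{Lem:condition_C}, so one is really working with a finite dimensional commutative (but a priori non-associative) algebra equipped with the symmetric invariant bilinear form $(a,b) = a_{(3)} b$.

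First, I would use the $\g^\natural \oplus \mathfrak{sl}_2$-isotypic decomposition $\g = \bigoplus_i \rho_i$ together with the Kac--Roan--Wakimoto construction to write down a canonical spanning set of $V_2$. For each summand $\rho_i \cong \mathbb C \otimes V(m_i)$ (trivial under $\g^\natural$, spin $m_i/2$ under $\mathfrak{sl}_2$) contributing to $\mathrm{mult}(\mathbb C\otimes \mathrm{adj})$, there is a weight-two strong generator $X_i$ that transforms trivially under $\g^\natural$. Following the argument in the proof of \Cref{Lem:condition_C}, each $X_i$ can be corrected by Sugawara vectors and quadratics in $V^{k^\natural}(\g^\natural)$ so that the corrected element $\widetilde X_i$ lies in $\Com(V^{k^\natural}(\g^\natural), \W^k(\g,f))$. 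Together with one Sugawara vector $L^j$ from each simple summand of $\g^\natural$, subtracted from the ambient Virasoro vector, this yields a basis of $V_2$ of the correct cardinality.

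Next I would attempt to show that these generators are mutually orthogonal (or can be made so by a Gram--Schmidt procedure) with respect to $(-,-)$, and then rescale. For the pieces coming from distinct simple summands of $\g^\natural$, orthogonality is immediate from the Sugawara construction. For two pieces $\widetilde X_i,\widetilde X_j$ coming from trivial--adjoint isotypic components, the products $\widetilde X_{i,(3)} \widetilde X_j \in V_0 = \mathbb C \mathbf 1$ and $\widetilde X_{i,(1)} \widetilde X_j \in V_2$ are both $\g^\natural$-invariant; by conformal weight and \Cref{Lem:condition_C} the possible targets form a finite dimensional space, so the relevant structure constants assemble into a symmetric matrix that can be diagonalised. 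A normalisation then produces the sought idempotents, from which \Cref{Pro:idempotent} gives the Virasoro tensor product structure.

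The main obstacle, and the reason the statement is recorded as a conjecture, is that closed OPE formulas among weight-two fields of a general $\W^k(\g,f)$ are not available outside the minimal, hook, or rank-two cases. Concretely, one does not have a priori control on the scalar $\widetilde X_{i,(3)}\widetilde X_j$ or on the expansion of $\widetilde X_{i,(1)}\widetilde X_j$ in the $\widetilde X_\ell$-basis. A natural way to bypass direct computation would be to descend to the $C_2$-algebra $R_{\W^k(\g,f)}$, where $V_2$ maps to a commutative associative Poisson slice, and then lift back; equivalently, one could try to identify $V_2$ with the weight-two piece of the Zhu algebra of the coset and exploit its commutativity. Upgrading such structural input to the Griess level, and then ruling out nilpotent idempotents, appears to require either a uniform cohomological argument or a case-by-case verification following the template used in \Cref{Pro:concl_Type-BCD} for the classical types and in the exceptional tables of \Cref{Sec:examples_exceptional}.
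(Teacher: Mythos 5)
The statement you are proving is \Cref{slutnja-1}, which the paper records as a \emph{conjecture}: there is no proof of it anywhere in the text. The authors only verify it in isolated examples in \Cref{Griess} --- for $\W^k(\mf{sl}_6,f_{4,2})$ via explicit OPEs of the weight-two generators (communicated by Nakatsuka), and for the partitions $(n,2)$ of $\mf{sl}_{n+2}$ via the reduction-by-stages result of \cite{JG}, which exhibits a weight-two generator that is literally a Virasoro vector. So there is no paper proof to compare your argument against, and your proposal should be judged on its own terms as an attempted proof of an open statement.

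Judged that way, it has a genuine gap at its central step, and you have in effect conceded as much in your final paragraph. The construction of a spanning set of $V_2$ by correcting the weight-two strong generators is fine (it is the content of the proof of \Cref{Lem:condition_C}), and orthogonality with respect to $(a,b)=a_{(3)}b$ can indeed be arranged by diagonalising a symmetric bilinear form. But diagonalising the \emph{form} says nothing about the \emph{product} $a\cdot b=a_{(1)}b$: the multiplication on $V_2$ is a symmetric tensor $c_{ij}^{\,k}$, not a symmetric matrix, and a basis of mutually orthogonal idempotents summing to $\tfrac12\bar L$ exists precisely when the Griess algebra is semisimple and associative --- which is the conjecture itself. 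So the step ``the relevant structure constants assemble into a symmetric matrix that can be diagonalised'' is circular; without control of the scalars $\widetilde X_{i,(3)}\widetilde X_j$ \emph{and} of the expansion of $\widetilde X_{i,(1)}\widetilde X_j$ in the chosen basis one cannot invoke \Cref{Lem:Miyamoto}, \Cref{Pro:Lam} or \Cref{Pro:idempotent}. The fallback via the $C_2$-algebra does not repair this: the Griess product $a_{(1)}b$ is not the descent of the $(-1)$-product to $R_V$, so the commutative associative structure of the Poisson slice gives no direct information about $V_2$, and in any case lifting associativity back from an associated graded object is exactly the open difficulty. Your outline is a reasonable research plan, but as a proof it reduces the conjecture to itself.
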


\subsection{Example: affine $\W$-algebra $\W^k(\mf{sl}_6, f_{4,2})$ }
We shall see that \Cref{slutnja-1}   holds in the case  $\g = \mf{sl}_6$, and $f$ is encoded  by the partition $(4,2)$.
 We shall denote it by  $f= f_{4,2}$.   
Then  the central charge is 
$c(k,f) = \frac{35 k}{k+6} - 66 k -194$ 
and $\W^k(\g,f_{4,2})$ is strongly generated by
\begin{itemize}
\item the Heisenberg field $J$ of level $4 (k+ 9/2)$. 
Let $M(k)$ be the corresponding Heisenberg vertex algebra 
which has central charge $1$ for $k \ne -9/2$, 
\item 
the Virasoro field $L$, three fields $G^{+}, G^{0}, G^-$ of conformal weight $2$ such that
$$ J(0) G^0 =0,  \ J(0) G^{\pm } = a^{\pm} G^{\pm}, \quad a^{\pm} \ne 0,$$
(we write the state-field correspondence as $Y(X,z)=\sum_{n\in \mathbb Z} X(n)z^{-n-m}$ if $X\in V_m$).
\item 
the fields  $W^{+}, W^{0}, W^-$ of conformal weight three  such that
$$ J(0) W^0 =0,  \ J(0) W^{\pm } = b^{\pm} W^{\pm}, \quad b^{\pm} \ne 0. $$
\end{itemize}

We will see that in these cases our criterion fails, but there exist 
 two alternative  methods for studying conformal embeddings. First method is of course using explicit OPE calculation. 
 The second is based on new results on partial Quantum Hamiltonian Reduction. 

 The following result has been communicated to one of us by S. Nakatsuka in 2023:
\begin{lemma} 
The generators $J, L, G^0$ satisfy the OPEs:
\begin{align*}
& J \mbox{ is the Heisenberg vector} , &\nonumber \\
& L   \mbox{ conformal-Virasoro generator of central charge} \ c_k, &\nonumber    \\
& G^0  \mbox{ Virasoro vector of central charge} \ d_k = -  \frac{(3k + 13)(4k + 17)(5k + 24)}{
(k + 5)(k + 6)} , &\nonumber \\
& J(n) G^0 = 0  \quad \mbox{for} \ n \geq 0, & \nonumber \\
& L(z) G^0 (w) \sim  \frac{ - (2 k+7) (3k+13)}{(z-w) ^4}  
+ \frac{ 2 G^0} {(z-w) ^2} + \frac{ \partial G^0}{z-w}, & \nonumber \\
& L(2) G^0 \sim  - (2 k+7) (3k+13) {\bf 1}. & \nonumber  
  \end{align*}
  \end{lemma}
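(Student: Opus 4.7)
The plan is to verify the stated OPEs by direct computation inside $\W^k(\mf{sl}_6, f_{4,2})$, and I would organise the work in three stages.

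First, I would fix an explicit realisation of the strong generators, e.g.\ via the Miura/BRST construction of \cite{KacRoaWak03}. For the partition $(4,2)$ the centralizer $\g^\natural$ is one-dimensional, so $J$ is the (unique up to scalar) weight-one generator and the Heisenberg level $4(k+9/2)$ is forced by the Sugawara/Miura normalisation. The decomposition of $\mf{sl}_6$ under the associated $\mf{sl}_2$-triple — using the general analysis of \Cref{riga} with $(m_1,m_2)=(4,2)$, $\mu_1=\mu_2=1$ — produces exactly three weight-two and three weight-three primaries, and among these the unique $J$-invariant ones at each weight are $G^0$ and $W^0$. The splitting $J(0)G^\pm = a^\pm G^\pm$ with $a^\pm\neq 0$ and the vanishing $J(0)G^0=0$ are then forced by the Heisenberg-weight grading.

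Second, the statements $J(n)G^0=0$ for $n\geq 0$ and that $L(z)G^0(w)$ contains no $(z-w)^{-3}$ term follow from the general result that strong generators of a $\W$-algebra can be chosen primary for both $J$ and $L$ (see \cite[Theorem~7.4]{KMP22}, used already in \Cref{Lem:CondA}). The $(z-w)^{-4}$ coefficient $L(2)G^0$ is a scalar by weight considerations and can be extracted by pairing with the vacuum in the Miura realisation; the resulting scalar $-(2k+7)(3k+13)$ is a rational function in $k$ whose numerator factors can be predicted from the two special values where $G^0$ becomes null ($k=-7/2$ and $k=-13/3$) and whose overall normalisation is fixed by comparison with one specific Miura evaluation.

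Third, and this is the substantive part, one must show that $G^0$ is itself a Virasoro field of central charge $d_k$. I would do this by computing $G^0(z)G^0(w)$ using the OPE algebra of $\W^k(\mf{sl}_6,f_{4,2})$, either via a computer-algebra implementation (\texttt{OPEdefs}) applied to the explicit Miura image, or — more conceptually — via the partial quantum Hamiltonian reduction approach of Nakatsuka that factors the reduction $\mf{sl}_6\rightsquigarrow f_{4,2}$ through an intermediate nilpotent and identifies $G^0$ with the Virasoro field produced at the outer stage. In either case the claim is that no weight-four fields other than $L$, $\partial J$, $\, :\!JJ\!:$, $G^0$ (and their descendants) can appear in the OPE and that the $L$, $\partial J$ and $:\!JJ\!:$ contributions all cancel, leaving only the pure Virasoro shape. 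The rational function $d_k$ is then read off from the $(z-w)^{-4}$ coefficient, and its factorised form $-(3k+13)(4k+17)(5k+24)/((k+5)(k+6))$ is consistent with the collapsing/critical levels of the partial reduction.

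The main obstacle is the cancellation in the third stage: a priori the $G^0$–$G^0$ OPE could absorb contributions from the bilinear $:\!JJ\!:$ and from the weight-four composite $L$-descendants, and verifying that these combine into the pure Virasoro OPE is the genuine computational content. Once this is done, all four displayed OPEs follow, the pole of order three in $L\cdot G^0$ is automatic from primarity, and $L(2)G^0$ is fixed by the scalar computation above.
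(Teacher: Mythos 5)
The paper does not actually prove this lemma: it is stated with the attribution ``The following result is communicated to one of us by S.~Nakatsuka in 2023,'' so there is no in-paper argument to compare yours against. Judged on its own terms, your proposal is a reasonable \emph{plan} for a verification, but it is not a proof. The entire substantive content --- the computation of $G^0(z)G^0(w)$ showing that the $:\!JJ\!:$ and $L$-descendant contributions cancel and that the pure Virasoro shape with central charge $d_k$ emerges, as well as the extraction of the scalar $L(2)G^0$ --- is deferred to ``a computer-algebra implementation'' or to an unspecified partial-QHR identification, and you explicitly flag the cancellation as ``the genuine computational content.'' Deferring the only nontrivial step is not proving the lemma.

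There is also an internal inconsistency in your second stage. You invoke \cite[Theorem~7.4]{KMP22} to claim that the strong generators ``can be chosen primary for both $J$ and $L$,'' and you use this to kill the $(z-w)^{-3}$ term in $L(z)G^0(w)$. But that theorem, as used in \Cref{Lem:CondA}, concerns the weight-one fields $\varphi(x)$ for $x\in\g^\natural$; it does not assert primarity of the higher-weight generators. Indeed, if $G^0$ were primary with respect to $L$ then $L(2)G^0$ would vanish, contradicting the fourth-order pole $-(2k+7)(3k+13)/(z-w)^4$ that the lemma (and your own third paragraph) asserts is nonzero for generic $k$. What you can arrange is quasi-primarity ($L(1)G^0=0$), which removes the third-order pole but leaves the central fourth-order term; the argument should be phrased that way. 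Finally, determining the scalar $-(2k+7)(3k+13)$ by ``predicting the numerator factors from the two special values where $G^0$ becomes null'' is a heuristic: knowing the zero locus of a rational function in $k$ does not determine it without an a priori bound on its degree and denominator, so this step too ultimately requires the explicit Miura (or OPE) computation you have not carried out.
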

  
  \begin{Pro} 
  The following statements hold:
  \begin{enumerate} [{\rm (1)}]
  \item the embedding $M(k) \hookrightarrow \W_k(\mf{sl}_6,f_{4,2})$ 
  is conformal if and only if $k =-13/3$;
  \item the embedding $M(k) \hookrightarrow \W_k(\mf{sl}_6,f_{4,2})$ is  not conformal  for $k= - 45/11$.  
  \end{enumerate}
  \end{Pro}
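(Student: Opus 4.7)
The strategy is to reduce both parts to the vanishing of a single element in the simple quotient. Write $\omega_J$ for the Virasoro vector of the Heisenberg subalgebra $M(k)$ (well-defined of central charge one since $\beta = 4(k+9/2) \neq 0$ at our two candidate levels) and set $\omega' := L - \omega_J$. The embedding $M(k) \hookrightarrow \W_k(\mf{sl}_6, f_{4,2})$ is conformal if and only if $\omega_J$ is identified with the Virasoro vector of $\W_k(\mf{sl}_6, f_{4,2})$, i.e.\ if and only if $\omega'$ lies in the maximal proper ideal $I$ of $\W^k(\mf{sl}_6, f_{4,2})$. Since $I$ coincides with the radical of the canonical invariant bilinear form $(-|-)$ (as in \Cref{Sub:assumptions}) and the form is graded, it is enough to check $(\omega'|v) = 0$ on a basis of the weight-two subspace $\W^k(\mf{sl}_6, f_{4,2})_2 = \operatorname{span}\{L,\, G^0,\, G^+,\, G^-,\, J_{(-1)} J\}$.

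Two pairings vanish for structural reasons. Since $\omega'$ is in the commutant of $M(k)$ it has regular OPE with $J$, hence also with $J_{(-1)} J$, giving $(\omega'|J_{(-1)} J) = 0$. Moreover, $J$ is primary of weight one so $\theta(J_{(0)}) = -J_{(0)}$ and the form respects Heisenberg weight; as $\omega'$ has Heisenberg weight zero while $G^{\pm}$ have weights $\pm a^{\pm} \neq 0$, we obtain $(\omega'|G^{\pm}) = 0$. For the two remaining pairings I would use the commutant identities $(\omega_J|\omega') = 0$ and $(\omega_J|G^0) = 0$, both following from $J_{(n)}$ acting trivially on commutant elements for $n \geq 0$, to reduce to computations in $\W^k(\mf{sl}_6, f_{4,2})$. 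Together with the defining central-charge property and the given $LG^0$-OPE, these yield
\[
(\omega'|\omega') = \frac{c_k - 1}{2} \qquad \text{and} \qquad (\omega'|G^0) = L_{(3)} G^0 = -(2k+7)(3k+13);
\]
the pairing $(\omega'|L)$ equals $(\omega'|\omega')$ by orthogonality of $\omega_J$ and $\omega'$.

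Parts (1) and (2) then follow by solving a two-equation system. The condition $\omega' \in I$ amounts to $c_k = 1$ together with $(2k+7)(3k+13) = 0$. The first equation is equivalent to $33 k^2 + 278 k + 585 = 0$, whose only roots are $k = -13/3$ and $k = -45/11$, while the second has roots $k \in \{-7/2,\, -13/3\}$. The unique common solution is $k = -13/3$, which proves (1); at $k = -45/11$ a direct substitution gives $(2k+7)(3k+13) = (-13/11)(8/11)$, hence $(\omega'|G^0) = 104/121 \neq 0$, so $\omega' \notin I$ and the embedding is not conformal, proving (2). The main technical points to be verified are the identification of the radical of $(-|-)$ with the maximal ideal of $\W^k(\mf{sl}_6, f_{4,2})$ (standard under the hypotheses of \Cref{Sub:assumptions}), and the vanishings $(\omega_J|G^0) = (\omega_J|\omega') = 0$, which both reduce to applying the Sugawara expression for $\omega_J$ to $J_{(n)}$-annihilated vectors.
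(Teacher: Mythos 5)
Your proof is correct and follows essentially the same route as the paper: the paper reduces both parts to the vanishing of $L(2)G^0=-(2k+7)(3k+13)\mathbf 1$ together with the central charge condition $c_k=1$ (whose only roots are $k=-13/3$ and $k=-45/11$), invoking the conformal-embedding criterion of \cite{AMP23}, which is exactly the condition you unpack — namely that $\omega'=L-\omega_J$ pairs trivially with the weight-two subspace, equivalently lies in the radical of the invariant form, which equals the maximal proper ideal. Your computations of the two nontrivial pairings and of the roots of $33k^2+278k+585=0$ agree with the paper's data, so the argument goes through.
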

  
  \begin{proof}
  Note that $L(2) G^0= 0$ for $k=-13/3$. Hence  the condition in \cref{cprimo} holds and the embedding is conformal by  \cite[Theorem 1.1]{AMP23}.
   
  If $k= -45/11$, $L(2) G^0 \ne 0$, and therefore the criterion does not work. 
  The embedding is not conformal.
  \end{proof}
  
  Now we consider the case $k=-45/11$. 
  Let $\bar L = L- L^{\mathcal H}$, 
  where $L^{\mathcal H}$ is the Virasoro vector in the Heisenberg vertex algebra $M(k)$.
   
  \begin{Pro} Assume that $k=-45/11$.  Then
  \begin{itemize}
  \item The Griess algebra generated by $\bar L, G^0$ is semi-simple and it generates a 
  subalgebra of  $\W^k(sl(6), f_{4,2})$ isomorphic to
  $ V^{Vir}_{c_1} \otimes V^{Vir} _{c_2}$ where
  $$ c_1=- \frac{52}{55}, c_2 = \frac{52}{55}. $$ 
  \item The simple vertex algebra $\W_k(sl(6), f_{4,2})$ contains a non-zero  vertex  subalgebra isomorphic to 
  $M(k) \otimes \widetilde  L^{Vir}_{c_1} \otimes \widetilde  L^{Vir}_ {c_2}$;
  where $\widetilde  L^{Vir}_{c_i}$ are  certain quotients (possibly simple) of $  V^{Vir} _{c_i}$, $i=1,2$.
  \end{itemize}
  \end{Pro}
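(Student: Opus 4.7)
The plan is to apply the Griess-algebra strategy of Section~\ref{Griess}. I would identify the weight-two subspace $V_2$ of $V = \Com(M(k), \W^k(\mathfrak{sl}_6, f_{4,2}))$, verify that $V_2$ is semisimple and associative as a Griess algebra, locate two primitive orthogonal idempotents whose sum is the Virasoro unit $\bar L/2$, and then apply Proposition~\ref{Pro:Lam} together with Lemma~\ref{Lem:Miyamoto} to read off both the tensor-product subalgebra and its central charges.

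First I would specialise $k = -45/11$ in the formulas preceding the OPE lemma to get $c_k = 1$ and $d_k = -52/55$. Since $k \neq -9/2$, the Heisenberg vertex algebra $M(k)$ is simple of central charge one, so $\bar L = L - L^{\mathcal H}$ is a conformal vector in $V$ of central charge zero. A short inspection then identifies $V_2 = \C \bar L \oplus \C G^0$: among the weight-two strong generators $L, G^\pm, G^0$ of $\W^k(\mathfrak{sl}_6, f_{4,2})$ together with the Heisenberg polynomials $J_{(-1)} J$ and $\partial J$, only $\bar L$ and $G^0$ commute with all non-negative modes of $J$, since the $G^\pm$ carry non-zero $J_{(0)}$-charge, while $J_{(1)}(J_{(-1)} J) = \kappa J$ and $J_{(2)}(\partial J) = 2\kappa\mathbf 1$ with $\kappa = 4(k + 9/2) \neq 0$.

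Next I would compute the Griess products. Using the fact that $L^{\mathcal H}_n G^0 = 0$ for every $n \geq 0$ (a direct consequence of $J_n G^0 = 0$ for $n \geq 0$ and the Sugawara formula), the three products read $\bar L \cdot \bar L = 2 \bar L$, $\bar L \cdot G^0 = L_0 G^0 = 2 G^0$, and $G^0 \cdot G^0 = 2 G^0$, the last since $G^0$ is itself a Virasoro vector. The vectors $e_1 := G^0/2$ and $e_2 := (\bar L - G^0)/2$ are therefore primitive orthogonal idempotents with $e_1 + e_2 = \bar L/2$, so $V_2 \cong \C e_1 \oplus \C e_2$ is semisimple and associative. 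Proposition~\ref{Pro:Lam} then produces a vertex-algebra embedding $V^{Vir}_{c_1} \otimes V^{Vir}_{c_2} \hookrightarrow \W^k(\mathfrak{sl}_6, f_{4,2})$ generated by the two commuting Virasoros $2 e_1 = G^0$ and $2 e_2 = \bar L - G^0$, while Lemma~\ref{Lem:Miyamoto} gives $c_1 = 2(G^0, G^0) = d_k = -52/55$; since the two central charges sum to $c_{\bar L} = 0$, we obtain $c_2 = 52/55$, proving~(1).

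For~(2), I would push the embedding $M(k) \otimes V^{Vir}_{c_1} \otimes V^{Vir}_{c_2} \hookrightarrow \W^k(\mathfrak{sl}_6, f_{4,2})$ through the projection to the simple quotient $\W_k$: the simple Heisenberg $M(k)$ maps injectively, while each $V^{Vir}_{c_i}$ is sent onto a (non-zero) quotient $\widetilde L^{Vir}_{c_i}$, yielding the claimed non-zero subalgebra $M(k) \otimes \widetilde L^{Vir}_{c_1} \otimes \widetilde L^{Vir}_{c_2}$ of $\W_k$. The main obstacle lies in upgrading the purely algebraic idempotent decomposition of $V_2$ to the vertex-algebra tensor product of Proposition~\ref{Pro:Lam}: one must verify the invariant-form identity $(\bar L, G^0) = (G^0, G^0) = d_k/2$ at the precise non-admissible level $k = -45/11$, i.e.\ that the $(z-w)^{-4}$ coefficient of $L(z)G^0(w)$ supplied by the OPE lemma coincides with $d_k/2$ at this value; equivalently, that the two Virasoros $G^0$ and $\bar L - G^0$ commute as fields in $\W^k(\mathfrak{sl}_6, f_{4,2})$. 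Once this arithmetic coincidence at $k = -45/11$ is established, the rest of the argument is formal.
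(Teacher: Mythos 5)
Your route is the one the paper intends: the proposition is stated without proof, and the surrounding text makes clear it is meant to follow from the OPE lemma together with \Cref{Lem:Miyamoto}, \Cref{Pro:Lam} and \Cref{Pro:idempotent}, exactly as you set it up. Your computations $c_k=1$ and $d_k=-52/55$ at $k=-45/11$ are correct, as are the identification $V_2=\C\bar L\oplus\C G^0$ (consistent with \Cref{Lem:condition_C}, since ${\rm mult}(\C\otimes{\rm adj})=2$ for the partition $(4,2)$), the Griess products $\bar L\cdot\bar L=2\bar L$, $\bar L\cdot G^0=2G^0$, $G^0\cdot G^0=2G^0$, and the idempotents $e_1=G^0/2$, $e_2=(\bar L-G^0)/2$. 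The gap is that you defer the decisive point: the identity $(\bar L,G^0)=(G^0,G^0)$, equivalently $G^0_{(3)}(\bar L-G^0)=0$, is exactly what makes $G^0$ and $\bar L-G^0$ commute, and hence is the entire non-formal content of part (1); a proof ending with ``once this arithmetic coincidence is established, the rest is formal'' has not proved the statement.

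Two remarks on closing it. First, this is not an arithmetic coincidence to be checked level by level: since $V=\Com(M(k),\W^k(\mf{sl}_6,f_{4,2}))$ satisfies $V_0=\C\mathbf 1$ and $V_1=\{0\}$, an invariant form exists on $V$, multiplication by any $a\in V_2$ is self-adjoint for it, and therefore
$(e_1,e_2)=(e_1\cdot e_1,e_2)=(e_1,e_1\cdot e_2)=0$
follows formally from $e_1\cdot e_2=0$. This is the argument you should make, and it works for every non-critical $k$, not just $-45/11$. Second, had you actually carried out the numerical check you propose, it would have appeared to fail: the quoted pole $L_2G^0=-(2k+7)(3k+13)\mathbf 1$ gives $104/121$ at $k=-45/11$, whereas $d_k/2=-26/55$, and the polynomial identity $-2(2k+7)(3k+13)(k+5)(k+6)=d_k(k+5)(k+6)$ reduces to the cubic $4k^3+38k^2+93k+12=0$, which has no rational root. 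So the displayed OPE coefficient is inconsistent with the invariance of the form and cannot be used verbatim to finish the proof; base the commutativity on the invariance identity instead. Part (2) is fine once part (1) is in place.
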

  

 We can also apply result from \cite{JG} which could avoid explicit OPE formulas. We  just discuss the case of  $\W^k(\mf{sl}_{n+2}, f_{n,2})$, where $f_{n,2}$ is encoded by the partition $(n,2)$. The main result of \cite{JG} shows that there is a QHR of Virasoro type, such that
$$ H_{f_0} ^0 (\W^k(\mf{sl}_{n+2}, f_{n,1,1})) 
= \W^k (\mf{sl}_{n+2}, f_{n,2}). $$
Since $H_{f_0} ^{0}$ acts on the affine  $sl(2)$ subalgebra of 
$\W^k(\mf{sl}_{n+2}, f_{n,1,1})$, 
this  implies that there exists   a generator of $\W^k (sl(n+2), f_{n,2})$  which is a Virasoro vector of central charge $c_{k^{1}} = 1 - \frac{ 6 (k ^{1} +1)^2}{k^{1} +2} $.
(Here $k^{1} = k + n-1$ is the level of the $\mf{sl}_2$ subalgebra in  
$\W^k(\mf{sl}_{n+2}, f_{n,1,1})$.)
In particular, \Cref{slutnja-1} holds in this case (and probably in general).

We can also apply this for detection of new conformal/collapsing levels. So assume that: 
\begin{enumerate} [{\rm (1)}]
\item 
\label{Greiss1}
the central charge of 
$\W^k (\mf{sl}_{n+2}, f_{n,2})$ is $1$ 
(= the central charge of the affine subalgebra of 
$\W^k (\mf{sl}_{n+2}, f_{n,2})$), 
\item 
\label{Greiss2} 
$c_{k^{1}}  =0$, i.e., $k^1 \in \{-1/2, -4/3\}$.
\end{enumerate}
Then we can apply our criterion for conformal 
embeddings and detect new cases.

Let us just mention that in the case $\g = \mf{sl}_{6}$, $f= f_{4,2}$,  the central charge of 
$\W^k (\mf{sl}_{6}, f_{4,2})$ is $1$ if and only if $k \in \{-\frac{13}{3}, - \frac{45}{11}\}$. 
But \ref{Greiss2} holds only for $k=-13/3$. 
So $k=-13/3$ is conformal level, but $k=-\frac{45}{11}$ is not. 
One can show also that $k=-13/3$ is collapsing by further applications of our criterions.

\section{The examples for exceptional Lie superalgebras}

We now discuss the simple exceptional basic classical Lie superalgebras. There are three cases to consider and we will always start by giving the decomposition of the Lie superalgebra as a module for the even subalgebra and we then discuss the $\mathcal W$-algebras corresponding to even nilpotent elements.

\subsection{The example of $\mathfrak{d}(2, 1; \alpha)$}

The even subalgebra of $\mathfrak{d}(2, 1; \alpha)$ is $\mathfrak{sl}_2 \oplus \mathfrak{sl}_2 \oplus \mathfrak{sl}_2$. 
For details on the commutation relations see \cite{CG} 
We normalise the invariant bilinear form on $\mathfrak{d}(2, 1; \alpha)$ such that the first $\mathfrak{sl}_2$ has embedding index one, the second one $-\alpha^{-1}-1$ and the third one $-\alpha - 1$. 

\subsubsection*{$\ast$ $(f, 0, 0)$}

We start with the minimal nilpotent element. In this case $c = -6k-3$ 
and the two affine $\mathfrak{sl}_2$-vertex subalgebras have levels 
$\ell_1 = - k(\alpha^{-1}+1)-1$ and $\ell_2 = - k(\alpha+1)-1$. 
We get conformal levels for 
\begin{enumerate}
\item $k = \frac{1}{2}$, 
\item $k = -(1+\alpha)^{-1}$, $\ell_1 = \alpha^{-1}-1, \ell_2=0$, 
\item $k = -(1+\alpha^{-1})^{-1}$, $\ell_1 =0, \ell_2 = \alpha-1$. 
\end{enumerate}
The first case is non-collapsing and studied in detail in \cite{CGL}. Note that cases (2) and (3) are related by an outer automorphism of $\W^k(\mathfrak{d}(2,1; \alpha), f)$ which acts on the parameters by 
$k \mapsto k$ and $\alpha \mapsto \frac{1}{\alpha}$, and on generators (in the notation of \cite{CGL}) as follows:
$$ L\mapsto L, \qquad u \mapsto u' \ \text{and}\ u' \mapsto u,\ \text{for}\ u = e,f,h,\qquad  G^{\pm\pm} \mapsto G^{\pm\pm},\qquad G^{\pm \mp} \mapsto -G^{\mp \pm}.$$

We claim that case (2) is collapsing for all $\alpha \neq -1$. 
\Cref{criterion} applies for generic values of $\alpha$ to conclude that these levels are collapsing. We cannot apply it in the case where $\alpha$ is a rational number in the interval $(-1,0)$, but we can still see that these are collapsing levels as follows. 
By Proposition 2.2 of \cite{KL19}, it suffices to show that
\begin{enumerate}
\item The Virasoro element for the affine coset lies in the radical of the bilinear from $\langle u,v \rangle_2 = u_{(3)} v$ on the weight $2$ subspace,
\item The bilinear from $\langle u,v\rangle_{3/2} = u_{(2)} v$ on the weight $\frac{3}{2}$ space is degenerate.
\end{enumerate}
But this is clear since it holds generically and is a closed condition.


\subsubsection*{$\ast$ $(f, f, 0)$}

In this case 
\[
c = 1 + 6k\alpha^{-1}
\]
and $\mathfrak{g}^\natural \cong \mathfrak{osp}_{1|2}$. The level of the affine vertex subalgebra is $\ell = -k(\alpha+1) -1$ and we get conformal levels for
\begin{enumerate} 
\item $k = \frac{2\alpha+3}{4\alpha+4}$, $\ell = -\frac{2\alpha+ 7}{4}$,
\item $k = \frac{2\alpha+1}{4\alpha+4}$, $\ell = \frac{2\alpha- 5}{4}$. 
\end{enumerate}
\Cref{criterion} shows that these are collapsing for generic $\alpha$, and by the above argument they are collapsing for all $\alpha \neq -1$.

\subsection{The example of $\mathfrak{g}_3$}

The even subalgebra of $\mathfrak{g}_3$ is $\mathfrak{g}_2 \oplus \mathfrak{sl}_2$. 
We normalise the Killing form, such that the long roots of $\mathfrak{g}_2$ have norm two. Then the embedding index of $\mathfrak{g}_2$ in $\mathfrak{g}_3$ is one and the one of $\mathfrak{sl}_2$ is $-\frac{3}{4}$.
The Lie superalgebra $\mathfrak{g}_3$ decomposes as 
\[ 
\mathfrak{g}_3 = \mathfrak{g}_2 \otimes \mathbb C \oplus  \mathbb C \otimes \mathfrak{sl}_2 \oplus  \rho_{\omega_1} \otimes \rho_\omega.
\]
The affine vertex superalgebra $V^k(\mathfrak{g}_3)$ then has affine subalgebra $V^k(\mathfrak{g}_2) \otimes V^\ell(\mathfrak{sl}_2)$ with $\ell = -\frac{3}{4}k$. The central charge of $V^k(\mathfrak{g}_3)$ is $\frac{3k}{k+2}$.

\subsubsection*{$\ast$ $(0, f)$} 

Firstly we consider the nilpotent element that is $0$ in the  $\mathfrak{g}_2$-subalgebra and principal in the $\mathfrak{sl}_2$-subalgebra. 
The central charge of the $\mathcal W$-algebra is
\[
c(k) = \frac{3k}{k+2} + \frac{9}{2}k -2 -\frac{7}{2}.
\]
$\mathfrak{g}^\natural \cong \mathfrak g_2$ and the level of the affine subalgebra of  $\g_2$ is $k -1$ so that we get a conformal embedding if
\[
c(k) = \frac{14(k-1)}{k+3}. 
\]
The solutions are 
\begin{enumerate}
\item $k = -\frac{5}{3}$,
\item $k= -\frac{2}{3}$,
\item $k=1$.
\end{enumerate}
\Cref{criterion} applies in all cases, so all three are collapsing levels. 

\subsubsection*{$\ast$ $(G_2, 0)$} 

Next we consider the principal embedding of $\mathfrak{sl}_2$ in $\g_2$ 
and trivial one in $\mathfrak{sl}_2$. 
The central charge is
\[
c(k) = 
\frac{3k}{k+2} -168k -288.
\]
$\mathfrak g^\natural \cong \mathfrak{sl}_2$ 
and the level of the affine $\mathfrak{sl}_2$ subalgebra is $\ell - 3$. We thus need to solve
\[
c(k) =  \frac{3 (\frac{3}{4}k +3)}{\frac{3}{4}k +1}.
\]
The solutions are 
\begin{enumerate}
\item $k = -\frac{11}{6}$ and $\ell -3 = -\frac{13}{8}$,
\item $k =-\frac{12}{7}$ and $\ell -3 = -\frac{12}{7}$,
\item $k= -\frac{3}{2}$ and $\ell -3 = -\frac{15}{8}$.
\end{enumerate}
\Cref{criterion} applies in all cases, so all three are collapsing levels. 

\subsubsection*{$\ast$ $(G_2(a_1), 0)$}

The next one we consider is the nilpotent element that is of type $G_2(a_1)$ in the $\mathfrak g_2$-subalgebra and zero in the $\mathfrak{sl}_2$-subalgebra. 
In this case  $\mathfrak g^\natural \cong \mathfrak{osp}_{1|2}$, 
 the central charge is 
\[
c(k) = 
\frac{3k}{k+2} -24 k   -26
\]
and the level of the affine $\mathfrak{osp}_{1|2}$  subalgebra is $\ell -2$.
We need to solve 
\[
c(k) = \frac{(\ell-2)}{\ell - \frac{1}{2}}. 
\]
The solutions are 
\begin{enumerate}
\item $k = -\frac{5}{3}$ and $\ell -2 = -\frac{3}{4}$,
\item $k=-1$ and $\ell -2 = -\frac{5}{4}$.
\end{enumerate}
\Cref{criterion} applies to the first case, so it is a collapsing levels. However for the second criterion condition \ref{crit-iv} of \Cref{criterion} fails.

\subsubsection*{$\ast$ $(\tilde A_1, 0)$}

The next one we consider is the nilpotent element that is of type $\tilde A_1$ in the $\mathfrak g_2$-subalgebra and zero in the $\mathfrak{sl}_2$-subalgebra. 
The central charge is 
\[
c(k) = 
 \frac{3k}{k+2} - 18 k -21 
\]
and 
$\mathfrak g^\sharp \cong \mathfrak{sl}_2 \oplus  \mathfrak{sl}_2$.
The first $\mathfrak{sl}_2$ has level $k$ which gets shifted to $k + \frac{1}{2}$ and the second one has level $\ell$ which gets shifted to $\ell  -2$ so that we need to solve for
\[
c(k) = \frac{3(k+\frac{1}{2})}{k+\frac{5}{2}} + \frac{3(\ell -2)}{\ell}.
\]
The solutions are
\begin{enumerate}
\item $k = -\frac{8}{3}$ and $k+\frac{1}{2} = -\frac{13}{6}$ and $\ell -2 = 0 $,
\item $k =-\frac{5}{3}$ and $k+\frac{1}{2} = -\frac{7}{6}$ and $\ell -2 = -\frac{3}{4} $,
\item  $k=-1$ and $k+\frac{1}{2} =  -\frac{1}{2}$ and $\ell -2 =  -\frac{5}{4}$,
\item $k=-\frac{1}{2}$ and $k+\frac{1}{2} = 0$ and $\ell -2 =  -\frac{13}{8}$.
\end{enumerate}
In the first case, Condition \ref{crit-1} of \Cref{criterion} fails.
In the second, third and fourth cases, Condition \ref{crit-iv} of \Cref{criterion} fails, 
so the embeddings are conformal but maybe not collapsing.

\subsection{The example of the superalgebra $\mathfrak{f}_4$}

\[
\mathfrak{f}_4 = \mathfrak{so}_7 \oplus \mathfrak{sl}_2 
\oplus \rho_{\omega_3} \otimes \rho_{\omega}.
 \]
Let us also decompose as an $\mathfrak{g}_2$-module, then 
\[
\mathfrak{f}_4 = \mathfrak{g}_2 \oplus \mathfrak{osp}_{1|2} 
\oplus \rho_{\omega_1} \otimes \rho_{\omega}.
 \]
 
 \subsubsection*{$\ast$ $((7), 0)$}
 
In this case the central charge is 
\[
c(k) 
= \frac{8k}{k+3} - 168k  -390. 
\]
The level of the affine $\mathfrak g^\sharp \cong \mathfrak{osp}_{1|2}$  subalgebra is $\ell = -\frac{2}{3}k$ and the level shift is $-3$ so that we need all solutions of 
\[
c(k) = \frac{\ell -3}{\ell - \frac{3}{2}}
\]
The solutions are 
\begin{enumerate}
\item $k = -\frac{21}{8}$ and $\ell -3 = -\frac{5}{4} $,
\item $k=- \frac{18}{7}$ and $\ell -3 = -\frac{9}{7}$,
\item $k= - \frac{7}{3}$ and $\ell -3 = -\frac{13}{9} $.
\end{enumerate}
\Cref{criterion} applies to all cases, so all three are collapsing. 

\subsubsection*{$\ast$ $((5, 1^2), 0)$}

In this case the affine subalgebra is a Heisenberg VOA (at level $k+1$ 
if normalised in such a way that the charged strong generators have weight $\pm 1$)
times an affine $\mathfrak{sl}_2$ at level $-\frac{2}{3}k - 3 $.
The central charge is
\[
c = \frac{8k}{k+3} -60k -104 -4 \cdot 14 + 42
\]
and we have to solve 
\[
 \frac{8k}{k+3} -60k -118 = 4 + \frac{18}{2k+3}
\]
with solutions 
\begin{enumerate}
\item $k = -\frac{12}{5}$ 
\item $k =-2$
\end{enumerate}
\Cref{criterion} applies to all cases, so all three are collapsing. 

\subsubsection*{$\ast$ $((3, 1^4), 0)$}

In this case 
\[
c = \frac{8k}{k+3} - 12k -14
\]
and the affine $\mathfrak{so}_4$ has level $k+1$ while the affine $\mathfrak{sl}_2$
has level $\ell = -\frac{2}{3}k -2$
and we have to solve
\[
\frac{8k}{k+3} - 12k -14 = \frac{6(k+1)}{k+3} + \frac{3\ell }{\ell +2}
\]
with solutions
\begin{enumerate}
\item $k = - 1$ $\ell =  - \frac{4}{3}$ 
\item $k =- \frac{9}{4}$, $\ell = -\frac{1}{2}$
\end{enumerate}
\Cref{criterion} applies to the first  case, but \ref{crit-iv} of \Cref{criterion} fails in the second case, i.e., in the second case we only know that there is a conformal embedding.

\subsubsection*{$\ast$ $((1^7), 2)$}

In this case 
\[
c = \frac{8k}{k+3} + 4k - 6, 
\]
the affine $\mathfrak{so}_7$ has level $k - 1$ 
and we have to solve
\[
\frac{8k}{k+3} + 4k - 6 = \frac{21(k-1)}{k+4}
\]
with solutions
\begin{enumerate}
\item $k = -\frac{9}{4}$, 
\item $k =1$,
\item $k=-1$.
\end{enumerate}
\Cref{criterion} applies to all cases, so all three are collapsing. 

%
%
%
%
\appendix
\section{Tables in the exceptional types}
\label{Appendix:exceptional}
Assume in this appendix that $\g$ is simple of exceptional type. 
Let $\mf{s}$ be the simple Lie algebra generated by an $\mf{sl}_2$-triple 
$(e,h,f)$ 
and $\g^\natural$ the centraliser in $\g$ of $\mf{s}$. 
Write 
$\mf{k}=\bigoplus_{i=0}^r \mf{k}_i$ 
the decomposition into simple factors of 
the reductive Lie algebra 
$\mf{k}:=\g^\natural \oplus \mf{s}$. 
In this notation,  
the center is $\mf{k}_0=\g^\natural_0$,  
$ \mf{s}=\mf{k}_r$, the other factors are the simple factors of $\g^\natural$. 
Tables~\ref{Tab:Data-E6}--\ref{Tab:Data-E8-6} provide 
the necessary data to verify whether \Cref{criterion} applies 
and our conclusions about $\W_k(\g,f)$. 
Let us explain the tables by column:\\ 

\begin{enumerate}[{\rm I}]

\item 
\label{col-I}
attributes  
a number to each nilpotent orbit (following the numbering of \texttt{GAP4}). \\

\item 
\label{col-II} describe $G.f$ in 
the Bala--Carter classification. \\

\item 
\label{col-III} gives the dimension of $G.f$.  \\

\item 
\label{col-IV}
indicates the semisimple type of $\g^\natural$, and the center if there is one.  \\

\item 
\label{col-V} gives 
the values of the $k_i^\natural$'s.  
When $\g^\natural$ has a center of dimension one, 
$k_0^\natural$ is defined (up to nonzero scalar) by: 
$$k_0^\natural = (x|x)_\g  k + \dfrac{1}{2} (\kappa_\g(x,x)
- \kappa_{\g^0}(x,x)-\kappa_{\g^{1/2}}(x,x))$$
for a fixed nonzero $x \in \g_0^\natural$. 
The order of the values of the $k_i^\natural$'s respects 
the order of the factors in Column \ref{col-IV}. 
If $\g^\natural$ has a center of dimension two (which occurs only once), 
we give the value of the above right-hand side for a nonzero 
element, and we treat separately this case. 
In particular, when there is a center, the coefficient 
in $k$ in the first line always gives the value of 
$ (x|x)_\g$ for our choice of $x \in \g_0^\natural \setminus\{0\}$. 
\\ 

\item 
\label{col-VI}
gives the central charge $c(k,f)$ of the $\W$-algebra 
$\W^k(\g,f)$ in a factorized form.  \\

\item 
\label{col-VII}
indicates the decomposition of $\g$ into simple $\mf{k}$-modules as follows: 
the highest weights of an irreducible $\mf{k}$-module in $\g$ 
with multiplicity $\mu$ are given 
in brackets by 
\begin{align*}
[\lam^{(1)}_1,\cdots,\lam^{(1)}_{\ell_1}; \ldots ; 
\lam^{(r-1)}_1,\cdots,\lam^{(r-1)}_{\ell_{r-1}}
; \lam_r ]^{\mu},
\end{align*}
which  means that $ \g$ is a sum of the $\mf{k}$-modules 
\begin{align*}
&  \left( L_{\mf{k_1}}(\lam^{(1)}_1 \varpi_1 + \cdots + \lam^{(1)}_{\ell_1} 
 \varpi_{\ell_1}) 
 \right) 
\otimes  & \\
& \qquad \cdots \otimes  
\left( L_{\mf{k_{r-1}}}(\lam^{(r-1)}_1 \varpi_1 + \cdots + 
\lam^{(r-1)}_{\ell_{r-1}} \varpi_{\ell_{r-1}})  \right) 
\otimes L_{\mf{s}}( \lam_r \varpi_1),& 
\end{align*}
with multiplicity 
$\mu=\mu(\lam^{(1)}_1,\cdots,\lam^{(1)}_{\ell_1}; \ldots ; 
\lam^{(r-1)}_1,\cdots,\lam^{(r-1)}_{\ell_{r-1}}
; \lam_r)$. 
When the multiplicity $\mu$ is one, we do not indicate it. 
As a rule, we first terms give the highest weights corresponding to 
$\g^\natural$ in the same order as in the 
Column~\ref{col-IV}; 
the last number corresponds 
to the height weight of $\mf{s}\cong \mf{sl}_2$. 
We use the Bourbaki numbering for the simple roots of 
the simple factors of 
$\g^\natural$; 
in particular, for $G_2$, the first one is the short root. 

When $\g^\natural$ has a 
nontrivial center, we indicate in parenthesis 
the eigenvalue of $x \in \g_0^\natural \setminus\{0\}$ as fixed 
in Column \ref{col-V}
acting on the corresponding simple module.  \\

\item 
\label{col-VIII} gives the conformal weights 
of the strong generators of $\W^k(\g,f)$. 
The weights can be read off from the weights 
of $\mf{s}$-modules (it is $1+i/2$ if $i$ is the last value of 
each module). 
The multiplicity of each weight can be deduced 
from the decomposition of $\g$ 
into $\mf{s}$-modules (that we do not indicate 
here).
\\

\item 
\label{col-IX} gives the multiplicity 
of the representation $\C \times \mf{s}$ 
in the $\mf{k}$-module $\g$. 
This information is redundant 
with Column \ref{col-VII}; 
it always corresponds to the module 
$[0,\ldots,0;2]$ of the Column \ref{col-VII}. 
This just eases checking  Condition~\ref{crit-iii} 
in \Cref{criterion}.\\

\item 
\label{col-X} gives the possible values of $k+h^\vee$, 
for $k$ solution of the equation 
$$c(k,f)=c(k^\natural)+r,$$ 
where $c(k,f)$ is the central charge of $\W^k(\g,f)$ 
as in Column \ref{col-VI}, 
$c(k^\natural)$
is the central charge of the affine vertex algebra 
$\bigotimes_{i=1}^s V^{k_i^\natural}(\g_i^\natural)$ 
corresponding to the semisimple part of $\g^\natural$, 
and $r$ is the dimension of the center of $\g^\natural$. 

Where $r>0$, we also specify the value of $k+h^\vee$ 
for $k$ such that $k_0^\natural=0$ 
in the case that the equality $c(k,f)=c(k^\natural)$ 
holds for such a value of $k$. 
In this event, we 
indicate it at the end, after a semicolon. 
Note that we write only the rational solutions  
(for instance, for $\g=E_6$ 
and $G.f=D_5(a_1)$, 
the equation $c(k,f)=1$ has purely complex 
solutions that we do not indicate.) 

The admissible values of $k$ appear in bold in the tables. 
\\

\item 
\label{col-XI} summarizes our conclusions as follows:
\begin{itemize}
\item we write $\sf{CL}$ 
when the level $k$ is collapsing, 
in particular if 
all conditions of \Cref{criterion} are 
satisfied, 
that is, when the multiplicity given in Column \ref{col-IX} is 
one, for each $i$ such that $h_i >1$, 
$h_{\lambda_i}$ is different from $h_i$ 
(the computations are made 
using the data of Columns \ref{col-VII} and \ref{col-VIII} 
for the values of $k+h^\vee$ as in Column~\ref{col-X}). 

\item if the multiplicity given in Column \ref{col-IX} is one, 
but the condition \ref{crit-iii} of 
\Cref{criterion} is not verified, 
then we have a conformal embedding $\tilde V_k(\g) \hookrightarrow \W_k(\g,f)$ 
and we write $\sf{CE}$. 
It may happen that $k$ is collapsing, $\sf{CE}$ 
indicates that it we do not know whether $L_k(\g)\cong \W_k(\g,f)$. 

\item if the multiplicity given in Column \ref{col-IX} is $>1$, 
then our criterion does not apply, 
and we write $\sf{DNA}$ (do not apply) if we are not able conclude 
(by other methods) that the 
embedding $\tilde V_k(\g)\hookrightarrow \W_k(\g,f)$ is conformal. 

\item for some cases, even if the conditions 
of \Cref{criterion} are not all verified, we can conclude 
using other arguments (for instance, if $k$ is admissible, 
or using explicit computations of OPEs, using \Cref{prop:ext}, etc.) 
that $k$ is collapsing or that $\W_k(\g,f)$ is a finite extension 
of $L_k(\g)$. 
In the first case, we write $\sf{CL}$ to indicate that $k$ is 
collapsing; in the second case, we write $\sf{FE}$ 
(in this case, we also mean that it is not collapsing). 
Both cases of course give conformal embeddings.  
For instance, for $\g=E_6$, and $G.f=A_2$, 
we know from \cite{AEM} that $\W_{-12+13/3}(E_6,A_2)$ 
is a finite extension of $L_{4/3-3}(A_2) \otimes L_{4/3-3}(A_2)$. 
For $\g=G_2$, and $G.f=G_2(a_1)$, 
we know that $\W_{-2}(G_2,f)\cong \C$ using OPEs, 
hence we write $\sf{CL}$ 
although the condition \ref{crit-iii} of \Cref{criterion} 
fails.

\item when we know by other arguments (for instance if 
$k$ is admissible) that 
$k$ cannot be collapsing, we write 
$\sf{CE|NCL}$ when the embedding is still conformal, 
and merely $\sf{NCL}$ when we do not know whether it 
is conformal.  
In general, this happens in the following situation: 
$\g^\natural$ has a nontrivial 
center, $k$ is admissible so that $\W_k(\g,f)$ is quasi-lisse  
and so it cannot be an extension   
of an Heisenberg vertex algebra. 
\item Line 1 of Tables 2-14 corresponds to the minimal orbit, in which case  there are always  three values of $k$ in column X.  Two of then  are collapsing and the remaining one gives a finite extension. These conclusions  are also proven in 
\cite[Theorem 3.3]{AKMPP18}, \cite[Theorem 6.8]{AKMPP17}.

\end{itemize}

\end{enumerate}

%
%

\newpage

\pagestyle{empty}
\begin{landscape}
{\tiny
 \\[0.25em] 
\captionof{table}{\footnotesize{Data for the $\mf{sl}_2$-triples 
and some collapsing levels in $E_8$ (continued)}} 
\label{Tab:Data-E8-6}}
\end{landscape}

\end{document}